\theoremstyle{plain}
\newtheorem{theorem}{Theorem}[section]
\newtheorem{lemma}[theorem]{Lemma}
\newtheorem{proposition}[theorem]{Proposition}
\newtheorem{question}[theorem]{Question}
\theoremstyle{definition}
\newtheorem{definition}[theorem]{Definition}
\newtheorem{remark}[theorem]{Remark}
\newtheorem{example}[theorem]{Example}
\newcommand{\ZZ}{\mathbb{Z}}			%
\newcommand{\NN}{\mathbb{N}}			%
\newcommand{\RR}{\mathbb{R}}			%
\newcommand{\symb}[1]{\mathtt{#1}}		%
\newcommand{\isdef}{\coloneqq}			%
\DeclarePairedDelimiter\norm{\lVert}{\rVert}	%
\newcommand{\vast}{\bBigg@{4}}
\newcommand{\Vast}{\bBigg@{5}}
\newcommand{\Var}{%
	\operatorname{\mathrm{Var}}%
}	
\newcommand{\Prob}{%
	\operatorname{\mathrm{Prob}}%
}
\newcommand{\ag}{\mathcal{A}}
\newcommand{\cT}{\mathcal{T}}           %
\newcommand{\dd}{\mathrm{d}}			%
\newcommand{\Sym}{{\rm Sym}}
\newcommand{\Map}{{\rm Map}}
\newcommand{\define}[1]{\emph{#1}}
\newcommand{\xConfig}[1]{%
	\begin{tikzpicture}[
		baseline=-\the\dimexpr\fontdimen22\textfont2\relax,ampersand replacement=\&]
		\matrix[
		matrix of math nodes,
		nodes={
			minimum size=1.4ex,text width=1.4ex,
			text height=1.4ex,inner sep=3pt,draw={gray!20},anchor=center
		}, row sep=1pt,column sep=1pt
		] (config) {#1};
		\node[draw,rectangle,help lines,gray!50, dashed,fit=(config),inner sep=-1pt] {};
	\end{tikzpicture}
}
\newtheorem{maintheorem}{Theorem}
\title{%
	The Lanford--Ruelle theorem for actions of sofic groups
}
\author{Sebasti\'an Barbieri and Tom Meyerovitch}
\newcommand{\Addresses}{{
		\bigskip

		\hskip-\parindent   S.~Barbieri, \textsc{Departamento de Matem\'{a}tica y ciencia de la computaci\'{o}n, Universidad de Santiago de Chile, Santiago. Chile.}\par\nopagebreak
		\textit{E-mail address}: \texttt{sebastian.barbieri@usach.cl}
		
		\medskip
		
		\hskip-\parindent   T.~Meyerovitch, \textsc{Department of Mathematics, Ben-Gurion University of the Negev, Be'er Sheva, Israel}\par\nopagebreak
		\textit{E-mail address}: \texttt{mtom@bgu.ac.il}
}}
\date{}
\begin{document}
	
	\maketitle
	
	\begin{abstract}
		Let $\Gamma$ be a sofic group, $\Sigma$ be a sofic approximation sequence of $\Gamma$ and $X$ be a $\Gamma$-subshift with nonnegative sofic topological entropy with respect to $\Sigma$. Further assume that $X$ is a shift of finite type, or more generally, that $X$ satisfies the topological Markov property. We show that for any sufficiently regular potential $f \colon X \to \RR$, any translation-invariant Borel probability measure on $X$ which maximizes the measure-theoretic sofic pressure of $f$ with respect to $\Sigma$, is a Gibbs state with respect to $f$. This extends a classical theorem of Lanford and Ruelle, as well as previous generalizations of Moulin Ollagnier, Pinchon, Tempelman and others, to the case where the group is sofic. 
		
		As applications of our main result we present a criterion for uniqueness of an equilibrium measure, as well as sufficient conditions for having that the equilibrium states do not depend upon the chosen sofic approximation sequence. We also prove that for any group-shift over a sofic group, the Haar measure is the unique measure of maximal sofic entropy for every sofic approximation sequence, as long as the homoclinic group is dense.
		
		On the expository side, we present a short proof of Chung's variational principle for sofic topological pressure. 
		
		\medskip
		
		\noindent
		\emph{Keywords: sofic groups, equilibrium states, Gibbs measures, sofic entropy, sofic pressure.}
		
		\smallskip
		
		\noindent
		\emph{MSC2020:}
		37B10,	%
		37D35,	%
		82B20. 	%
	\end{abstract}

	\section{Introduction}
	The classical Lanford--Ruelle theorem~\cite{LanfordRuelle1969} asserts that given a $\ZZ^d$-shift of finite type $X$ and an absolutely-summable, translation-invariant interaction $\Phi$ on $X$, any translation-invariant Borel probability measure which maximizes the \define{measure-theoretic pressure} with respect to $\Phi$ (that is, any \define{equilibrium state} for the interaction $\Phi$) is a \define{Gibbs state} with respect to $\Phi$. 
	
	Various aspects of the Lanford--Ruelle theorem have been generalized by a number of authors. It has been proven that any measure of maximal entropy for a $\ZZ^d$-shift of finite type is invariant with respect to the asymptotic relation~\cite{BurtonStief1994}. Also, the Lanford--Ruelle theorem has been extended to the context where the acting group is any countable amenable group, not only $\ZZ^d$~\cite{OllagnierPinchon1981,tempelman1984specific}, and even more generally, to subshifts which satisfy a condition called the \define{topological Markov property} and with respect to any random \define{environment}~\cite{BGMT_2020}. For an overview of these generalizations, we refer the reader to the introduction of~\cite{BGMT_2020}.

	A fair amount of work related to Gibbs measures over finite graphs and their limits has been carried out by the communities of mathematical physics and probability. In particular, Gibbs measures on tree-like graphs such as $d$-regular expanders have been considered in~\cite{Sly2010,AndreaMosselSly2012}.
	The emergence of sofic entropy~\cite{Bowen2010_2} and subsequent introduction of sofic pressure~\cite{chung_2013} provides a natural framework to study equilibrium measures and their relation with Gibbs states in the context of actions of sofic groups. Let us mention that this class of groups includes all countable amenable and residually finite groups, in particular, it includes all free groups. 
	
	In this paper we fully generalize the Lanford--Ruelle theorem to the context where the lattice is an arbitrary countable sofic group $\Gamma$ and the space of configurations $X$ has hard constraints that generalize the condition of being of finite type. More precisely, our theorem works for any subshift $X \subset \ag^{\Gamma}$ with nonnegative sofic topological entropy which satisfies the topological Markov property.

	\begin{maintheorem}{\label{thm:LR_norm_summable_interaction}}
		Let $\Sigma$ be a sofic approximation sequence for $\Gamma$, $X$ be a subshift that satisfies the topological Markov property such that $h_{\Sigma}(\Gamma \curvearrowright X)\geq 0$, $\Phi$ an absolutely-summable interaction on $X$ and $\mu$ an equilibrium measure on $X$ for $\Phi$ with respect to $\Sigma$. Then $\mu$ is Gibbs with respect to $\Phi$.
	\end{maintheorem}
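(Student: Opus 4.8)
The plan is to reduce the Gibbs property to a region-by-region optimality statement and then to verify that optimality by comparing microstates on the sofic approximation $\Sigma$ against a locally resampled competitor, invoking Chung's variational principle. Throughout, let $f$ denote the single-site potential determined by $\Phi$, so that equilibrium measures for $\Phi$ are exactly the $\Gamma$-invariant measures maximising $h_\Sigma(\nu) + \int f \, \dd\nu$. As a first reduction, I would record a measure-theoretic reformulation of the Gibbs condition. For finite $F \subset \Gamma$, the topological Markov property guarantees that the patterns admissible on $F$ depend only on the configuration along a finite boundary, so the \emph{Gibbs specification} $\gamma^\Phi_F(\cdot \mid x_{F^\complement})$ — the probability vector on $F$-patterns proportional to $\exp(-(\Phi\text{-energy}))$ and supported on patterns compatible with $x_{F^\complement}$ in $X$ — is well defined. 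A standard argument then shows that $\mu$ is Gibbs for $\Phi$ if and only if, for every finite $F$, the conditional distribution of $\mu$ on $F$ given the outside coincides $\mu$-almost everywhere with $\gamma^\Phi_F$. It therefore suffices to prove this specification identity for an arbitrary fixed finite $F$.

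Second, I would build an invariant competitor and invoke equilibrium. Fix a sofic approximation $\sigma_n \colon \Gamma \to \Sym(V_n)$ and a microstate for $\mu$, namely a labelling of $V_n$ by $\ag$ whose empirical pattern distribution is weak-$*$ close to $\mu$; the hypothesis $h_\Sigma(\Gamma \curvearrowright X) \geq 0$ ensures that these exist in exponential abundance rather than the count collapsing. Along a family of pairwise-disjoint translates of $F$ inside $V_n$ of density tending to one — those whose neighbourhoods are faithfully modelled by $\sigma_n$ — I would \emph{resample} the labels according to $\gamma^\Phi_F$ conditioned on the surrounding labels. The topological Markov property is precisely what guarantees that each resampled block re-glues to a valid $X$-microstate, and since the blocks are disjoint and correctly modelled, the weighted microstate count factorises over them up to subexponential error. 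The finite-dimensional Gibbs variational inequality — that among all distributions on $F$-patterns with a given boundary, $\gamma^\Phi_F$ uniquely maximises local entropy plus local $f$-energy — then shows that this resampling does not decrease the per-vertex value of $h_\Sigma(\cdot) + \int f \, \dd(\cdot)$, strictly increasing it unless $\mu$ already agrees with $\gamma^\Phi_F$ on a positive-measure set of boundary conditions. Passing to a weak-$*$ limit produces a $\Gamma$-invariant competitor $\nu$ with $h_\Sigma(\nu) + \int f \, \dd\nu \geq h_\Sigma(\mu) + \int f \, \dd\mu$. Since $\mu$ is an equilibrium measure the reverse inequality holds, so equality is forced; hence $\mu$ matches $\gamma^\Phi_F$, and as $F$ was arbitrary, $\mu$ is Gibbs.

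I expect the technical heart, and the main obstacle, to lie in the second step: making precise and controlling the effect of a \emph{simultaneous} density-one resampling on the sofic models. In the amenable setting Følner sets localise entropy and such additivity is routine, whereas here one must work externally on the finite models $V_n$ and argue that resampling a density-one collection of disjoint, correctly-modelled copies of $F$ transforms the family of microstates for $\mu$ into microstates for $\nu$ through an almost weight-preserving correspondence, with the weighting factor contributing exactly the change in local $f$-energy. Two points are delicate: first, ensuring that the weak-$*$ limit $\nu$ genuinely carries the claimed entropy lower bound, which requires the microstate count to factorise over the resampled blocks up to subexponential error — this is where both disjointness and the faithfulness of $\sigma_n$ on the relevant neighbourhoods are essential; and second, sharpening the non-decrease into the exact specification identity, for which the strict concavity in the finite-dimensional Gibbs inequality must be leveraged to upgrade an averaged equality into an almost-everywhere one as $F$ and the boundary conditions vary.
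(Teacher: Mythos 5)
Your overall strategy---reduce to a local specification identity and then show that any deviation from the Gibbs kernel could be exploited to build a competitor with strictly larger pressure, contradicting equilibrium---is the right spirit, but the step you yourself flag as the ``technical heart'' is exactly where the proposal breaks down, and the paper's proof is structured specifically to avoid it. Two concrete problems. First, the family of ``pairwise-disjoint translates of $F$ inside $V_n$ of density tending to one'' does not exist as stated: pairwise-disjoint copies of a $k$-element set select at most $|V_n|/k$ base points, and whether their \emph{union} can be made to cover a $(1-\varepsilon)$-fraction of $V_n$ is a nontrivial quasi-tiling statement that is not automatic for sofic approximations of non-amenable groups. Second, and more seriously, the claim that ``the weighted microstate count factorises over the resampled blocks up to subexponential error'' is precisely the kind of localisation that sofic entropy does not admit: counting microstates is a global constraint (the empirical distribution must land near a single invariant measure $\nu^\star$), and resampling disjoint blocks conditionally on their boundaries neither guarantees that the outputs concentrate on microstates of one measure $\nu$ nor gives control of the fibres of the resampling map, so the lower bound $h_\Sigma(\nu)+\int f\,\dd\nu \geq h_\Sigma(\mu)+\int f\,\dd\mu$ plus the local entropy gain is not established. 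A further obstruction: for a general absolutely-summable (infinite-range) $\Phi$ the kernel $\gamma^\Phi_F$ depends on the entire exterior configuration, so it cannot even be implemented as a local operation on the finite models $V_n$.

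The paper circumvents all three points by making a much more modest perturbation: instead of resampling whole blocks according to the full conditional Gibbs kernel, it fixes a single interchangeable pair of patterns $(x|_{F_2},y|_{F_2})$ and compares the counts of microstates in which the empirical ratio of occurrences of $y|_{F_2}$ to $x|_{F_2}$ equals $r_1$ versus $r_2$ (Lemma 4.3). The change of count is then an explicit binomial coefficient (the fibre of the equivariant ``replace every $y|_{F_2}$ by $x|_{F_2}$'' map $\pi$), which is well defined only under a trivial-overlap hypothesis; that hypothesis is then bought for free by passing to the product with an i.i.d.\ Bernoulli system (Lemmas 4.8--4.9), whose effect on sofic entropy is controlled by Bowen's product formula. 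Optimising the resulting one-parameter function $\hat p(r,\Psi_f(x,y))=H(r^\ast)+r^\ast\Psi_f(x,y)$ at $r=\exp(\Psi_f(x,y))$ yields the Radon--Nikod\'ym cocycle identity directly, with no need to localise entropy. Finally, the passage from locally constant potentials to absolutely-summable interactions is not done on the finite models at all, but by a convex-analysis argument on tangent functionals of the pressure (Proposition 5.1 and Theorem 5.4), combined with the TMP only at the very end to upgrade ``\'etale Gibbs'' to ``Gibbs''. If you want to salvage your approach you would essentially need to prove a block-factorisation property for sofic microstate counts, which is an open-ended problem in itself; I would instead recommend reworking the argument around a one-pattern-at-a-time exchange.
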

	
	Some authors have already explored results that are related to the Lanford--Ruelle theorem beyond amenable groups. For instance, Grigorchuk and Stepin~\cite{GrigorchukStepin1984} provide a sketch of proof for the existence of Gibbs measures on the full $\Gamma$-shift on residually finite groups for locally-constant potentials. Many years later Alpeev~\cite[Theorem $1$]{Alpeev2016}, showed that for any locally-constant potential on the full $\Gamma$-shift (that is, no ``hard constraints'' or ``forbidden patterns'') of a sofic group $\Gamma$, there exist Gibbs measures and that furthermore, under certain conditions ensuring uniqueness, the value of a variant of sofic topological pressure does not depend upon the sofic approximation sequence. Recent results of Shriver~\cite{shriver2020free} essentially show that the conclusion of the Lanford--Ruelle theorem for the full $\Gamma$-shift holds with respect to finite-range interactions. Strictly speaking, Shriver's paper considers only a certain class of finitely generated sofic groups, although this restriction is presumably just for simplicity of exposition, where the generalization beyond full-shifts and beyond bounded range interactions does require further arguments. We should mention that Shriver's paper obtains additional related results related to invariance with respect to the Glauber dynamics, which imply Gibbsianness and are outside the scope of this work. Related objects have also been considered in~\cite{AustinPodder2018}, although the relation to sofic entropy in this work is somewhat implicit.
	
	We deduce~\Cref{thm:LR_norm_summable_interaction} and a close variant of it (\Cref{thm:LR_SV}) as instances of the following slightly more general result:
	
	\begin{maintheorem}{\label{thm:LR_etale_good}}
		Let $\Sigma$ be a sofic approximation sequence for $\Gamma$, $X$ be a subshift such that $h_{\Sigma}(\Gamma \curvearrowright X)\geq 0$, and let $f \in C(X)$ be a  $\rho_{\mathcal{T}^0(X)}$-limit of locally constant functions. Then any equilibrium measure $\mu$ on $X$ for $f$ with respect to $\Sigma$ is \'etale Gibbs with respect to $f$. 
	\end{maintheorem}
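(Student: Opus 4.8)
The plan is to verify the \'etale Gibbs property as a quasi-invariance statement: that $\mu$ is quasi-invariant under the homoclinic (asymptotic) groupoid $\mathcal{G}$ of $X$, with Radon--Nikodym derivatives given by $e^{c_f}$, where the Gibbs cocycle is
\begin{equation*}
c_f(x,y) \;=\; \sum_{\gamma \in \Gamma}\big(f(\gamma\cdot x) - f(\gamma\cdot y)\big), \qquad (x,y)\in\mathcal{G}.
\end{equation*}
The hypothesis that $f$ is a $\rho_{\mathcal{T}^0(X)}$-limit of locally constant functions is exactly what makes this sum converge and defines a continuous cocycle depending continuously on $f$; this is the sofic surrogate for summable variation. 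The topological Markov property guarantees that the local bijections generating $\mathcal{G}$ really do map $X$ into $X$, so that $\mathcal{G}$ is rich enough to detect Gibbsianness. I would carry out the main estimate for a locally constant potential and let the locally constant approximants to $f$ enter inside the finite-model counting below, using $\rho_{\mathcal{T}^0(X)}$-convergence to control the approximation errors uniformly and to pass the resulting cocycle identity to the limit.

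The core of the argument is a perturbation of $\mu$ along a single element of $\mathcal{G}$. Fix a bisection $\tau \colon U \to V$, that is, a clopen partial homeomorphism of $X$ which alters only finitely many coordinates while preserving membership in $X$. I would build a competitor measure $\mu^\tau$ by transporting the mass of $\mu$ through $\tau$, reweighted by $e^{c_f}$, and symmetrising over $\Gamma$ so as to restore shift-invariance and make Chung's variational principle applicable. The target is an inequality
\begin{equation*}
\Big(h_\Sigma(\mu^\tau)+\textstyle\int f\,\dd\mu^\tau\Big)-\Big(h_\Sigma(\mu)+\int f\,\dd\mu\Big)\;\geq\; D(\mu,\tau),
\end{equation*}
in which $D(\mu,\tau)\geq 0$ is a relative-entropy term, arising from the Gibbs inequality, that vanishes if and only if $\mu$ already obeys the cocycle identity across $\tau$. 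Since $\mu$ maximises the free energy on the left, the difference is nonpositive, forcing $D(\mu,\tau)=0$ and hence the Radon--Nikodym relation for $\tau$. Letting $\tau$ range over a generating family of bisections yields \'etale Gibbsianness.

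The step I expect to be the main obstacle is the entropy part of this inequality: controlling how $h_\Sigma$ changes under the local modification $\tau$. In the amenable setting one would invoke subadditivity of entropy along a F\o lner sequence, but for sofic entropy neither F\o lner sets nor subadditivity are available. Instead I would transfer the comparison to the finite models. For each $n$, the good models of $\mu$ over the sofic approximation $\sigma_n \colon \Gamma \to \Sym(V_n)$ are assignments $V_n \to \ag$ with empirical distribution near $\mu$; since $\tau$ is supported on a finite pattern, applying it simultaneously at the $\sigma_n$-translates of its support recodes each good model into a new one, altering coordinates over a positive-density subset of $V_n$. Counting models before and after this recoding and comparing the energies $\sum_{v\in V_n} f$ (here evaluated through a locally constant approximant) should produce the finite-level inequality, with the cocycle $c_f$ appearing as the exponential rate at which the model counts are reweighted. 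Taking $n\to\infty$ and invoking affineness and upper semicontinuity of $h_\Sigma$, together with the hypothesis $h_\Sigma(\Gamma\curvearrowright X)\geq 0$ which keeps all entropies finite and the variational problem nondegenerate, should close the estimate. The delicate point is that the recoding must be applied consistently across the $V_n$-coordinates without disturbing weak$^*$-convergence to $\mu$, so that the entropy change is accounted for exactly by $D(\mu,\tau)$ and no spurious loss is incurred.
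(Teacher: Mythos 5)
Your overall instinct --- work in the finite models, recode microstates by applying the interchange at all $\sigma_n$-translates of its support, and read off the Gibbs cocycle as the exponential rate of the reweighted count --- is close in spirit to the paper's \Cref{lem:opt_ratio}. But the architecture you wrap around it does not survive outside the amenable world. ``Symmetrising over $\Gamma$ so as to restore shift-invariance'' has no meaning without F\o lner averaging, and, more seriously, the ``affineness \dots of $h_\Sigma$'' you invoke to close the free-energy comparison is simply false for non-amenable sofic groups: the paper explicitly recalls that one can have $h_\Sigma(\Gamma \curvearrowright X,\mu_1), h_\Sigma(\Gamma \curvearrowright X,\mu_2)\ge 0$ while $h_\Sigma(\Gamma \curvearrowright X,\tfrac12\mu_1+\tfrac12\mu_2)=-\infty$. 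For this reason the paper never constructs a competitor invariant measure $\mu^\tau$ at all. Instead, for the fixed equilibrium measure $\mu$ it stratifies the microstate space by the empirical ratio $r$ of the two interchangeable patterns and computes exactly how the weighted count depends on $r$, obtaining the difference $t\bigl(\hat p(r_1,\Psi_f(x,y))-\hat p(r_2,\Psi_f(x,y))\bigr)$ with $\hat p(r,C)=H(r^*)+r^*C$, $r^*=r/(1+r)$; since every stratum still sits inside a neighborhood of $X$, the topological pressure dominates each of them, and the equilibrium property forces $r=\exp(\Psi_f(x,y))$, the unique maximizer of $\hat p(\cdot,C)$. Note also that \Cref{thm:LR_etale_good} does not assume the topological Markov property: the generators $I_{p,q}$ of $\cT^0(X)$ preserve $X$ by the very definition of interchangeability, and the TMP only enters later (\Cref{prop:GibbsTMP}) to upgrade ``\'etale Gibbs'' to ``Gibbs''.

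Two further steps of your plan contain genuine gaps. First, ``applying $\tau$ simultaneously at the $\sigma_n$-translates of its support'' is not well defined when the replaced pattern can overlap its own translates, and the counting of preimages $K_{w'}(\delta,r)$ (which produces the entropy term $H(r^*)$) breaks down without overlap control; the paper spends \Cref{lem:pattern_exchange}, \Cref{lem:noself_overlap_implies_non_overlap_within}, \Cref{lem:nooverlap_iid} and \Cref{lem:cheating} on exactly this, ultimately importing a non-self-overlapping coordinate by taking a product with a Bernoulli full shift. Second, passing from locally constant approximants $f_n$ to $f$ is not a matter of ``passing the resulting cocycle identity to the limit'': an equilibrium measure for $f$ need not be a weak-$*$ limit of equilibrium measures for the $f_n$, so the Gibbs identity established for the approximants does not transfer to $\mu$ by continuity alone. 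The paper resolves this with a separate convex-analysis argument (\Cref{prop:dense_tangent}), characterizing \'etale Gibbs measures by a family of linear inequalities (\Cref{lem:etale_gibbs_reformulation}) and showing, via one-sided derivatives of $t\mapsto \Pi(f+tv')$, that these inequalities propagate from a dense set of potentials to every tangent functional at every potential in the closure. Without an argument of this kind --- or a direct, and considerably messier, control of $S_i(f,w)-S_i(f,\pi_i(w))$ for non-local $f$ inside the model counting --- the reduction to locally constant potentials is incomplete.
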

	
	We remark that~\Cref{thm:LR_etale_good} does not require any structural assumption on the subshift but provides the conclusion of the measure being ``étale Gibbs'' instead of just Gibbs. Let us explain how this statement is more general than~\Cref{thm:LR_norm_summable_interaction} and \Cref{thm:LR_SV}:%
	
	\begin{enumerate}
		\item As in \cite{Meyerovitch_2013}, the statement of~\Cref{thm:LR_etale_good} applies meaningfully to shifts even without the topological Markov property. The condition of being a Gibbs measure on $X$ with respect to a suitable map $f\colon X \to \RR$ can be stated in terms of the Radon-Nikod\'ym cocycle of the measure with respect to the relation $\cT(X)$ induced by the asymptotic pairs on $X$. Following ideas from~\cite{Meyerovitch_2013}, we define a subrelation $\cT^{0}(X)$ which we call the \define{\'etale asymptotic relation} (called the topological Gibbs relation in~\cite{Meyerovitch_2013}) and define \define{\'etale Gibbs measures} as those which satisfy the Gibbs condition with respect to this subrelation. It turns out that both notions of ``Gibbs'' coincide precisely when the subshift satisfies the topological Markov property (\Cref{prop:GibbsTMP}). We show  that the conclusion of the Lanford--Ruelle theorem holds for any subshift as long as we replace the condition of being Gibbs by being \define{\'etale Gibbs}.
		\item The statement of~\Cref{thm:LR_etale_good} applies to a larger class of functions. More precisely, a sequence $(f_n)_{n \in \NN}$ of locally constant functions converges in the $\rho_{\cT^{0}(X)}$-metric to $f \in C(X)$ if it converges uniformly, and furthermore, the cocycles induced by the functions $f_n$ on the asymptotic Gibbs relation $\cT^{0}(X)$ converge to the cocycle induced by $f$ on $\cT^{0}(X)$. We show that functions generated by absolutely-summable interactions (\Cref{thm:LR_norm_summable_interaction}) and functions which have summable-variation with respect to a filtration of $\Gamma$ (\Cref{thm:LR_SV}) satisfy this property.
	\end{enumerate}

	An interesting observation related to our main result is that, while for non-amenable sofic groups the notion of equilibrium measure depends upon a sofic approximation sequence, Gibbs measures do not. Therefore, if by some means one can show that a subshift $X \subset \ag^{\Gamma}$ with the topological Markov property admits a unique Gibbs measure $\mu$, then necessarily for every sofic approximation sequence $\Sigma$ such that $h_{\Sigma}(\Gamma \curvearrowright X)\geq 0$, we have that $\mu$ is translation-invariant and is the unique such measure which maximizes the measure-theoretic sofic entropy $h_{\Sigma}(\Gamma \curvearrowright X) = h_{\Sigma}(\Gamma \curvearrowright X,\mu)$. We prove this in~\Cref{thm:uniquegibbsmeasure}.
	
	In~\cite[Question 5.4]{chung_2013} it was asked whether for every single-site potential there is a unique equilibrium state on the full-shift over a countable sofic group. This question was answered positively in~\cite[Example 7]{Bowen_2020} and in \cite[Corollary 3.6]{Seward2019b}. Our results also show that the answer is positive but through the fact that there exists a unique translation-invariant Gibbs measure in that case.

	Establishing a property of equilibrium measures that is independent of the sofic approximation sequence is particularly relevant given a recent result of Airey, Bowen and Lin~\cite{AirBowLin2019} which produces a strongly irreducible subshift of finite type (and thus with the topological Markov property) on a sofic group $\Gamma$ for which two different sofic approximation sequences $\Sigma_1$ and $\Sigma_2$ of $\Gamma$ yield two distinct positive values of sofic topological entropy. If one were able to adapt their construction and produce a system with a unique Gibbs measure, then one would actually be able to show that a single measure can produce two distinct positive values of measure-theoretic sofic entropy with respect to distinct sofic approximation sequences, a problem which is still open~\cite[Remark 2]{AirBowLin2019}.
	
	Another application of our result concerns the existence of Gibbs measures. For any sofic group, any subshift of finite type which has nonnegative sofic topological entropy with respect to some sofic approximation sequence admits at least one translation-invariant equilibrium state with respect to any translation-invariant, absolutely-summable interaction (\Cref{thm:existenceGibbsmeasure}), in particular, our result shows that these equilibrium states are Gibbsian. The previous statement generalizes the aforementioned results of Grigorchuk and Stepin~\cite{GrigorchukStepin1984} and Alpeev~\cite[Theorem $1$]{Alpeev2016}. In fact, in our result the existence of translation-invariant Gibbs states holds for any subshift with the topological Markov property, which is more general than the finite type condition. We remark that the assumption of having finite sofic topological entropy holds trivially in the case where the acting group is amenable, but in the non-amenable case,  the assumption cannot be completely removed. In particular, for any non-amenable group there exist subshifts of finite type that admit no invariant measures, in which case the sofic topological entropy is equal to $-\infty$ for any sofic approximation sequence.

	A further application of our theorem can be given whenever a subshift carries an additional algebraic structure. More precisely, a \define{group shift} is a subshift whose alphabet is a finite group and which carries a group structure induced by the alphabet. These algebraic subshifts always admit the Haar measure as a translation-invariant Borel probability measure. We are able to show that for any group shift defined on a sofic group for which its homoclinic group is dense (the group of elements in the subshift which are asymptotic to the identity) then the Haar measure is the unique measure of maximal sofic entropy (\Cref{thm:groupshifts}) with respect to every sofic approximation sequence for $\Gamma$.
	
	\textbf{Acknowledgements}: The authors wish to thank Raimundo Brice\~no for  drawing our attention to Shriver's work, Hanfeng Li for very helpful comments on a first version of this article and Andrei Alpeev for suggesting reference~\cite{GrigorchukStepin1984}. We are also grateful to an anonymous referee for several helpful comments. S. Barbieri was supported by the FONDECYT grant 11200037 and T. Meyerovitch was supported by ISF grant 1058/18.

	\subsection{Organization of the paper}
	In~\Cref{sec:preliminaries} we provide all the necessary definitions and prove a few elementary results which relate the asymptotic relation with the topological Markov property.
	
	In~\Cref{sec:soficgroupspressure} we recall the definition of sofic groups and sofic pressure, and provide proofs of a few elementary structural results. We also provide a short self-contained proof of Chung's variational principle (\Cref{thm:variational}). 
	
	The proof of our main theorem in the case where the function is locally constant is given in~\Cref{sec:prooflocal}.
	The heart of the argument is an estimate for a ``finite approximation of the pressure function on finite models'', conveyed by \Cref{lem:opt_ratio}. To handle a certain technical condition regarding periodicity, we use a trick which involves taking a direct product of the original subshift with a full $\Gamma$-shift (\Cref{lem:cheating}).

	In~\Cref{sec:convexgeneralization} we extend the main result of the previous section to functions which are not necessarily locally constant in~\Cref{thm:LR_etale_good_bis} ($=$ \Cref{thm:LR_etale_good}). Our argument uses tools from convex analysis and a characterization of étale Gibbs measures in terms of tangent functionals for the sofic pressure function. We also discuss the space of absolutely-summable interactions (\Cref{subsec:ASinteractions}) and the space of maps with absolutely-summable variation (\Cref{subsec:FSvariation}) and provide the proofs for~\Cref{thm:LR_norm_summable_interaction_bis} ($=$ \Cref{thm:LR_norm_summable_interaction}) and~\Cref{thm:LR_SV}.
	In~\Cref{sec:applications} we present in more detail the applications of our main theorem discussed above. In \cref{sec:beyond_sofic} we briefly discuss hypothetical extensions beyond sofic groups.
	\section{Preliminaries}\label{sec:preliminaries}

	\subsection{Spin systems and shift spaces}
	
	Let $\ag$ be a finite set, which we think of as ``the allowable spins'' or ``the alphabet'', and let $\Gamma$ be a countable group, which will play the both the role of ``the lattice'' and the ``spatial  symmetries''. The set $\ag^\Gamma = \{ x\colon \Gamma \to \ag\}$ is endowed with the product topology, where $\ag$ is considered as a discrete topological space. Elements of $\ag^{\Gamma}$ are called \define{configurations}. Given a configuration $x \in \ag^{\Gamma}$ we shall denote its value at $g \in \Gamma$ either by $x(g)$ or $x_g$.
	
	A \define{pattern} with support $F\Subset \Gamma$ is an element $p \in \ag^F$ (we use the notation $A \Subset B$ to denote that $A$ is a finite subset of $B$). We denote by $[p] = \{ x \in \ag^{\Gamma} : x(g)=p(g) \mbox{ for every } g \in F   \}$ the cylinder generated by a pattern $p$.  A \define{space of configurations} is a compact subset $X\subset \ag^{\Gamma}$.
	For a space of configurations $X$ and $F \Subset \Gamma$ the set of $X$-admissible  $F$-patterns is defined as:
	\[ P_F(X) \isdef \{ x|_F :~ x \in X\}.\]
	We denote the set of all $X$-admissible patterns by $P(X) \isdef \bigcup_{F \Subset \Gamma}P_F(X)$.
	
	The (left) \define{shift} action $\Gamma \curvearrowright \ag^{\Gamma}$ is given by 
	\[ gx(h) \isdef x(g^{-1}h) \qquad \mbox{ for every } g,h \in \Gamma 
	\mbox{ and } x \in \ag^\Gamma. \]
	Then $\Gamma \curvearrowright \ag^{\Gamma}$ is  a topological dynamical system, which is known as the \define{full $\Gamma$-shift} over $\ag$. %
	
	A subset $X \subset \ag^\Gamma$ is a \define{shift space or subshift} if and only if it is $\Gamma$-invariant and closed in the product topology. Equivalently, $X$ is a subshift if and only if there exists a (possibly infinite) set of patterns $\mathcal{F}$ such that 
	\[X=\left\{ x\in \ag^\Gamma : gx \notin [p], \mbox{ for every } g \in \Gamma, p \in  \mathcal{F} \right\}.\]
	
	If $X$ satisfies the equation above, we refer to $\mathcal{F}$ as a set of \define{forbidden patterns} or \define{hard constraints} of $X$. Note that the set $\mathcal{F}$ above is not uniquely determined by $X$. A \define{shift of finite type (SFT)} is a shift space $X$ that can be defined by a \define{finite} set of forbidden patterns.

	Given two (possibly infinite) subsets $F,F' \subset \Gamma$, $p \in \ag^{F}$ and $q \in \ag^{F'}$ such that $p|_{F \cap F'} = q|_{F \cap F'}$, we define their concatenation $p \vee q$ as the map $w\colon F \cup F' \to \ag$ whose restrictions to $F$ and $F'$ coincide with $p$ and $q$ respectively.
	
	\subsection{The (\'etale) asymptotic relation}\label{sec:etale}
	
	\begin{definition}
		We say that two patterns $p,q \in \ag^{F}$ are \define{interchangeable} in a subshift $X \subset A^{\Gamma}$ if for every $x \in A^{\Gamma}$ we have
		\[ x|_{\Gamma \setminus F}\vee p \in X \mbox{ if and only if } x|_{\Gamma \setminus F} \vee q \in X.  \]
		Where $x|_{\Gamma \setminus F}$ denotes the restriction of $x$ to $\Gamma \setminus F$. In words, $p$ and $q$ are interchangeable if occurrences of $p$ can be interchanged by $q$ on any configuration without introducing any forbidden pattern and vice-versa.

		We denote the collection of interchangeable pairs of patterns with support $F$ in $\ag^F \times \ag^F$ by $\mathcal{I}_F(X)$ and by $\mathcal{I}(X) = \bigcup_{F \Subset \Gamma}\mathcal{I}_F(X)$ the collection of interchangeable pairs of patterns.
	\end{definition}
	\begin{definition}
		For a finite set $F \Subset \Gamma$ let 
		\[\cT_F(X) \isdef \left\{ (x,x') \in X \times X : x|_{\Gamma \setminus F} = x'|_{\Gamma \setminus F} \right\}\]
		\[\cT(X)  \isdef \bigcup_{F \Subset \Gamma} \cT_F(X). \]
		Then $\cT(X)$ is a Borel equivalence relation on $X$ where each equivalence class is countable, which we call \define{the asymptotic relation on $X$}. Some authors, for instance in~\cite{BorMac_2020,Meyerovitch_2013,PetersenSchmidt1997}, call $\cT(X)$ the Gibbs relation on $X$. 
		We also define:
		\[\cT_F^0(X) \isdef \left\{ (x,x') \in \cT_F(X)  :  (x|_F ,x'|_F \right) \in \mathcal{I}_F(X) \}.\]
		\[\cT^0(X) \isdef \bigcup_{F \Subset \Gamma} \cT_F^0(X). \]
		The equivalence relation $\cT^0(X)$ can be equivalently described as the orbit equivalence relation of the countable group of
		homeomorphisms of $X$ generated by the maps $I_{p,q} \colon X \to X$ which interchange patterns $(p,q) \in \mathcal{I}_F(X)$ for some $F\Subset \Gamma$, that is 
		\begin{equation}
			I_{p,q}(x) \isdef \begin{cases}
				q \vee x|_{\Gamma \setminus F} & \mbox{ if } x|_F = p\\
				p \vee x|_{\Gamma \setminus F} & \mbox{ if } x|_F = q\\
				x & \mbox{ otherwise}.
			\end{cases}  
		\end{equation}
		We refer to $\cT^0(X)$ as the \define{\'etale asymptotic relation}. We remark that in~\cite{Meyerovitch_2013} it was called the ``topological Gibbs relation''.
	\end{definition}
	
	We endow both $\cT(X)$ and $\cT^0(X)$ with the inductive limit topology that comes from viewing $\cT(X)$ and $\cT^0(X)$ as the direct limit of the compact topological spaces $(\cT_F(X))_{F \Subset \Gamma}$ and $(\cT^0_F(X))_{F \Subset \Gamma}$ respectively. With that topology the spaces $\cT(X)$ and $\cT^0(X)$ are metrizable and locally compact, but in general not compact. We remark that this topology on $\cT^0(X)$ turns it into an \define{\'etale equivalence relation} in the sense of~\cite{Put18}. Notice that a function $\Psi\colon \mathcal{T}^0(X) \to \RR$ is continuous if and only if for every interchangeable pair of patterns $(p,q)$ the map 
	$x \mapsto \Psi(x,I_{p,q}(x))$ is a continuous function with respect to the product topology on $X$.
	
	\begin{definition}
		Given an equivalence relation $\mathcal{R}\subset X \times X$, an $\mathcal{R}$-cocycle is a function $\Psi\colon \mathcal{R} \to \mathbb{R}$ that satisfies \[
		\Psi(x,z) = \Psi(x,y) + \Psi(y,z)  \mbox{ for every } (x,y),(y,z) \in \mathcal{R}.\]
	\end{definition}
	
	From the definition of $\mathcal{R}$-cocycle it follows immediately that $\Psi(x,x)=0$ for every $x \in X$ and that $\Psi(y,x)=-\Psi(x,y)$ for every $(x,y)\in \mathcal{R}$.

	\begin{definition}\label{defn:Psi_f}
		Given an equivalence relation $\mathcal{R}\subset X \times X$ and a function $f\colon X \to \mathbb{R}$ one can formally define for $(x,y)\in \mathcal{R}$
		\[
		\Psi_f(x,y) = \sum_{g \in \Gamma} \left(f(gy) -f(gx)\right).
		\]
	\end{definition}
	If the series above is absolutely convergent for every $(x,y)\in \mathcal{R}$ then $\Psi_f$ is an $\mathcal{R}$-cocycle and is furthermore $\Gamma$-invariant.

	A function $f\colon X \to \mathbb{R}$ is \define{locally constant} if there exists $F \Subset \Gamma$ such that $f(x)=f(y)$ whenever $x|_F = y|_F$. We also say that $f$ is $F$-locally constant if we want to make $F$ explicit. For locally constant functions there are finitely many non-zero terms in the series defining $\Psi_f$ for the relations $\cT(X)$ and $\cT^0(X)$, so the series is absolutely convergent. Furthermore, whenever $f$ is locally constant, the function $\Psi_f\colon \cT^0(X) \to \mathbb{R}$ is continuous.

	Recall that $\mathcal{T}^0(X)$ is a locally compact topological space. Thus, the space of continuous functions from  $\mathcal{T}^0(X)$ to $\mathbb{R}$ can be equipped with the topology of uniform convergence on compact sets, which induces a topology on the space of continuous $\mathcal{T}^0(X)$-cocycles. There are many metrics that generate this topology. For future reference we write a specific metric $d_{\mathcal{T}^0(X)}$ that generates the topology:
	\begin{definition}\label{def:metric_T_cocycles}
		The metric $d_{\mathcal{T}^0(X)}$ on the space of $\mathcal{T}^0(X)$-cocycles is given by:
		\begin{equation}
			d_{\mathcal{T}^0(X)}(\Phi,\Psi) = \sum_{n=1}^\infty\frac{1}{2^n}
			\min\left\{1, \sup_{(x,y) \in \mathcal{T}_{F_n}^0(X)} \left|\Phi(x,y)-\Psi(x,y) \right|\right\},
		\end{equation}
		where $(F_n)_{n \geq 1}$ is some fixed enumeration of the non-empty finite subsets of $\Gamma$. 
	\end{definition}
	This metric generates the topology of uniform convergence on compact sets on the space of continuous $\mathcal{T}^0(X)$-cocycles.
	Informally speaking, two cocycles $\Phi$ and $\Psi$ are close if there exists a large $N \in \NN$ such that $|\Psi(x,y)-\Phi(x,y)|$ is uniformly small over $(x,y) \in \mathcal{T}^0_{F_n}(X)$ for every $n \leq N$.
	
	\begin{definition}
		Given $f,g \in C(X)$ such that the $\mathcal{T}^0(X)$-cocycles $\Psi_f,\Psi_g$ are well defined and continuous, let
		\[
		\rho_{\mathcal{T}^0(X)}(f,g)=\|f-g\|_\infty + d_{\mathcal{T}^0(X)}(\Psi_f,\Psi_g).\]
	\end{definition}
	The map $\rho_{\mathcal{T}^0(X)}$ is a metric on the space of continuous functions for which the $\mathcal{T}^0(X)$-cocycles are well defined and continuous. It says that two functions are close if they are uniformly close, and the cocycles they define are close in the topology of uniform convergence on compact sets on $\cT^0(X)$.

	\subsection{Gibbs measures} 
	In this section we formally define Gibbs measures (also called Gibbs states). In the literature it is possible to find several definitions for Gibbs measures, which are equivalent, at least when  the setup is sufficiently restricted. Here we essentially follow the ``conformality'' framework as in~\cite{DenkerUrbanski1991,PetersenSchmidt1997}. This definition is equivalent to the Dobru\v{s}in-Lanford--Ruelle definition of Gibbs measures as shown in Section 3 of~\cite{BorMac_2020}. The conformality framework can also be expressed as an invariance property with respect to certain probability kernels as in~\cite{Alpeev2016}. In some situations, for instance for the case of finite-range interactions on shift spaces satisfying the pivot property, one can also characterize Gibbs measures by their invariance under the Glauber dynamics as in~\cite{shriver2020free}. In what follows, all measures considered will be Borel probability measures.

	We first recall the notion of the Radon-Nikod\'ym cocycle for a measure which is non-singular with respect to a countable Borel equivalence relation in the sense of \cite{FeldmanMoore1977}.
	A Borel measure $\mu$ on a topological space $X$ is \define{non-singular} with respect to a countable Borel equivalence relation $\mathcal{R}\subset X \times X$ if whenever $\mu(A)=0$ for some Borel set $A \subset X$ then
	\[ \mu \left( \bigcup_{x \in A} \left\{ y \in X : (x,y)\in \mathcal{R}\right\} \right)=0.  \]

	If $\mu$ is non-singular with respect to a countable Borel equivalence relation $\mathcal{R}$, then a \define{Radon-Nikod\'ym $\mathcal{R}$-cocycle} for $\mu$ is a measurable map $\mathcal{D}_{\mu,\mathcal{R}}\colon \mathcal{R} \to \RR_+$ such that for every Borel bijection $\phi\colon X \to X$ satisfying  $(x,\phi(x)) \in \mathcal{R}$ for $\mu$-almost every $x \in X$, we have that \[ \frac{\dd \mu \circ \phi}{\dd \mu}(x) =  \mathcal{D}_{\mu,\mathcal{R}}(x,\phi(x)) \mbox{ for $\mu$-almost every } x \in X.  \]

	For any Borel Bijection $\phi \colon X\to X$ the Radon-Nikod\'ym derivative $\frac{\dd \mu \circ \phi}{\dd \mu}$ is uniquely defined up to a $\mu$-null set. Thus the Radon-Nikod\'ym cocycle is ``essentially unique'' in the following sense: If $\mathcal{D}_1,\mathcal{D}_2\colon \mathcal{R} \to \RR_+$ are Radon-Nikod\'ym $\mathcal{R}$-cocycles of $\mu$ then there exists a Borel set $X' \subset X$ such that $\mu(X \setminus X')=0$ and  $\mathcal{D}_1$ and $\mathcal{D}_2$ coincide on $(X' \times X) \cap \mathcal{R}$.

	In what follows, we will say that two $\mathcal{R}$-cocycles coincide up to a $\mu$-null set if there exists a Borel set $X' \subset X$ with $\mu(X \setminus X')=0$ such that the cocycles coincide on $(X' \times X) \cap \mathcal{R}$. For $x \in X$ and $F \Subset X$ we denote
	\[ [x]_F \isdef [x|_F] = \left\{y\in X : y|_F = x|_F \right\}. \]
	
	Recall that the countable Borel equivalence relation $\cT^0(X)$ is generated by the homeomorphisms $I_{p,q}$ defined in~\Cref{sec:etale}. See also~\cite[Lemma 2.2]{Meyerovitch_2013}. 
	As explained in \cite{FeldmanMoore1977} the Radon-Nikod\'ym cocycle of a non-singular measure on a countable Borel equivalence relation is ``essentially uniquely determined'' by the Radon-Nikod\'ym derivatives on a set of Borel bijections that generate the relation.
	Using this fact it is easy to obtain an explicit formula for a representative of the Radon-Nikod\'ym cocycle of $\cT^0(X)$:
	\begin{lemma}\label{lemma:RN-cocycle-form-MCT}
		Let $\mu$ be a measure on $X$ which is non-singular with respect to $\cT^0(X)$. Then there exists $X' \subset X$ such that $\mu(X \setminus X')=0$ and for every $(x,y) \in (X' \times X) \cap \cT^{0}(X)$ we have \[ \mathcal{D}_{\mu,\cT^0(X)}(x,y) = \lim_{F \nearrow \Gamma} \frac{\mu([y]_F)}{\mu([x]_F)}. \]
	\end{lemma}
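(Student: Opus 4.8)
The plan is to establish the formula first for each of the generators $I_{p,q}$ of $\cT^0(X)$, and then to propagate it to the whole relation using multiplicativity of the Radon-Nikod\'ym cocycle together with its essential uniqueness (\cite{FeldmanMoore1977}). Fix an interchangeable pair $(p,q)\in\mathcal{I}_{F_0}(X)$ and abbreviate $\phi=I_{p,q}$, which is an involution. Since for any Borel $A$ the set $\phi(A)$ is contained in the $\cT^0(X)$-saturation of $A$, non-singularity of $\mu$ gives $\mu\circ\phi\ll\mu$; I write $g_\phi=\frac{\dd\mu\circ\phi}{\dd\mu}$, so that $g_\phi(x)=\mathcal{D}_{\mu,\cT^0(X)}(x,\phi(x))$ for $\mu$-a.e.\ $x$. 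For $F\Subset\Gamma$ let $\mathcal{B}_F$ denote the finite $\sigma$-algebra generated by the coordinates in $F$. The key computation is that, for $F\supseteq F_0$, the density of $\mu\circ\phi$ with respect to $\mu$ on $\mathcal{B}_F$ is constant on each cylinder and equals the desired ratio: since $\phi$ maps $[x]_F\cap X$ bijectively onto $[\phi(x)]_F\cap X$ (swapping $p$ and $q$ on $F_0$ and fixing $F\setminus F_0$), one gets $(\mu\circ\phi)([x]_F)=\mu([\phi(x)]_F)$, whence the atomic Radon-Nikod\'ym derivative equals $R_F(x):=\mu([\phi(x)]_F)/\mu([x]_F)$. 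Consequently $R_F=\mathbb{E}_\mu[g_\phi\mid\mathcal{B}_F]$ $\mu$-a.e.

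Next I would fix an exhausting sequence $F_1\subseteq F_2\subseteq\cdots$ with $\bigcup_n F_n=\Gamma$; as cylinders generate the Borel $\sigma$-algebra of $X$, the $\mathcal{B}_{F_n}$ increase to it. L\'evy's upward martingale convergence theorem then yields $R_{F_n}=\mathbb{E}_\mu[g_\phi\mid\mathcal{B}_{F_n}]\to g_\phi$ $\mu$-a.e., so there is a full-measure Borel set $X_0^\phi$ on which $\lim_{F\nearrow\Gamma}\mu([\phi(x)]_F)/\mu([x]_F)=\mathcal{D}_{\mu,\cT^0(X)}(x,\phi(x))$ (the limit being along the fixed sequence; it is independent of the sequence because the a.e.\ limit is the fixed function $g_\phi$). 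Because $\ag$ is finite and $\Gamma$ countable, there are only countably many interchangeable pairs, so $X_0:=\bigcap_\phi X_0^\phi$ is still of full measure and carries the formula for every generator simultaneously.

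To upgrade from single generators to arbitrary pairs, recall that $\cT^0(X)$ is the orbit equivalence relation of the countable group $\mathcal{G}$ generated by the maps $I_{p,q}$. Each generator is non-singular, so each $\phi(X_0)$ has full measure, and hence $X':=\bigcap_{\phi\in\mathcal{G}}\phi(X_0)$ is a $\mathcal{G}$-invariant full-measure Borel set with $X'\subseteq X_0$. Given $(x,y)\in(X'\times X)\cap\cT^0(X)$, write $y=\phi_k\cdots\phi_1(x)$ with each $\phi_i$ a generator and set $x_0=x$, $x_i=\phi_i(x_{i-1})$; by $\mathcal{G}$-invariance all of $x_0,\dots,x_{k-1}$ lie in $X'\subseteq X_0$. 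Then each factor of
\[
\frac{\mu([y]_F)}{\mu([x]_F)}=\prod_{i=1}^k\frac{\mu([x_i]_F)}{\mu([x_{i-1}]_F)}
\]
converges to $\mathcal{D}_{\mu,\cT^0(X)}(x_{i-1},x_i)$, and the cocycle (multiplicativity) identity turns the product into $\mathcal{D}_{\mu,\cT^0(X)}(x,y)$, as claimed. The main obstacle I anticipate is purely the bookkeeping needed to make the limit valid on one full-measure set for all pairs at once: the identification $R_F=\mathbb{E}_\mu[g_\phi\mid\mathcal{B}_F]$ is the substantive analytic input, while the passage to the $\mathcal{G}$-invariant set $X'$ is exactly what guarantees that every intermediate point $x_i$ remains in the good set so that the cocycle extension goes through cleanly.
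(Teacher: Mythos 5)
Your proof is correct and follows essentially the same route as the paper's: identify $\mu([\phi(x)]_F)/\mu([x]_F)$ as the conditional expectation $\mathbb{E}_\mu\left[\tfrac{\dd\mu\circ\phi}{\dd\mu}\mid\mathcal{B}_F\right]$ and apply martingale convergence along the filtration by cylinder $\sigma$-algebras, collecting a single conull set over the countably many interchange involutions. The only (harmless) detour is your final chaining step: since $\cT^0(X)=\bigcup_{F\Subset\Gamma}\cT^0_F(X)$, any pair $(x,y)\in\cT^0(X)$ is already related by the single generator $I_{x|_F,y|_F}$ for every sufficiently large $F$, so the decomposition $y=\phi_k\cdots\phi_1(x)$ and the cocycle telescoping are not actually needed.
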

	
	\begin{proof}
		Let $(x,y)\in \cT^0(X)$, then for large enough $F \Subset \Gamma$ the involution $I_{x|_F,y|_F}$ satisfies that \[ \mu([y]_F) =(\mu \circ I_{x|_F,y|_F})([x]_F) =  \int_{[x]_F}\mathcal{D}_{\mu,\cT^0(X)}(z,I_{x|_F,y|_F}(z))\dd\mu(z). \]

		Consider the filtration $(\sigma(A^{F}))_{F \Subset \Gamma}$ where the sets are ordered by inclusion and $\sigma(A^F)$ is the $\sigma$-algebra generated by $A^F$. The minimal $\sigma$-algebra generated by this filtration is the Borel $\sigma$-algebra of $A^{\Gamma}$. The martingale convergence theorem implies that for any $f \in L^1(A^{\Gamma})$ the net of conditional expectations $(\mathbb{E}_{\mu}(f|\sigma(A^F)))_{F \Subset \Gamma}$ converges to $f$ as $F \nearrow \Gamma$ both in $L^1(A^{\Gamma})$ and $\mu$-almost surely. By the Radon-Nikod\'ym theorem we have that the map $z \mapsto \mathcal{D}_{\mu,\cT^0(X)}(z,I_{x|_F,y|_F}(z))$ is in $L^1(A^{\Gamma})$. Notice that for a fixed $F\Subset \Gamma$ and $\mu$-almost every $x\in A^{\Gamma}$ \[ \mathbb{E}_{\mu}(\mathcal{D}_{\mu,\cT^0(X)}(\cdot,I_{x|_F,y|_F}(\cdot))| \sigma(A^F) ) (x)= \frac{1}{\mu([x]_F)}\int_{[x]_F}\mathcal{D}_{\mu,\cT^0(X)}(z,I_{x|_F,y|_F}(z))\dd\mu(z).\]
		
		Letting $F$ be large enough we get that $\mathcal{D}_{\mu,\cT^0(X)}(x,y)$ and $\frac{\mu([y]_F)}{\mu([x]_F)}$ are arbitrarily close outside a set of measure $0$.
	\end{proof}

	\begin{definition}
		Let $f\colon X \to \mathbb{R}$ be such that the series defining $\Psi_f(x,y)$ is absolutely convergent for every $(x,y)\in \mathcal{T}(X)$. A Borel probability measure $\mu$ on a subshift $X\subset A^{\Gamma}$ is 
		\begin{enumerate}
			\item \define{Gibbs} with respect to $f$, if $\mu$ is non-singular with respect to $\cT(X)$ and its Radon-Nikod\'ym $\cT(X)$-cocycle is equal to  $\exp(\Psi_f)$ up to a $\mu$-null set.
			\item \define{\'Etale Gibbs} with respect to $f$, if $\mu$ is non-singular with respect to $\cT^0(X)$ and its Radon-Nikod\'ym $\cT^0(X)$-cocycle is equal to $\exp(\Psi_f)$ up to a $\mu$-null set.
		\end{enumerate}
		
	\end{definition}

	As $\cT^0(X)$ is a subrelation of $\cT(X)$, it follows that for any potential $f\colon X \to \mathbb{R}$ any Gibbs measure with respect to $f$ is automatically étale Gibbs with respect to $f$. The following example shows that the converse does not hold in every subshift.
	
	\begin{example}
		Let \[ X_{\leq 1} = \{ x \in \{  \symb{0},\symb{1}  \}^{\Gamma} : |\{ g \in \Gamma : x(g)=\symb{1}   \}| \leq 1 \}  \]
		be the \define{sunny-side up} subshift. It is clear that for an infinite group $\Gamma$, the only $\Gamma$-invariant measure on $X_{\leq 1}$ is the delta measure supported on the constant configuration $0^{\Gamma}$. This measure is singular with respect to $\cT(X_{\leq 1})$, hence not Gibbs for any interaction. However, this measure is \'etale Gibbs (with respect to any $f$), since the \'etale asymptotic relation is given by $\cT^0(X_{\leq 1}) = \{(0^{\Gamma},0^{\Gamma})\} \cup \{(x,y) : x,y \in X_{\leq 1} \setminus \{0^{\Gamma}\} \}$.
	\end{example}
	
	\subsection{The topological Markov property}\label{subsec:TMP}
	
	In what follows, we shall present the topological property which characterizes the equality of the asymptotic relation $\cT(X)$ and the étale asymptotic relation $\cT^0(X)$, and therefore ensures that both definitions of Gibbs given above coincide.
	
	\begin{definition}
		We say a subshift $X \subset A^{\Gamma}$ satisfies the \define{topological Markov property} (TMP) if for every $A \Subset \Gamma$ there exists $B \Subset \Gamma$ with $A\subset B$ such that whenever $x,y$ satisfy that $x|_{B\setminus A} = y|_{B\setminus A}$ we have that the configuration $z \in A^{\Gamma}$ is in $X$, where $z(g)$ is given for every $g \in \Gamma$ by \[ z(g) = \begin{cases}
			x(g) & \mbox{ if } g \in B\\
			y(g) & \mbox{ if } g \in \Gamma \setminus B.\\
		\end{cases}  \]
		Any set $B$ satisfying the above is called a \define{memory set} of $A$.
	\end{definition}
	
	The topological Markov property, as we present it here, was defined in~\cite{BGMT_2020}. It was proposed as a generalization of the condition of being a subshift of finite type which was sufficient to prove a generalization of the Lanford--Ruelle theorem for actions of amenable groups. The TMP in turn generalizes the more restrictive notion of Topological Markov field, which was explored by several authors~\cite{ChanThesis,chandgotia2014,ChanMey2016} as a generalization of subshifts of finite type in the context of Gibbs theory. The related notion of ``splicable metric space'', which can be interpreted in the symbolic setting as a bounded variant of the TMP, was used even earlier by Gromov to provide analogues of Ax's surjunctivity theorem beyond algebraic varieties (see~\cite[Section 8.C']{Gromov1999}, and also~\cite{CeccCoorLi2021} for an application of the topological Markov property to surjunctivity of general group actions). More generally, an analogous version of the TMP can be defined for arbitrary group actions on compact metrizable spaces, and it naturally generalizes the well known pseudo-orbit tracing property, also called shadowing. See~\cite{Barbieri_Ramos_Li_2022} for further background.
	
	The class of subshifts that satisfy the topological Markov property is much larger than the class of shifts of finite type. For instance, it is known that for a fixed group $\Gamma$ there are countably many subshifts of finite type up to topological conjugacy, whereas for $\Gamma =\ZZ^2$ there exist uncountably many non-conjugate subshifts with the topological Markov property~\cite[page 233]{chandgotia2014}. Moreover, every subshift which has a trivial asymptotic relation satisfies the property~\cite[Proposition 5.3]{Barbieri_Ramos_Li_2022}. Another interesting family of examples is algebraic: every subshift whose alphabet is a finite group and is closed under the pointwise group operation satisfies the property~\cite[Proposition 5.1]{BGMT_2020}, while there are examples with that structure which are not of finite type if the acting group is solvable but not polycyclic-by-finite~\cite{Salo2018_groupshiftSFT}.
	
	It is clear from the definition that if $X$ satisfies the TMP and $B$ is a memory set of $A$, then all patterns with support $B$ which coincide on $B\setminus A$ and occur in some configuration of $X$ are pairwise interchangeable. This property in fact characterizes the spaces where the étale asymptotic relation coincides with the asymptotic relation as shown in the next proposition.
	
	\begin{proposition}\label{prop:GibbsTMP}
		Let $X\subset A^{\Gamma}$ be a subshift. Then $X$ satisfies the TMP if and only if $\cT^0(X)= \cT(X)$. 
		
		In particular, if $X$ satisfies the TMP and $f\colon X \to \mathbb{R}$ is such that the series defining $\Psi_f(x,y)$ is absolutely convergent for every $(x,y)\in \mathcal{T}(X)$, then every Borel probability measure is Gibbs with respect $f$ if and only if it is \'etale Gibbs with respect to $f$. 
	\end{proposition}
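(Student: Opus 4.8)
The plan is to prove the topological equivalence first and then read off the ``in particular'' clause as an immediate consequence: being Gibbs and being étale Gibbs are literally the \emph{same} non-singularity-plus-cocycle condition, taken with respect to $\cT(X)$ and $\cT^0(X)$ respectively, so once we know $\cT(X)=\cT^0(X)$ the two notions coincide verbatim (and absolute convergence of $\Psi_f$ on $\cT(X)$ automatically descends to $\cT^0(X)\subseteq\cT(X)$). Throughout I would use that $\cT^0(X)\subseteq\cT(X)$ holds by definition, so the real content is the equivalence of the TMP with the reverse inclusion $\cT(X)\subseteq\cT^0(X)$. The single technical tool I would isolate first is a \emph{monotonicity lemma} for interchangeability: if $x,y\in X$ and $F\subseteq F'$ are finite with $x|_{F'\setminus F}=y|_{F'\setminus F}$, then $(x|_F,y|_F)\in\mathcal{I}_F(X)$ implies $(x|_{F'},y|_{F'})\in\mathcal{I}_{F'}(X)$. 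This is a purely formal rewriting of the defining equivalence, repackaging a test configuration on $\Gamma\setminus F'$ together with the common values on $F'\setminus F$ as a test configuration on $\Gamma\setminus F$. Its contrapositive---non-interchangeability propagates \emph{downwards} to smaller windows---is what drives the hard direction.

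For the forward implication (TMP $\Rightarrow \cT(X)\subseteq\cT^0(X)$), I would take $(x,x')\in\cT_A(X)$, choose a memory set $B\supseteq A$ for $A$, and show $(x|_B,x'|_B)\in\mathcal{I}_B(X)$, which gives $(x,x')\in\cT^0_B(X)$. To verify interchangeability, suppose $w|_{\Gamma\setminus B}\vee x|_B\in X$ for some $w\in\ag^\Gamma$; calling this configuration $u$, we have $u|_{B\setminus A}=x|_{B\setminus A}=x'|_{B\setminus A}$ because $x$ and $x'$ agree off $A\subseteq B$. Applying the memory-set gluing to $x'$ and $u$ (both in $X$, agreeing on $B\setminus A$) produces $w|_{\Gamma\setminus B}\vee x'|_B\in X$, and the reverse is symmetric. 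This is exactly the observation recorded just before the statement, that admissible $B$-patterns agreeing on $B\setminus A$ are pairwise interchangeable.

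For the converse I would argue by contraposition, producing a pair in $\cT(X)\setminus\cT^0(X)$ from a failure of the TMP. Fix $A\Subset\Gamma$ witnessing the failure, so that for every finite $B\supseteq A$ there are $x_B,y_B\in X$ with $x_B|_{B\setminus A}=y_B|_{B\setminus A}$ whose glued configuration $x_B|_B\vee y_B|_{\Gamma\setminus B}\notin X$; testing against $w=y_B$ shows $(x_B|_B,y_B|_B)\notin\mathcal{I}_B(X)$. Choosing $B_n\nearrow\Gamma$ and passing to a subsequence along which $(x_{B_n},y_{B_n})\to(x,y)$ in $X\times X$, I would check that $x|_{\Gamma\setminus A}=y|_{\Gamma\setminus A}$ (each coordinate outside $A$ eventually lies in some $B_n\setminus A$), so $(x,y)\in\cT_A(X)\subseteq\cT(X)$. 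To see $(x,y)\notin\cT^0(X)$, fix any window $W\supseteq A$; for large $k$ we have $W\subseteq B_{n_k}$ and the witnesses agree on $B_{n_k}\setminus W\subseteq B_{n_k}\setminus A$, so the contrapositive of the monotonicity lemma propagates the non-interchangeability at $B_{n_k}$ down to $W$, giving $(x_{B_{n_k}}|_W,y_{B_{n_k}}|_W)\notin\mathcal{I}_W(X)$; since these patterns stabilize to $(x|_W,y|_W)$, we conclude $(x|_W,y|_W)\notin\mathcal{I}_W(X)$ for every $W\supseteq A$, hence $(x,y)\notin\cT^0(X)$.

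The main obstacle, and the reason a naive compactness argument is not enough, is precisely the gap between the \emph{local} agreement ``$x|_{B\setminus A}=y|_{B\setminus A}$'' appearing in the TMP and the \emph{global} agreement ``$x|_{\Gamma\setminus A}=y|_{\Gamma\setminus A}$'' defining the asymptotic relation. A compactness argument does yield a single $B$ with $\cT_A(X)\subseteq\cT^0_B(X)$, but this only controls pairs that agree globally off $A$ and does not by itself certify $B$ as a memory set (trying to realize arbitrary locally-agreeing patterns by globally-agreeing configurations is circular with the gluing one wants to prove). The contraposition above circumvents this by manufacturing the required global pair as a limit of the local failures, and the care needed is to confirm at each step that the annulus-agreement hypothesis of the monotonicity lemma is met, which is exactly what lets non-interchangeability survive the passage to the limit.
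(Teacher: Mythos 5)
Your proposal is correct and follows essentially the same route as the paper's proof: the forward direction uses a memory set to certify interchangeability of the restricted patterns, and the converse argues by contraposition, taking an accumulation point of the witnesses to the failure of the TMP and propagating their non-interchangeability down to every finite window. The only difference is that you isolate as an explicit monotonicity lemma the fact that (non-)interchangeability transfers between nested windows agreeing on the annulus, a step the paper uses implicitly in its final paragraph.
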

	
	\begin{proof}
		Suppose $X$ satisfies the TMP. Obviously, it is always true that $\cT^0(X) \subset \cT(X)$. Let $(x,y)\in \cT(X)$. Then there exists $F \Subset \Gamma$ such that $(x,y) \in \cT_F(X)$. As $X$ satisfies the TMP, there exists a memory set $B$ for $F$, and therefore the pair of words $x|_{B}$ and $y|_{B}$ are interchangeable, hence $(x,y) \in \cT^0_B(X)$.
		
		Conversely, suppose $X$ does not satisfy the TMP and let $(F_n)_{n \in \NN}$ be an increasing sequence of finite subsets of $\Gamma$ such that $F_n \nearrow \Gamma$. Then there is $A\Subset \Gamma$ such that for every $F_n$ which contains $A$, there are $x^{(n)},y^{(n)}\in X$ such that $x^{(n)}|_{F_n \setminus A} = y^{(n)}|_{F_n \setminus A}$ but the patterns $p_n = x^{(n)}|_{F_n}$ and $q_n = y^{(n)}|_{F_n}$ are not interchangeable. Let $(x',y')$ be an accumulation point of the sequence $(x^{(n)},y^{(n)})_{n \in \NN}$. By definition, $(x',y')\in \cT_A(X)$ and thus $(x',y')\in \cT(X)$.
		
		It follows that for each $K \Subset \Gamma$ which contains $A$ there is $m \in \NN$ such that $p_m|_K = x'|_K$ and $q_m|_K= y'|_K$. Thus $(x',y')\notin \cT_K^0(X)$ for any large enough $K$ which implies that $(x',y')\notin \cT^0(X)$. This shows that $\cT^0(X)\neq \cT(X)$.
	\end{proof}
	
	\section{Sofic groups and sofic pressure}\label{sec:soficgroupspressure}
	
	\subsection{Sofic groups}
	
	For a finite set $V$ we write $\Sym(V)$ for the group of permutations of $V$. A group $\Gamma$ is \define{sofic} if there exist a sequence $(V_{i})_{i\in \NN}$ of finite sets such that $|V_i|=n_i$ goes to infinity and a sequence $\Sigma=\{\sigma_i \colon \Gamma\rightarrow \Sym(V_i) \}_{i=1}^{\infty}$ that is
	\begin{align}
		\mbox{asymptotically an action: } & \lim_{i\rightarrow\infty} \frac{1}{n_i} \left| \left\{ v\in V_{i}
		: \sigma_{i}(st)v=\sigma_{i}(s)\sigma_{i}(t)v\right\}\right|   =1&
		\mbox{ for every } s,t\in \Gamma\\
		\mbox{asymptotically free: } & \lim_{i\rightarrow\infty} \frac{1}{n_i}\left|\left\{ v\in V_{i}
		: \sigma_{i}(s)v\neq\sigma_{i}(t)v\right\}\right| =1  &  \mbox{ for every } s\neq t\in \Gamma.
	\end{align}
	In this case we say $\Sigma$ is a \define{sofic approximation sequence} of $\Gamma$.
	
	\begin{definition}
		Given  $\sigma\colon \Gamma \to \Sym(V)$  and $F \Subset \Gamma$, we say that $v \in V$ is \define{$F$-good for $\sigma$} if $\sigma(st)v=\sigma(s)\sigma(t)v$ for all $s,t \in F$ and $\sigma(s)v \ne \sigma(t)v$ for all $s \ne t \in F$.
	\end{definition}
	With this notation, saying that $\Sigma=\{\sigma_i \colon \Gamma\rightarrow \Sym(V_i) \}_{i=1}^{\infty}$ is a sofic approximation means that for any $F \Subset \Gamma$,
	\[
	\lim_{i \to \infty}\frac{1}{n_i}\left|\left\{ v\in V_{i}
	: v \mbox{ is } F\mbox{-good for }\sigma_i \right\}\right| =1.
	\]
	\subsection{Pullback names and empirical distributions}
	For the remainder of this section, we fix a compact metrizable space $X$, an action $\Gamma \curvearrowright X$. we denote by $\Prob(X)$ the space of Borel probability measures on $X$, and by $\Prob_{\Gamma}(X)$ the space of $\Gamma$-invariant Borel probability measures on $X$. We also consider the space $X^{\Gamma}$ with the product topology and the natural left action $\Gamma \curvearrowright X^{\Gamma}$ given by $(gx)(h) = x(g^{-1}h)$ for every $g,h \in \Gamma$ and $x \in X^{\Gamma}$.
	
	The following notation closely follows Austin's approach to sofic entropy \cite{Austin2016Additivity}:
	\begin{definition}
		Let $X$ be a compact metrizable space, $V$ be a finite set, $v \in V$ and $\sigma \colon \Gamma \to \Sym(V)$. We define the following maps
		\begin{enumerate}
			\item $\xi_{\sigma,v} \colon X^{V} \to X^{\Gamma}$ given by \[ \xi_{\sigma,v}(x)(g) = x(\sigma(g^{-1})v) \mbox{ for every } g \in \Gamma.  \]
			\item $\xi_{\sigma}\colon X^V \to \mbox{Prob}(X^{\Gamma})$ given by
			\[ \xi_{\sigma}(x) = \frac{1}{|V|}\sum_{v \in V}{\delta_{\xi_{\sigma,v}(x)}}.  \]
			\item $\zeta_{\sigma} \colon \mbox{Prob}(X^{V}) \to \mbox{Prob}(X^{\Gamma})$ given by \[ \zeta_{\sigma}(\nu)(A) = \int_{X^{V}} \xi_{\sigma}(x)(A) \dd \nu (x) \mbox{ for every Borel set }A.    \]	
		\end{enumerate}
	\end{definition}
	In \cite{Austin2016Additivity} $\xi_{\sigma,v}(x)$ was called ``the \define{pullback name} of $x$ via $\sigma$ at $v$'', and $\xi_\sigma(x)$ was called  ``the \define{empirical distribution} of $x$''. 
	
	In the case where $X$ is a discrete finite set (i.e, an alphabet), for $\nu \in \mbox{Prob}(X^{V})$ we can write $\nu = \sum_{ x \in X^V}\lambda_x \delta_{x}$ and thus we may express the map $\zeta_{\sigma}$ in the following convenient way \[ \zeta_{\sigma}(\nu) = \sum_{x \in X^V}{\lambda_x \xi_{\sigma}(x)} = \sum_{x \in X^V} \frac{\lambda_x}{|V|}\sum_{v \in V}{\delta_{\xi_{\sigma,v}(x)}}.   \]
	
	Recall that a basis for $X^{\Gamma}$ is given by the \define{cylinder sets} i.e. sets $U \subset X^\Gamma$ of the form  \[U = \{ x \in X^{\Gamma}: x(s) \in U_s \mbox{ for every }s \in S\},\]  where $S\Subset \Gamma$ and $U_s \subset X$ is open for every $s \in S$.
	
	\begin{proposition}\label{prop:sofic_limit_measure_invariant}
		Let $\Sigma=\{\sigma_i \colon \Gamma\rightarrow \Sym(V_i) \}_{i=1}^{\infty}$ be a sofic approximation sequence of $\Gamma$. For any $\varepsilon>0$, $g \in \Gamma$ and cylinder set $U\subset X^{\Gamma}$, there is $N \in \NN$ such that for any $i \geq N$ and $\nu_i \in \mbox{Prob}(X^{V_i})$ we have $|\left(\zeta_{\sigma_i}(\nu_i)- g\zeta_{\sigma_i}(\nu_i) \right)(U)|\leq \varepsilon$. In particular, Any weak-$*$ limit of a sequence of measures $(\zeta_{\sigma_i}(\nu_i))_{i \in \NN}$ with $\nu_i \in \mbox{Prob}(X^{V_i})$ is a $\Gamma$-invariant probability measure on $X^{\Gamma}$.
	\end{proposition}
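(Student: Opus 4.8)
The plan is to exploit the fundamental identity relating the $\Gamma$-action on pullback names with the permutation action of $\sigma$ on $V$. Fix $g\in\Gamma$ and a cylinder set $U=\{x\in X^\Gamma : x(s)\in U_s \text{ for all } s\in S\}$ with $S\Subset\Gamma$, and set $F\isdef\{g\}\cup\{s^{-1}:s\in S\}$. Directly from the definitions, for $h\in\Gamma$ one computes $(g\cdot\xi_{\sigma,v}(x))(h)=\xi_{\sigma,v}(x)(g^{-1}h)=x(\sigma(h^{-1}g)v)$, whereas $\xi_{\sigma,\sigma(g)v}(x)(h)=x(\sigma(h^{-1})\sigma(g)v)$. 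Hence, whenever $v$ is $F$-good for $\sigma$, taking $h=s\in S$ and using $\sigma(s^{-1}g)v=\sigma(s^{-1})\sigma(g)v$ shows that $g\cdot\xi_{\sigma,v}(x)$ and $\xi_{\sigma,\sigma(g)v}(x)$ agree on every coordinate in $S$; in particular $\mathbbm{1}_U(g\cdot\xi_{\sigma,v}(x))=\mathbbm{1}_U(\xi_{\sigma,\sigma(g)v}(x))$. This is the crux of the argument, and everything rests on choosing $F$ large enough to trivialize the action on the finitely many coordinates that $U$ sees.

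Next I would rewrite both quantities as averages over $V$. Since $(g\zeta_\sigma(\nu))(U)=\zeta_\sigma(\nu)(g^{-1}U)=\int_{X^V}(g\cdot\xi_\sigma(x))(U)\,\dd\nu(x)$, we obtain
\[(\zeta_\sigma(\nu)-g\zeta_\sigma(\nu))(U)=\int_{X^V}\frac{1}{|V|}\sum_{v\in V}\bigl(\mathbbm{1}_U(\xi_{\sigma,v}(x))-\mathbbm{1}_U(g\cdot\xi_{\sigma,v}(x))\bigr)\dd\nu(x).\]
Let $G\subseteq V$ be the set of $F$-good points and $B=V\setminus G$. For $v\in G$ replace $\mathbbm{1}_U(g\cdot\xi_{\sigma,v}(x))$ by $\mathbbm{1}_U(\xi_{\sigma,\sigma(g)v}(x))$ and re-index the resulting sum via the bijection $v\mapsto\sigma(g)v$ of $V$; one then checks that $\frac1{|V|}\sum_{v\in V}\mathbbm{1}_U(g\cdot\xi_{\sigma,v}(x))$ equals $\frac1{|V|}\sum_{w\in V}\mathbbm{1}_U(\xi_{\sigma,w}(x))$ up to an error controlled by the bad set, the mismatch coming only from $v\in B$ and from $w\notin\sigma(g)G$, which are each of size $|B|$. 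This gives a bound $2|B|/|V|$ that is uniform in $x$, and integrating against $\nu$ preserves it, so $|(\zeta_\sigma(\nu)-g\zeta_\sigma(\nu))(U)|\le 2|B|/|V|$. Since $F$ is a fixed finite set, soficity yields $|B_i|/|V_i|\to 0$, and choosing $N$ with $2|B_i|/|V_i|\le\varepsilon$ for $i\ge N$ establishes the first assertion, uniformly over all $\nu_i\in\Prob(X^{V_i})$.

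For the invariance statement I would run the identical estimate with the indicator $\mathbbm{1}_U$ replaced by an arbitrary $f\in C(X^\Gamma)$ depending only on the coordinates in some $S\Subset\Gamma$; the same re-indexing gives $\bigl|\int f\,\dd(\zeta_\sigma(\nu)-g\zeta_\sigma(\nu))\bigr|\le 2\|f\|_\infty|B|/|V|\to 0$. The main obstacle is precisely here: a cylinder indicator need not be continuous when $X$ is not totally disconnected, so one cannot feed cylinder sets directly into a weak-$*$ limit, which is why I pass to continuous test functions supported on finitely many coordinates. Such functions are dense in $C(X^\Gamma)$ by Stone--Weierstrass, and since $\zeta_\sigma(\nu)$ and $g\zeta_\sigma(\nu)$ are probability measures their difference has total variation at most $2$, so the estimate extends from this dense subspace to all of $C(X^\Gamma)$. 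Consequently, if $\mu$ is a weak-$*$ limit of $(\zeta_{\sigma_{i_k}}(\nu_{i_k}))_k$, then using weak-$*$ continuity of the $g$-action on $\Prob(X^\Gamma)$ we get, for every $f\in C(X^\Gamma)$ and $g\in\Gamma$,
\[\int f\,\dd(g\mu)=\lim_k\int f\,\dd(g\zeta_{\sigma_{i_k}}(\nu_{i_k}))=\lim_k\int f\,\dd\zeta_{\sigma_{i_k}}(\nu_{i_k})=\int f\,\dd\mu,\]
where the middle equality holds because the difference of the two integrands tends to $0$. Hence $g\mu=\mu$ for all $g\in\Gamma$, that is, $\mu\in\Prob_\Gamma(X^\Gamma)$.
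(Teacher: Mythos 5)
Your proof is correct and its core is the same counting argument as the paper's: use soficity to find a large set of $F$-good vertices on which $g\cdot\xi_{\sigma,v}(x)$ and $\xi_{\sigma,\sigma(g)v}(x)$ agree on the coordinates seen by $U$, re-index by the permutation $\sigma(g)$, and absorb the mismatch into a $2|B|/|V|$ error (the paper phrases this by intersecting the good set with its image under $\sigma_i(g^{-1})$, which is the same bookkeeping). The one place you genuinely diverge is the passage to the weak-$*$ limit: the paper simply appeals to cylinder sets forming a basis, whereas you correctly observe that indicators of cylinders need not be continuous, so weak-$*$ convergence alone does not transport the cylinder estimate to the limit, and you instead rerun the estimate for continuous functions of finitely many coordinates and invoke their density in $C(X^\Gamma)$. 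This is the more careful route, and it is also the form actually used later in the paper (the restatement in terms of the metric $d_P$ on $\Prob(X^\Gamma)$ requires exactly the continuous-test-function version). No gaps.
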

	
	\begin{proof}
		Let $\varepsilon>0$ be arbitrary, $S\Subset \Gamma$ be a symmetric set containing the identity,  and let $U = \{ x \in X^{\Gamma}: x(s) \in U_s \mbox{ for every }s \in S\}$, with $U_s \subset X$  open in $X$ for every $s \in S$.
		Let $g \in \Gamma$. Then for $\nu_i \in \mbox{Prob}(X^{V_i})$ we may write
		
		\begin{align}
			\left(\zeta_{\sigma_i}(\nu_i)- g\zeta_{\sigma_i}(\nu_i) \right)(U) &  = \int_{X^{V_i}} \frac{1}{|V_i|}\sum_{v \in V_i}{\left(\delta_{\xi_{\sigma_i,v}(x)}(U) - \delta_{\xi_{\sigma_i,v}(x)}(g^{-1}U)\right)} \dd \nu_i.\\
			&  = \int_{X^{V_i}}  \frac{1}{|V_i|} \left|\{v \in V_i : \xi_{\sigma_i,v}(x)(s) \in U_s \mbox{ for every } s \in S \}\right| \dd \nu_i \\
			&  \quad - \int_{X^{V_i}}  \frac{1}{|V_i|}\left| \{v \in V_i : \xi_{\sigma_i,v}(x)(g^{-1}s) \in U_s \mbox{ for every } s \in S \}\right| \dd \nu_i.
		\end{align}
		
		Notice that $\xi_{\sigma_i,v}(x)(s) = x(\sigma_i(s^{-1})(v))$ and that $\xi_{\sigma_i,v}(x)(g^{-1}s) = x(\sigma_i(s^{-1}g)(v))$. As $\Sigma$ is asymptotically an action, for every large enough $i\in \NN$ there exists $V_i'' \subset V_i$ which is $(S\cup gS \cup g^{-1} S)$-good for $\sigma_i$ and such that $|V_i''|\geq (1-\frac{\varepsilon}{4})|V_i|$. Let $V_i'  = V_i'' \cap \sigma_i(g^{-1})V_i''$. Then $|V_i'| \geq (1-\frac{\varepsilon}{2})|V_i|$.
		We get that  that $\xi_{\sigma_i,v}(x) \in U$ if and only if $\xi_{\sigma_i,\sigma_i(g^{-1})v}(x)\in g^{-1}U$ for every $v \in V'_i$. We can thus eliminate vertices in $V_i'$ in the equation above and obtain the following estimate:
		
		\begin{align}
			\left\vert \left(\zeta_{\sigma_i}(\nu_i)- g\zeta_{\sigma_i}(\nu_i) \right)(U) \right\vert \leq  \int_{X^{V_i}} \frac{2|V_i \setminus V'_i| }{|V_i|} \dd \nu_i \leq \varepsilon.
		\end{align}
		
		As the cylinders sets are a basis for the product topology on $X^{\Gamma}$ the result follows.\end{proof}
	
	When $\Gamma$ is countable, both the product topology of $X^{\Gamma}$ and the space $\Prob(X^{\Gamma})$ are metrizable. Let us denote a compatible metric on $X^{\Gamma}$ by $d_{T}$ and a compatible metric on $\Prob(X^{\Gamma})$ by $d_{P}$ (for instance, the Kantorovich metric). The previous lemma can then be restated in the following way: for every $\varepsilon>0$ and $g \in \Gamma$ there is $N \in \NN$ such that for every $i \geq N$ we have ${d}_{P}(\zeta_{\sigma_i}(\nu_i),g \zeta_{\sigma_i}(\nu_i) ) \leq \varepsilon$ for any measure $\nu_i$ on $X^{V_i}$.

	Given a closed $\Gamma$-invariant set $Y \subset X^\Gamma$, $\mu  \in \Prob(X^{\Gamma})$ and $\delta >0$, we define:
	\[N_{\delta}(Y) \isdef \{  \nu \in \Prob(X^{\Gamma}) : \nu(\{ x \in X^{\Gamma} : \min_{y \in Y} d_{T}(x,y)\geq\delta  \})\leq\delta   \}, \]
	\[ N_{\delta}(\mu) = \{  \nu \in \Prob(X^{\Gamma}) : {d}_{P}(\mu,\nu)\leq\delta   \}. \]
	
	\begin{lemma}\label{lem:sofic_measures_accum}
		Let $\Gamma$ be a countable group, $\Sigma=\{\sigma_i \colon \Gamma\rightarrow \Sym(V_i) \}_{i=1}^{\infty}$ be a sofic approximation sequence of $\Gamma$ and $Y\subset X^{\Gamma}$ a closed $\Gamma$-invariant set. For any $\delta >0$ there exists $\delta' >0$ and $N\in \NN$ such that for any $i \geq N$,
		\[
		\xi_{\sigma_i}^{-1}({N_{\delta'}(Y)}) \subset \bigcup_{\mu \in \Prob_\Gamma(Y)}\xi_{\sigma_i}^{-1}(N_{\delta}(\mu)).
		\]
		Furthermore, if $\Prob_{\Gamma}(Y)\neq \varnothing$, then for any $\delta >0$ there exists $\delta' >0$, $N \in \NN$ and $\mu_1,\ldots, \mu_k \in \Prob_\Gamma(Y)$ such that for any $i \geq N$,
		\[
		\xi_{\sigma_i}^{-1}({N_{\delta'}(Y)}) \subset \bigcup_{j=1}^k\xi_{\sigma_i}^{-1}(N_{\delta}(\mu_j)).
		\]
	\end{lemma}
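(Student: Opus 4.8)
The plan is to prove the first inclusion by a weak-$*$ compactness and contradiction argument, and then to extract the finite version from it by a covering argument using the compactness of $\Prob_\Gamma(Y)$.

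For the first inclusion I would argue by contradiction. Negating the statement, there is a $\delta>0$ such that for every $\delta'>0$ and every $N\in\NN$ there exist $i\geq N$ and $x\in X^{V_i}$ with $\xi_{\sigma_i}(x)\in N_{\delta'}(Y)$ but $d_P(\xi_{\sigma_i}(x),\mu)>\delta$ for every $\mu\in\Prob_\Gamma(Y)$. Applying this with $\delta'=1/n$ and $N=n$ and diagonalizing produces indices $i_n\to\infty$ and configurations $x_n\in X^{V_{i_n}}$; write $\nu_n\isdef\xi_{\sigma_{i_n}}(x_n)$, and note that $\xi_\sigma(x)=\zeta_\sigma(\delta_x)$, so each $\nu_n$ is of the form covered by \Cref{prop:sofic_limit_measure_invariant}. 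Since $\Prob(X^\Gamma)$ is weak-$*$ compact, I pass to a subsequence along which $\nu_n\to\nu$.

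The crux is to check that $\nu\in\Prob_\Gamma(Y)$. Invariance of $\nu$ follows at once from \Cref{prop:sofic_limit_measure_invariant} (applied along the subsequence, which is itself a sofic approximation sequence). For the support, fix $\varepsilon>0$ and consider the open set $O_\varepsilon\isdef\{z\in X^\Gamma:\min_{y\in Y}d_T(z,y)>\varepsilon\}$. For every $n$ with $1/n\leq\varepsilon$ one has $O_\varepsilon\subset\{z:\min_{y\in Y}d_T(z,y)\geq 1/n\}$, so $\nu_n\in N_{1/n}(Y)$ gives $\nu_n(O_\varepsilon)\leq 1/n$; the portmanteau inequality for open sets then yields $\nu(O_\varepsilon)\leq\liminf_n\nu_n(O_\varepsilon)=0$. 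Since $Y$ is closed, $X^\Gamma\setminus Y=\bigcup_{k\geq 1}O_{1/k}$, whence $\nu(X^\Gamma\setminus Y)=0$ and $\nu\in\Prob_\Gamma(Y)$. Taking $\mu=\nu$ in the contradiction hypothesis gives $d_P(\nu_n,\nu)>\delta$ for all $n$, which is impossible since $d_P$ metrizes weak-$*$ convergence and $\nu_n\to\nu$. This proves the first inclusion.

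For the finite version, assume $\Prob_\Gamma(Y)\neq\varnothing$ and apply the first part with $\delta/2$ in place of $\delta$, obtaining $\delta'>0$ and $N$ with $\xi_{\sigma_i}^{-1}(N_{\delta'}(Y))\subset\bigcup_{\mu\in\Prob_\Gamma(Y)}\xi_{\sigma_i}^{-1}(N_{\delta/2}(\mu))$ for all $i\geq N$. As $Y$ is compact, $\Prob_\Gamma(Y)$ is a nonempty weak-$*$ compact subset of $\Prob(X^\Gamma)$, so it is covered by finitely many open $d_P$-balls of radius $\delta/2$ centred at some $\mu_1,\dots,\mu_k\in\Prob_\Gamma(Y)$. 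The triangle inequality gives $N_{\delta/2}(\mu)\subset N_\delta(\mu_j)$ whenever $d_P(\mu,\mu_j)<\delta/2$, so the union over all $\mu\in\Prob_\Gamma(Y)$ collapses onto the finite union over $j=1,\dots,k$, yielding the desired inclusion. I expect the only genuinely delicate point to be the simultaneous verification that the limit $\nu$ is $\Gamma$-invariant and supported on $Y$: invariance comes from the approximate invariance of empirical distributions, while the support requires using the open neighbourhoods $O_\varepsilon$ so that the portmanteau inequality points in the useful direction.
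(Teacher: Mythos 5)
Your proof is correct. It rests on the same two ingredients as the paper's argument -- the asymptotic invariance of empirical distributions from \Cref{prop:sofic_limit_measure_invariant} and weak-$*$ compactness of $\Prob(X^\Gamma)$, followed by a finite subcover of the compact set $\Prob_\Gamma(Y)$ for the second inclusion -- but you package the first inclusion differently. You argue by contradiction: extract a sequence of microstates whose empirical distributions lie in ever-smaller neighbourhoods $N_{1/n}(Y)$ yet stay $\delta$-far from every invariant measure on $Y$, pass to a weak-$*$ limit, and verify via \Cref{prop:sofic_limit_measure_invariant} and the portmanteau inequality (applied to the open sets $O_\varepsilon$, which is indeed the direction one needs) that the limit lands in $\Prob_\Gamma(Y)$. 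The paper instead argues directly: it introduces the sets $P(\delta',g)$ of approximately $g$-invariant measures, writes $\Prob_\Gamma(Y)$ as the nested intersection $\bigcap_{\delta',F}\bigl(N_{\delta'}(Y)\cap\bigcap_{g\in F}P(\delta',g)\bigr)$, and uses compactness (the finite intersection property) to find a single $\delta'$ and $F$ with $N_{\delta'}(Y)\cap\bigcap_{g\in F}P(\delta',g)\subset\bigcup_{\mu}N_\delta(\mu)$, into which all empirical distributions eventually fall. The direct version produces the witnesses $\delta'$, $F$, $N$ explicitly and hands the same compact set straight to the finite-subcover step; your sequential version is arguably more familiar but requires the diagonalization bookkeeping and the separate verification that the limit measure is both invariant and supported on $Y$ -- a point you correctly flag and handle. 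Both routes are valid, and your finite-cover step (radius $\delta/2$ balls plus the triangle inequality) matches the paper's.
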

	
	\begin{proof}
		By \Cref{prop:sofic_limit_measure_invariant},
		for any $\varepsilon >0$ and $g \in \Gamma$ there exists $N \in \mathbb{N}$ such that for every $i \geq N$ and $x \in X^{V_i}$ , 
		${d}_{P}(\zeta_{\sigma_i}(\delta_x),g \zeta_{\sigma_i}(\delta_x) ) = {d}_{P}(\xi_{\sigma_i}(x),g \xi_{\sigma_i}(x) ) \leq \varepsilon$.
		For $\delta >0$ and $g \in \Gamma$ let
		\[P(\delta,g) \isdef \{  \nu \in \Prob(X^{\Gamma}) : {d}_{P}(\nu,g\nu)\leq\delta   \}.\]
		It follows that for any $\delta'>0$ and $F\Subset \Gamma$ there exists $N\in \NN$ such that for any $i \geq N$
		\[
		\xi_{\sigma_i}(A^{V_i}) \subset \bigcap_{g \in F}P(\delta',g). 
		\]
		Now, it is clear from the definition that we have
		\[
		\Prob_\Gamma(Y) = \bigcap_{\delta' >0, F \Subset \Gamma} \left({N_{\delta'}(Y)} \cap \bigcap_{g \in F}P(\delta',g)\right).
		\]
		Since $\Prob_\Gamma(Y)  \subset \bigcup_{\mu \in \Prob_\Gamma(Y)}N_\delta(\mu)$ it follows by compactness that for every $\delta >0$  there exists
		$\delta' >0$ and 	$F \Subset \Gamma$ such that
		\[{N_{\delta'}(Y)} \cap \bigcap_{g \in F}P(\delta',g) \subset \bigcup_{\mu \in \Prob_\Gamma(Y)}N_\delta(\mu).\]
		
		As $\xi_{\sigma_i}^{-1}(P(\delta',g))=A^{V_i}$ for every $g \in F$ and $i \geq N$, it follows that 
		\[
		\xi_{\sigma_i}^{-1}\left({N_{\delta'}(Y)} \right)\subset \xi_{\sigma_i}^{-1}\left(\bigcup_{\mu \in \Prob_{\Gamma}(Y)} N_{\delta}(\mu)\right) = \bigcup_{\mu \in \Prob_{\Gamma}(Y)} \xi_{\sigma_i}^{-1}(N_{\delta}(\mu)).
		\]
		
		Since ${N_{\delta'}(X)} \cap \bigcap_{g \in F}P(\delta',g)$ is  compact, and $N_\delta(\mu)$ is open for every $\mu \in \Prob(X)$, it follows that if $\Prob_{\Gamma}(Y)$ is nonempty, then there exists $\mu_1,\ldots, \mu_k \in \Prob_\Gamma(Y)$ such that
		\[{N_{\delta'}(Y)} \cap \bigcap_{g \in F}P(\delta',g) \subset \bigcup_{j=1}^kN_\delta(\mu_j).\]
		Therefore we may conclude that\[
		\xi_{\sigma_i}^{-1}\left({N_{\delta'}(Y)} \right)\subset  \bigcup_{j=1}^k \xi_{\sigma_i}^{-1}(N_{\delta}(\mu_j)).
		\]
	\end{proof}
	
	\subsection{Topological and measure-theoretic pressure of actions of sofic groups}
	
	The notions of topological and measure-theoretic pressure for $\mathbb{Z}^d$-actions were introduced and studied by Ruelle~\cite{RuelleTAMS1973}, who also proved a variational principle extending the variational principle for entropy. Both notions and the variational principle were latter extended to actions of countable amenable groups by Ollagnier and Pinchon~\cite{ollagnier1982variational,Ollagnier1985book}, Stepin and Tagi-Zade~\cite{stepin1980variational} and Tempelman~\cite{tempelman1984specific}.
	A further generalization of the notions of measure-theoretic and topological pressure for actions of sofic groups on compact metrizable spaces was introduced by Nhan-Phu Chung in~\cite{chung_2013}, who also proved the corresponding variational principle in this setting (building on the landmark papers of Bowen \cite{Bowen2010_2} and Kerr and Li~\cite{KerrLi2011}). A similar notion is also introduced by Alpeev in~\cite{Alpeev2016} for the space $A^{\Gamma}$, albeit with different objectives.
	
	In what follows we will recall Chung's notions of pressure for actions of sofic groups, and then restate them in an equivalent manner using Austin's framework of ``empirical measures'', which we introduced above. This formalism will turn out to be convenient in our symbolic setting.

	Let $\Gamma$ be a sofic group acting on a compact metrizable space $X$ by homeomorphisms, $\Sigma= \{ \sigma_i \colon \Gamma \to \Sym(V_i)\}$ a sofic approximation sequence for $\Gamma$, $\rho$ a continuous pseudometric on $X$ and $f \colon X \to \RR$ a continuous map.
	
	Let $i \in \NN$, $F \Subset \Gamma$ and $\delta >0$. The set $\Map(\rho, F,\delta,\sigma_i)$ consists of all maps $\varphi \colon V_i \to X$ such that \[ \max_{ s \in F } \left( \frac{1}{|V_i|} \sum_{ v \in V_i} (\rho( s\varphi(v)  , \varphi(\sigma_s(v))  ))^2    \right)^{1/2} \leq \delta. \]
	
	That is, the maps $\varphi$ which roughly look like the restriction of an orbit to $F$ for most vertices $v \in V$. For $\varepsilon>0$, we say $E \subset \Map(\rho, F,\delta,\sigma_i)$ is $\varepsilon$-separated if for every $\varphi_1,\varphi_2 \in E$ we have \[ \max_{v \in V_i} \rho(\varphi_1(v),\varphi_2(v)) \geq \varepsilon.  \]
	
	Let $M^{\varepsilon}_{\Sigma,\infty}(f,X,\Gamma,\rho,F,\delta,\sigma_i)$ denote the supremum over all $\varepsilon$-separated subsets $E$ of $\Map(\rho, F,\delta,\sigma_i)$ of the expression 
	\begin{equation}\label{eq:exp_pressure_formula_basic}
		\sum_{\varphi \in E} \exp\left(\sum_{ v \in V_i} f(\varphi(v))  \right).  
	\end{equation}
	
	Finally, let \[  P^{\varepsilon}_{\Sigma,\infty}(f,X,\Gamma,\rho,F,\delta,\sigma_i) = \frac{1}{|V_i|}\log\left(M^{\varepsilon}_{\Sigma,\infty}(f,X,\Gamma,\rho,F,\delta,\sigma_i)\right) \]
	
	\begin{definition}
		The \define{topological sofic pressure} of $\Gamma \curvearrowright X$ with respect to $f \colon X \to \RR$ and the sofic approximation sequence $\Sigma$ is given by
		\[ P_{\Sigma}(\Gamma \curvearrowright X,f) = \sup_{\varepsilon >0} \inf_{F\Subset \Gamma}\inf_{\delta>0} \limsup_{i \to \infty} P^{\varepsilon}_{\Sigma,\infty}(f,X,\Gamma,\rho,F,\delta,\sigma_i).    \]
	\end{definition}
	The value of the sofic pressure function at  the zero function $f =0 $ is called the \define{topological sofic entropy} of $\Gamma \curvearrowright X$ with respect to  the sofic approximation sequence $\Sigma$. We denote the topological sofic entropy by
	\[ h_{\Sigma}(\Gamma \curvearrowright X) = P_{\Sigma}(\Gamma \curvearrowright X,0).  \]
	The topological sofic entropy was defined by Kerr and Li~\cite{KerrLi2011} before the the definition of sofic pressure, which can be considered as a generalization of their notion.
	
	If $\mu$ is a $\Gamma$-equivariant measure on $X$ and $L$ is a finite subset of $C(X)$, we define $\Map_{\mu}(\rho, F,\delta,\sigma_i,L)$ as the set of $\varphi \in \Map(\rho, F,\delta,\sigma_i)$ which satisfy \[ \left\vert \frac{1}{|V_i|}\sum_{ v \in V_i} h(\varphi(v)) - \int_X h \dd\mu \right\vert \leq \delta, \mbox{ for every }h \in L. \]
	
	Similarly, we let $M^{\varepsilon}_{\Sigma,\infty,\mu}(f,X,\Gamma,\rho,F,\delta,\sigma_i,L)$ denote the supremum over all $\varepsilon$-separated subsets of $\Map_{\mu}(\rho, F,\delta,\sigma_i,L)$ of the same expression as above, and let \[  P^{\varepsilon}_{\Sigma,\infty,\mu}(f,X,\Gamma,\rho,F,\delta,L,\sigma_i) = \frac{1}{|V_i|}\log\left(M^{\varepsilon}_{\Sigma,\infty,\mu}(f,X,\Gamma,\rho,F,\delta,L,\sigma_i)\right). \]
	\begin{definition}
		The \define{measure-theoretic sofic pressure} of $\Gamma \curvearrowright (X,\mu)$ with respect to $f \colon X \to \RR$ and the sofic approximation sequence $\Sigma$ is given by
		\[ P_{\Sigma}(f,\Gamma \curvearrowright (X,\mu)) = \sup_{\varepsilon >0} \inf_{F\Subset \Gamma}\inf_{\delta>0}\inf_{L \Subset C(X)} \limsup_{i \to \infty} P^{\varepsilon}_{\Sigma,\infty,\mu}(f,X,\Gamma,\rho,F,\delta,L,\sigma_i).    \]
	\end{definition}
	The \define{measure-theoretic sofic entropy} of $\Gamma \curvearrowright (X,\mu)$ with respect to  the sofic approximation sequence $\Sigma$ is the measure-theoretic sofic pressure of $\Gamma \curvearrowright (X,\mu)$ at the zero function $f=0$. We denote the measure-theoretic sofic entropy by
	\[ h_{\Sigma}(\Gamma \curvearrowright X,\mu) = P_{\Sigma}(\Gamma \curvearrowright X,\mu,0).  \]
	
	A straightforward approximation argument shows that 
	\[ P_{\Sigma}(\Gamma \curvearrowright X,\mu,f) = h_{\Sigma}(\Gamma \curvearrowright X,\mu) + \int_X f \dd \mu.  \]
	It is consistent with the definition to declare that for $\mu \in \Prob(X)$ which is not $\Gamma$-invariant,
	\[ P_\Sigma(\Gamma \curvearrowright X,\mu,f) = -\infty.\] 
	
	It can sometimes be semantically and conceptually convenient to replace the parameters $F \Subset \Gamma$, $\delta >0$ and $L \Subset C(X)$ that appear in the above definition of topological and measure-theoretic sofic pressure by a single parameter. For $\sigma \colon \Gamma \to \Sym(V)$, $f \colon X\to \RR$, $\varepsilon>0$ and $U$ an open set of probability measures on $X^{\Gamma}$ we define \[ P(U,\sigma,\varepsilon,f) = \frac{1}{|V|} \log \left( \sup_{E \in \mathcal{E}(U,\sigma,\varepsilon)} \sum_{\varphi \in E }\exp\left(\sum_{v \in V}f(\varphi(v))   \right)     \right), \]
	where $\mathcal{E}(U,\sigma,\varepsilon)$ is the set of all collections $E$ of maps $\varphi \in X^{V_i}$ which satisfy that $\xi_{\sigma}(\varphi)\in U$ and which are $\varepsilon$-separated.
	
	Let $X^{\star} \subset X^{\Gamma}$ be the space of $\Gamma$-orbits, that is \[ X^{\star} = \{ y \in X^{\Gamma} : y_{g^{-1}} = (gy)_{1_{\Gamma}} \mbox{ for every } g \in \Gamma    \}.   \] 
	Let $\mu^{\star}$ be the pushforward measure on $X^{\star}$ induced by a measure $\mu$ on $X$ with respect to the map that assigns to each $x \in X$ its orbit under $\Gamma$. It is known that the definitions of sofic pressure (both topological and measure-theoretic) do not depend upon the choice of continuous pseudometric, as long as it is dynamically generating, see~\cite[Lemma 2.7]{chung_2013}, from this fact, a computation shows that we may rephrase the definitions above succinctly as
	
	\[ P_{\Sigma}(\Gamma \curvearrowright X,f) = \sup_{\varepsilon >0} P_{\Sigma}(\varepsilon,\Gamma \curvearrowright X,f),     \]
	and
	\[ P_{\Sigma}(f,\Gamma \curvearrowright X,\mu,f) = \sup_{\varepsilon >0}    P_{\Sigma}(\varepsilon,\Gamma \curvearrowright X,\mu,f). \]
	Where:
	
	\[ P_{\Sigma}(\varepsilon,\Gamma \curvearrowright X,f) \isdef \inf_{\delta >0} \limsup_{i \to \infty} P(\mathcal{N}_{\delta}(X^{\star}),\sigma_i,\varepsilon,f), \]
	and
	\[ P_{\Sigma}(\varepsilon,\Gamma \curvearrowright X,\mu,f) \isdef  \inf_{\delta >0} \limsup_{i \to \infty} P(\mathcal{N}_{\delta}(\mu^{\star}),\sigma_i,\varepsilon,f).    \]
	The above equivalent expressions have a number of advantages which will become evident in the symbolic setting. We state following simple observation (essentially  equivalent to Proposition $2.2$ of~\cite{chungZhang205weakExpansiveness}):
	\begin{proposition}\label{prop:exp_upper_semi_cont_enropy}
		For any $\varepsilon >0$ the function $H_{\varepsilon}\colon\Prob_{\Gamma}(X) \to \{-\infty\}\cup [0,+\infty)$ given by
		\[H_{\varepsilon}(\mu)= h_{\Sigma}(\varepsilon,\Gamma \curvearrowright X,\mu)\]
		is upper semi-continuous.
		
	\end{proposition}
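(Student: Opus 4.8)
The plan is to deduce the upper semicontinuity of $H_\varepsilon$ directly from two soft facts: the monotonicity of $P(\,\cdot\,,\sigma_i,\varepsilon,0)$ in its set-argument, and the continuity of the orbit pushforward $\mu\mapsto\mu^\star$. Since $X$ is compact metrizable and $\Gamma$ is countable, $\Prob_\Gamma(X)$ is metrizable, so it suffices to show that whenever $\mu_n\to\mu$ in $\Prob_\Gamma(X)$ one has $\limsup_n H_\varepsilon(\mu_n)\le H_\varepsilon(\mu)$. Recall that $H_\varepsilon(\mu)=\inf_{\delta>0}G_\delta(\mu)$, where I write $G_\delta(\mu)\isdef\limsup_{i\to\infty}P(\mathcal N_\delta(\mu^\star),\sigma_i,\varepsilon,0)$.

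First I would record two elementary facts. (i) The orbit map $X\to X^\Gamma$ is built coordinatewise from the homeomorphisms $x\mapsto gx$, hence is continuous; therefore its pushforward $\mu\mapsto\mu^\star$ is continuous for the weak-$*$ topologies, and in particular $d_P(\mu_n^\star,\mu^\star)\to 0$. (ii) For fixed $\sigma_i$ and $\varepsilon$, the quantity $P(U,\sigma_i,\varepsilon,0)$ is monotone in $U$: if $U\subset U'$ then $\mathcal E(U,\sigma_i,\varepsilon)\subseteq\mathcal E(U',\sigma_i,\varepsilon)$, because any $\varepsilon$-separated collection of maps with empirical distributions in $U$ also has them in $U'$; hence the supremum defining $P$ can only grow, giving $P(U,\sigma_i,\varepsilon,0)\le P(U',\sigma_i,\varepsilon,0)$.

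The key point, and the one place where care is needed, is that two balls $\mathcal N_\delta(\mu_n^\star)$ and $\mathcal N_\delta(\mu^\star)$ of the \emph{same} radius need not be nested; I would therefore exploit the outer infimum over $\delta$ by shrinking the radius on the $\mu_n$ side. Fix $\delta>0$ and choose any $\delta'\in(0,\delta)$. By (i), for all sufficiently large $n$ we have $d_P(\mu_n^\star,\mu^\star)\le\delta-\delta'$, whence the triangle inequality yields $\mathcal N_{\delta'}(\mu_n^\star)\subseteq\mathcal N_\delta(\mu^\star)$. Applying the monotonicity (ii) for every $i$ and then taking $\limsup_{i\to\infty}$ gives $G_{\delta'}(\mu_n)\le G_\delta(\mu)$. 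Since $H_\varepsilon(\mu_n)=\inf_{\delta''>0}G_{\delta''}(\mu_n)\le G_{\delta'}(\mu_n)$, we conclude $H_\varepsilon(\mu_n)\le G_\delta(\mu)$ for all large $n$, and hence $\limsup_n H_\varepsilon(\mu_n)\le G_\delta(\mu)$.

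Finally, taking the infimum over $\delta>0$ yields $\limsup_n H_\varepsilon(\mu_n)\le\inf_{\delta>0}G_\delta(\mu)=H_\varepsilon(\mu)$, which is exactly the asserted upper semicontinuity. I expect the only genuine subtlety to be the radius-shrinking step just described: one cannot argue that each individual $G_\delta$ is upper semicontinuous, and it is precisely the interaction between the outer $\inf_\delta$ and the monotonicity that saves the day. The continuity of $\mu\mapsto\mu^\star$ and the monotonicity of $P$ in $U$ are routine. I note also that the same argument applies verbatim to the pressure $P_\Sigma(\varepsilon,\Gamma\curvearrowright X,\mu,f)$ for a fixed $f\in C(X)$, since $f$ enters only through the fixed weights $\exp\bigl(\sum_{v}f(\varphi(v))\bigr)$ and does not affect the monotonicity in the set-argument.
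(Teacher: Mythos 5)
Your proof is correct and follows essentially the same route as the paper's (much terser) argument: both exploit that if $y>H_\varepsilon(\mu)$ is witnessed by some radius $\delta$, then for any $\nu$ with $\nu^\star$ close to $\mu^\star$ a smaller ball around $\nu^\star$ sits inside $\mathcal N_\delta(\mu^\star)$, and monotonicity of $P(\cdot,\sigma_i,\varepsilon,0)$ in the set argument does the rest. Your write-up simply makes explicit the radius-shrinking and the continuity of $\mu\mapsto\mu^\star$ that the paper leaves implicit.
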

	\begin{proof}
		Fix $\varepsilon >0$ and $\mu \in \Prob_\Gamma(X)$. Choose any $y > H_{\varepsilon}(\mu)$, then there exists
		$\delta >0$ such that $\limsup_{i \to \infty}P(\mathcal{N}_{\delta}(\mu^{\star}),\sigma_i,\varepsilon,0) < y$. It follows that for any 
		$\nu \in \mathcal{N}_{\delta}(\mu^{\star})$, $ H_{\varepsilon}(\nu) < y$.
	\end{proof}
	
	With all the notation above in place, we can provide short proof of the variational principle for sofic pressure. It seems that once we have have the definitions in place, the proof below is as short as Misiurewicz's proof of the variational principle for entropy of $\mathbb{Z}^d$-actions \cite{Misiurewicz_1975}, and arguably conceptually simpler. 
	
	\begin{theorem}[N.P. Chung's variational principle for sofic pressure \cite{chung_2013}]\label{thm:variational}
		Let $\Gamma \curvearrowright X$ be an action of a sofic group $\Gamma$ on a compact space $X$ and let $\Sigma$ be a sofic approximation sequence for $\Gamma$.
		For every $\varepsilon >0$ and  $f \in C(X)$ we have:
		\[
		P_{\Sigma}(\varepsilon,\Gamma \curvearrowright X,f) = \sup_{\mu \in \Prob_{\Gamma}(X)}P_{\Sigma}(\varepsilon,f,\Gamma \curvearrowright X,\mu,f).
		\]
		In particular,
		\[
		P_{\Sigma}(\Gamma \curvearrowright X,f) = \sup_{\mu \in \Prob_{\Gamma}(X)}P_{\Sigma}(\Gamma \curvearrowright X,\mu,f).
		\]
	\end{theorem}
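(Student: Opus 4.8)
The plan is to prove the displayed identity at a fixed $\varepsilon>0$ first; the final assertion then follows upon taking $\sup_{\varepsilon>0}$ of both sides, using the stated rephrasing $P_{\Sigma}(\Gamma\curvearrowright X,f)=\sup_{\varepsilon>0}P_{\Sigma}(\varepsilon,\Gamma\curvearrowright X,f)$ together with the free interchange $\sup_{\varepsilon}\sup_{\mu}=\sup_{\mu}\sup_{\varepsilon}$ and $\sup_{\varepsilon>0}P_{\Sigma}(\varepsilon,\Gamma\curvearrowright X,\mu,f)=P_{\Sigma}(\Gamma\curvearrowright X,\mu,f)$. Throughout I set $Y=X^{\star}$, which is closed and $\Gamma$-invariant, and I use that the orbit map identifies $\Prob_{\Gamma}(X)$ with $\Prob_{\Gamma}(X^{\star})$ via $\mu\mapsto\mu^{\star}$; in particular $\Prob_{\Gamma}(X^{\star})$ is a compact subset of $\Prob(X^{\Gamma})$. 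If $\Prob_{\Gamma}(X)=\varnothing$, then the first part of \Cref{lem:sofic_measures_accum} forces $\xi_{\sigma_i}^{-1}(\mathcal N_{\delta'}(X^{\star}))$ to be empty for large $i$, so for suitable $\delta'$ the family of admissible maps is empty, $P(\mathcal N_{\delta'}(X^{\star}),\sigma_i,\varepsilon,f)=-\infty$ eventually, and both sides of the identity equal $-\infty$; hence I may assume $\Prob_{\Gamma}(X)\neq\varnothing$.

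For the inequality $P_{\Sigma}(\varepsilon,\Gamma\curvearrowright X,f)\geq\sup_{\mu}P_{\Sigma}(\varepsilon,\Gamma\curvearrowright X,\mu,f)$ I argue by monotonicity. Since $\mu^{\star}$ is supported on $X^{\star}$, weak-$*$ proximity to $\mu^{\star}$ forces most mass to lie near $X^{\star}$: for every $\delta>0$ there is $\delta''>0$ with $\mathcal N_{\delta''}(\mu^{\star})\subseteq\mathcal N_{\delta}(X^{\star})$ (this holds for any compatible $d_{P}$, with a metric-dependent modulus). Enlarging the constraint on the empirical distribution only enlarges the family of admissible $\varepsilon$-separated maps, so $P(\mathcal N_{\delta''}(\mu^{\star}),\sigma_i,\varepsilon,f)\leq P(\mathcal N_{\delta}(X^{\star}),\sigma_i,\varepsilon,f)$. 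Taking $\limsup_{i}$ and then infima over the radii yields $P_{\Sigma}(\varepsilon,\Gamma\curvearrowright X,\mu,f)\leq P_{\Sigma}(\varepsilon,\Gamma\curvearrowright X,f)$, and then the supremum over $\mu$.

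The substantial direction is $P_{\Sigma}(\varepsilon,\Gamma\curvearrowright X,f)\leq\sup_{\mu}P_{\Sigma}(\varepsilon,\Gamma\curvearrowright X,\mu,f)$, and this is where \Cref{lem:sofic_measures_accum} enters. Fix any $y>\sup_{\mu}P_{\Sigma}(\varepsilon,\Gamma\curvearrowright X,\mu,f)$. For each $\mu\in\Prob_{\Gamma}(X)$ one has $\inf_{\delta>0}\limsup_{i}P(\mathcal N_{\delta}(\mu^{\star}),\sigma_i,\varepsilon,f)=P_{\Sigma}(\varepsilon,\Gamma\curvearrowright X,\mu,f)<y$, so I may choose a radius $\delta_{\mu}>0$ and a threshold $N_{\mu}$ with $P(\mathcal N_{\delta_{\mu}}(\mu^{\star}),\sigma_i,\varepsilon,f)<y$ for all $i\geq N_{\mu}$. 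The open balls $\{\nu:d_{P}(\mu^{\star},\nu)<\delta_{\mu}\}$ cover the compact set $\Prob_{\Gamma}(X^{\star})$, so finitely many, centred at $\mu_{1}^{\star},\dots,\mu_{k}^{\star}$, already cover it; call their union $O$. The proof of \Cref{lem:sofic_measures_accum} (asymptotic invariance of empirical measures, \Cref{prop:sofic_limit_measure_invariant}, combined with $\Prob_{\Gamma}(X^{\star})=\bigcap_{\delta,F}(\mathcal N_{\delta}(X^{\star})\cap\bigcap_{g\in F}P(\delta,g))$ and compactness) shows more generally that for any open $O\supseteq\Prob_{\Gamma}(X^{\star})$ there are $\delta'>0$ and $N$ with $\xi_{\sigma_i}(\varphi)\in O$ whenever $\xi_{\sigma_i}(\varphi)\in\mathcal N_{\delta'}(X^{\star})$ and $i\geq N$. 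Now fix $i\geq\max(N,N_{\mu_{1}},\dots,N_{\mu_{k}})$ and let $E$ be any $\varepsilon$-separated set with $\xi_{\sigma_i}(\varphi)\in\mathcal N_{\delta'}(X^{\star})$ for all $\varphi\in E$. Partition $E=\bigsqcup_{j}E_{j}$ by assigning each $\varphi$ to an index $j$ with $\xi_{\sigma_i}(\varphi)\in\mathcal N_{\delta_{\mu_j}}(\mu_{j}^{\star})$; since each $E_{j}$ is a legitimate competitor for $P(\mathcal N_{\delta_{\mu_j}}(\mu_{j}^{\star}),\sigma_i,\varepsilon,f)$, I obtain $\sum_{\varphi\in E}\exp\bigl(\sum_{v\in V_i}f(\varphi(v))\bigr)\leq\sum_{j=1}^{k}\exp\bigl(|V_i|\,P(\mathcal N_{\delta_{\mu_j}}(\mu_{j}^{\star}),\sigma_i,\varepsilon,f)\bigr)\leq k\exp(|V_i|\,y)$. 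Taking $\tfrac{1}{|V_i|}\log$, then $\limsup_{i}$ (so $\tfrac{\log k}{|V_i|}\to0$), gives $\limsup_{i}P(\mathcal N_{\delta'}(X^{\star}),\sigma_i,\varepsilon,f)\leq y$, whence $P_{\Sigma}(\varepsilon,\Gamma\curvearrowright X,f)\leq y$; letting $y$ decrease to the supremum completes the $\varepsilon$-level identity.

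The main obstacle is exactly the order of the quantifiers $\inf_{\delta}$ and $\sup_{\mu}$. Applying \Cref{lem:sofic_measures_accum} naively with a single uniform radius $\delta$ would only give $P_{\Sigma}(\varepsilon,\Gamma\curvearrowright X,f)\leq\inf_{\delta}\sup_{\mu}\limsup_{i}P(\mathcal N_{\delta}(\mu^{\star}),\sigma_i,\varepsilon,f)$, which in general dominates the target $\sup_{\mu}\inf_{\delta}(\cdots)$. I sidestep this minimax gap by choosing a radius $\delta_{\mu}$ per measure from the ``$<y$'' condition \emph{before} extracting a finite subcover, so that the bound $<y$ is available simultaneously at every centre $\mu_{j}^{\star}$. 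The alternative is to retain a uniform radius and close the gap by a Dini-type interchange, using the monotonicity of $\delta\mapsto\limsup_{i}P(\mathcal N_{\delta}(\mu^{\star}),\sigma_i,\varepsilon,f)$, the upper semicontinuity of $\mu\mapsto h_{\Sigma}(\varepsilon,\Gamma\curvearrowright X,\mu)$ from \Cref{prop:exp_upper_semi_cont_enropy}, and compactness of $\Prob_{\Gamma}(X)$; the per-measure radius argument above is the more direct route.
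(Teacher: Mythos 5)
Your proposal is correct, and its skeleton coincides with the paper's: the same reduction of the easy direction to the inclusion $N_{\delta'}(\mu^{\star})\subset N_{\delta}(X^{\star})$, the same treatment of the case $\Prob_{\Gamma}(X)=\varnothing$, and the same use of \Cref{lem:sofic_measures_accum} together with the union bound that produces the harmless $\tfrac{\log k}{|V_i|}$ term. Where you genuinely diverge is in how the $\inf_{\delta}$ versus $\sup_{\mu}$ quantifier mismatch is closed. The paper keeps the lemma's uniform radius $\delta$, runs it along a sequence $\delta_n\downarrow 0$ to extract one measure $\mu_n$ per scale, passes to a weak-$*$ limit $\mu$ of $(\mu_n)$, and uses that $N_{\delta_n}(\mu_n)\subset N_{\delta}(\mu)$ eventually; this yields the slightly stronger conclusion that a \emph{single} invariant measure $\mu$ satisfies $P_{\Sigma}(\varepsilon,\Gamma\curvearrowright X,f)\le P_{\Sigma}(\varepsilon,\Gamma\curvearrowright X,\mu,f)$, i.e.\ the supremum is attained. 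You instead fix $y$ strictly above the supremum, choose a radius $\delta_{\mu}$ per measure so that the bound $<y$ holds at every centre of the finite subcover simultaneously, and conclude without any limit extraction; the price is that you only bound the topological pressure by $y$ and hence by the supremum, without exhibiting a maximizing measure. Your appeal to the \emph{proof} of \Cref{lem:sofic_measures_accum} (rather than its statement) to cover an arbitrary open neighbourhood $O$ of $\Prob_{\Gamma}(X^{\star})$ is legitimate, since the compactness argument there is insensitive to the particular shape of the covering sets. Both routes are sound; yours is marginally more self-contained at the final step, the paper's gives attainment of the supremum for free.
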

	
	\begin{proof}
		Suppose that $\Prob_{\Gamma}(X) \neq \varnothing$ and fix $\mu \in \Prob_{\Gamma}(X)$. Let $\delta>0$ and choose $\delta'>0$ such that whenever $d_P(\mu^{\star},\nu)<\delta'$ then for $h \in C(X^{\Gamma})$ given by  $h(x) = d_T(x,X^{\star})$ we have \[ \left|\int_{X^{\Gamma}} h \dd \mu^{{\star}} -  \int_{X^{\Gamma}} h \dd \nu \right| < \delta^2.\]
		It follows that
		\[  \delta > \frac{1}{\delta}  \left|\int_{X^{\Gamma}} h \dd \mu^{{\star}} -  \int_{X^{\Gamma}} h \dd \nu \right| = \frac{1}{\delta}\left|\int_{X^{\Gamma}} h \dd \nu \right| \geq \nu(\{ x \in X^{\Gamma} : d_{T}(x,X^{\star})\geq\delta  \}).    \]
		Thus we conclude that for each $\delta>0$, there is $\delta'>0$ such that  $N_{\delta'}(\mu^{\star}) \subset N_{\delta}(X^{\star})$. It follows that for any $i \in \mathbb{N}$, $f \in C(X)$ and $\varepsilon>0$
		\[P(N_{\delta'}(\mu^{\star}),\sigma_i, \varepsilon,f) \le P(N_{\delta}(X^{\star}),\sigma_i,\varepsilon,f).\]
		This immediately yields the ``easy direction'' of the variational principle:
		\[ \sup_{\mu \in \Prob_{\Gamma}(X)}P_{\Sigma}(\varepsilon,\Gamma \curvearrowright X,\mu,f) \le P_{\Sigma}(\varepsilon,\Gamma \curvearrowright X,f).\]
		Notice that the above inequality is trivial in the case where $\Prob_{\Gamma}(X) = \varnothing$.
		
		In order to show the converse we apply~\Cref{lem:sofic_measures_accum}. The case where $\Prob_{\Gamma}(X)=\varnothing$ yields that $\xi_{\sigma_i}^{-1}({N_{\delta'}(X^{\star})})$ is empty for all large enough $i$, and thus $P_{\Sigma}(f,\Gamma \curvearrowright X)= -\infty$ and the variational principle holds. Suppose that $\Prob_{\Gamma}(X)\neq \varnothing$, then~\Cref{lem:sofic_measures_accum} yields that for any $\delta >0$ there exists $\delta'>0$, $N \in \NN$ and $\mu_1,\ldots,\mu_k \in \Prob_\Gamma(X^{\star})$ such that for any $i >N$, 
		\[
		\xi_{\sigma_i}^{-1}({N_{\delta'}(X^{\star})}) \subset \bigcup_{j=1}^k\xi_{\sigma_i}^{-1}(N_{\delta}(\mu_j)).
		\]
		It follows that for all $i>N$, $f \in C(X)$ and $\varepsilon>0$,
		\[
		P(N_{\delta'}(X),\sigma_i,\varepsilon,f) \le \frac{\log(k)}{|V_i|}+ \max_{1\le j \le k} P(N_\delta(\mu_j),\sigma_i,\varepsilon,f).
		\]
		Let $(\delta_n)_{n=1}^\infty$ be a decreasing sequence of positive numbers tending to $0$. From the above equation we conclude that there exists another decreasing sequence of positive numbers $(\delta'_n)_{n=1}^\infty$ and a sequence of measures $\mu_n \in \Prob_\Gamma(X^\star)$ such that for every $n \in \NN$
		\[
		\limsup_{i \to \infty} P(N_{\delta'_n}(X),\sigma_i,\varepsilon,f) \le  \limsup_{i \to \infty} P(N_{\delta_n}(\mu_n),\sigma_i,\varepsilon,f).
		\]
		Let $\mu \in \Prob_\Gamma(X^\star)$ be a weak-$*$ limit of of $(\mu_n)_{n=1}^\infty$. Then for any $\delta>0$ there exists $N$ such that  for all $n >N$ we have $N_{\delta_n}(\mu_n) \subset N_\delta(\mu)$.
		It follows that for any $\delta >0$ we have 
		\[
		\limsup_{i \to \infty} P(N_{\delta'_n}(X),\sigma_i,\varepsilon,f) \le \limsup_{i \to \infty} P(N_{\delta}(\mu),\sigma_i,\varepsilon,f),
		\]
		for all sufficiently large $n$.
		Taking $n \to \infty$ and then infimum over $\delta >0$, we conclude that
		\[
		P_{\Sigma}(\varepsilon,\Gamma \curvearrowright X,f) \le P_{\Sigma}(\varepsilon,\Gamma \curvearrowright X,\mu,f).
		\]
		In particular, we obtain the variational principle for sofic pressure.
	\end{proof}
	
	\begin{definition}
		We say that $\mu \in \Prob_{\Gamma}(X)$ is an \define{equilibrium measure} (also called \define{equilibrium state}) for $f \in C(X)$ with respect to $\Sigma$ if \[P_{\Sigma}(\Gamma \curvearrowright X,f ) = P_{\Sigma}(\Gamma \curvearrowright X,\mu,f ).\]
		That is, if it achieves the supremum in the variational principle. In the case $f=0$ we call $\mu$ a \define{measure of maximal entropy} with respect to $\Sigma$.
	\end{definition}

	Let $(W,\norm{\ })$ be a Banach space and denote by $W^*$ its continuous dual space. Given a convex function $F\colon W \to \RR$, we say that a linear functional $\psi\in W^*$ is a \define{tangent functional} (or \define{subgradient}) of $F$ at $w \in W$ if for every $u \in W$ we have \[ F(u)-F(w) \geq \psi(u-w).  \]

	Given a sofic approximation sequence for $\Gamma$, and a $\Gamma$-action
	$\Gamma \curvearrowright X$, we can consider the measure-theoretic entropy and the topological pressure as functions on $\Prob(X)$ and $C(X)$:
	\[
	H \isdef h_\Sigma(\Gamma \curvearrowright X,\cdot)\colon\Prob(X) \to \{-\infty\}\cup [0,+\infty]
	\]
	and
	\[
	\Pi \isdef P_\Sigma(\Gamma \curvearrowright X , \cdot)\colon C(X) \to [-\infty,+\infty]
	\]

	The variational principle is equivalent to the statement that the topological pressure function $\Pi$ is precisely equal to the Legendre transform (or  Legendre--Fenchel transform) of $-H$.

	The following properties of the topological pressure are either classical (at least in the amenable case) or trivial, some of them can be found in~\cite[Proposition 6.1]{chung_2013}. The straightforward verification of these properties extends verbatim to the case of actions of sofic groups.  We provide short self-contained proofs for completeness.
	
	\begin{proposition}\label{prop:tangent_functionals_eq_measures}
		Let $\Sigma$ be a sofic approximation sequence for $\Gamma$, and let 
		$\Gamma \curvearrowright X$ be a $\Gamma$-action such that $h_\Sigma(\Gamma \curvearrowright X)$ is finite. 
		Then the sofic topological pressure of $\Gamma \curvearrowright X$ with respect to $\Sigma$ is finite for every $f \in C(X)$, and the function  $\Pi \colon C(X)\to \RR$ given by $\Pi(f) = P_{\Sigma}(f,\Gamma\curvearrowright X)$ satisfies the following properties:
		\begin{enumerate}
			\item $\Pi$ is monotonically non-decreasing:  $\Pi(f) \ge \Pi(g)$ whenever $f-g \geq 0$.
			\item For every $c \in \mathbb{R}$ and $f \in C(X)$, $\Pi(f+c) = \Pi(f) + c$.
			\item $\Pi$ is $1$-Lipschitz with respect to the $\|\cdot\|_\infty$-norm on $C(X)$. In particular, it is continuous.
			\item $\Pi$ is well defined on $\Gamma$-cohomology classes. Namely, for every $f,f_0, \in C(X)$ and $g \in \Gamma$,
			\[\Pi(f_0 + f -f \circ g)= \Pi(f_0).\]
			In particular, $\Pi$ is $\Gamma$-invariant in the sense that $\Pi(f)=\Pi(f \circ g)$ for every $f \in C(X)$ and $g \in \Gamma$.
			\item $\Pi$ is a convex function.
			\item If $\mu \in \Prob_\Gamma(X)$ is an equilibrium state for $f_0 \in C(X)$ with respect to $\Sigma$, then $\mu$ is a tangent functional of $\Pi$ at $f_0$.
			\item Conversely, a tangent functional of $\Pi$ at any $f \in C(X)$,  is unital, positive and $\Gamma$-invariant.
			If the sofic measure-theoretic entropy is an upper semi-continuous and concave function on $\Prob(X)$, any tangent functional of $\Pi$ at any $f \in C(X)$ is furthermore an equilibrium state for $f_0 \in C(X)$ with respect to $\Sigma$.
		\end{enumerate}
	\end{proposition}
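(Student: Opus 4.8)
The plan is to reduce every item to the variational principle (\Cref{thm:variational}), which expresses
\[
\Pi(f)=\sup_{\mu\in\Prob_{\Gamma}(X)}\Big(h_{\Sigma}(\Gamma\curvearrowright X,\mu)+\int_X f\,\dd\mu\Big)
\]
as a supremum of functions of $f$ that are \emph{affine}. First I would record finiteness: since $h_{\Sigma}(\Gamma\curvearrowright X)$ is finite it is a nonnegative real number and $\Prob_{\Gamma}(X)\neq\varnothing$; then from $h_{\Sigma}(\Gamma\curvearrowright X,\mu)\le h_{\Sigma}(\Gamma\curvearrowright X)$ and $\bigl|\int_X f\,\dd\mu\bigr|\le\|f\|_\infty$ one gets $h_{\Sigma}(\Gamma\curvearrowright X)-\|f\|_\infty\le\Pi(f)\le h_{\Sigma}(\Gamma\curvearrowright X)+\|f\|_\infty$, so $\Pi(f)\in\RR$. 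Properties (1), (2), (4) and (5) then follow directly: monotonicity because $f\ge g$ forces $\int_X f\,\dd\mu\ge\int_X g\,\dd\mu$; the normalization $\Pi(f+c)=\Pi(f)+c$ because $\int_X(f+c)\,\dd\mu=\int_X f\,\dd\mu+c$; cohomology invariance because $\int_X(f-f\circ g)\,\dd\mu=0$ for $\mu\in\Prob_{\Gamma}(X)$, so each term in the supremum defining $\Pi(f_0+f-f\circ g)$ equals the corresponding term for $\Pi(f_0)$, with $\Pi(f)=\Pi(f\circ g)$ the special case $f_0=f\circ g$; and convexity because a supremum of affine functions is convex. Property (3) is then immediate from (1) and (2): if $\|f-g\|_\infty\le\eta$ then $g-\eta\le f\le g+\eta$, whence $\Pi(g)-\eta\le\Pi(f)\le\Pi(g)+\eta$.

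For (6), if $\mu$ is an equilibrium state for $f_0$ then $\Pi(f_0)=h_{\Sigma}(\Gamma\curvearrowright X,\mu)+\int_X f_0\,\dd\mu$, while the variational principle gives $\Pi(u)\ge h_{\Sigma}(\Gamma\curvearrowright X,\mu)+\int_X u\,\dd\mu$ for every $u\in C(X)$; subtracting yields $\Pi(u)-\Pi(f_0)\ge\int_X(u-f_0)\,\dd\mu$, which is exactly the tangency inequality for $\mu$ viewed inside $C(X)^*$.

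The first half of (7) I would extract from (1), (2) and (4) by probing the tangency inequality $\Pi(u)-\Pi(f)\ge\psi(u-f)$ with suitable perturbations of $f$. Testing $u=f+c\mathbbm{1}$ and using (2) gives $c\ge c\,\psi(\mathbbm{1})$ for all $c\in\RR$, hence $\psi(\mathbbm{1})=1$ (unital). Testing $u=f-th$ with $h\ge 0$, $t>0$ and using (1) gives $-t\,\psi(h)\le\Pi(f-th)-\Pi(f)\le 0$, hence $\psi(h)\ge 0$ (positive). Testing $u=f+t(h-h\circ g)$ and using (4), which forces $\Pi(f+t(h-h\circ g))=\Pi(f)$, gives $0\ge t\,\psi(h-h\circ g)$ for all $t\in\RR$, hence $\psi(h)=\psi(h\circ g)$ ($\Gamma$-invariant). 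A positive, unital functional on $C(X)$ is, by the Riesz representation theorem, integration against a Borel probability measure $\mu_\psi$, and $\Gamma$-invariance of $\psi$ means $\mu_\psi\in\Prob_{\Gamma}(X)$.

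The remaining claim --- that under upper semicontinuity and concavity of $H\isdef h_{\Sigma}(\Gamma\curvearrowright X,\cdot)$ every tangent functional at $f$ is an equilibrium state for $f$ --- is where the real work lies, and I expect it to be the main obstacle. I would phrase the variational principle as $\Pi=G^{*}$, the Legendre--Fenchel transform of $G\isdef -H$ (set to $+\infty$ off $\Prob_{\Gamma}(X)$); here $G$ is convex and lower semicontinuous precisely because $H$ is concave and upper semicontinuous, so the Fenchel--Moreau theorem yields $G^{**}=G$. The tangency inequality says $\mu_\psi\in\partial\Pi(f)=\partial G^{*}(f)$, and the Fenchel--Young equality (available thanks to $G^{**}=G$) upgrades this to $G(\mu_\psi)+\Pi(f)=\int_X f\,\dd\mu_\psi$, i.e. $\Pi(f)=H(\mu_\psi)+\int_X f\,\dd\mu_\psi=P_{\Sigma}(\Gamma\curvearrowright X,\mu_\psi,f)$, so $\mu_\psi$ attains the supremum in the variational principle and is an equilibrium state. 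The only delicate points are setting up the duality in the correct pairing $(C(X),\mathcal M(X))$ and checking that lower semicontinuity of $G$ indeed corresponds to the stated upper semicontinuity hypothesis on $H$; everything else is formal manipulation of the affine-supremum representation.
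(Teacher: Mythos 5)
Your proposal is correct and follows essentially the same route as the paper: finiteness and properties (4)--(7) are extracted from the variational principle exactly as in the paper's proof, including the Riesz-representation reading of a unital positive functional and the Fenchel--Moreau/biconjugate argument for the last part of (7). The only cosmetic difference is that you derive (1), (2) and (5) from the affine-supremum representation rather than directly from the finite-model quantities $P(U,\sigma,\varepsilon,f)$ (the paper notes its direct route for (1)--(3) avoids assuming finiteness, but under the stated hypothesis this is immaterial).
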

	
	\begin{proof}
		If $h_\Sigma(\Gamma \curvearrowright X)$ is finite, from the variational principle we have \[ \left\lvert P_{\Sigma}(f,\Gamma\curvearrowright X)\right\rvert \leq  h_{\Sigma}(\Gamma\curvearrowright X) + \sup_{\mu \in \Prob_{\Gamma}(X)}\left\lvert \int f\dd\mu \right\rvert \leq h_\Sigma(\Gamma \curvearrowright X) + \norm{f}_{\infty}. \]
		From this inequality it follows that $P_{\Sigma}(f,\Gamma\curvearrowright X)$ is finite for every $f \in C(X)$. Let us remark that in the proof of properties 1 -- 3 below we do not need to assume that $P_{\Sigma}(f,\Gamma\curvearrowright X) \in \mathbb{R}$. In fact, the finiteness of $P_{\Sigma}(f,\Gamma\curvearrowright X)$ also follows from the assumption that $h_\Sigma(\Gamma \curvearrowright X) \in \RR$ together with property $3$.
		\begin{enumerate}
			\item Monotonicity of $\Pi$ follows directly from monotonicity of $f \mapsto P(U,\sigma,\varepsilon,f)$.
			\item This follows directly from the fact that for any $E \subset X^{V_i}$, $f \in C(X)$ and $c \in \mathbb{R}$ we have
			\[\sum_{\varphi \in E} \exp\left(\sum_{ v \in V_i} f(\varphi(v))+c  \right) = e^{|V_i|c} \sum_{\varphi \in E} \exp\left(\sum_{ v \in V_i} f(\varphi(v))+c  \right),\]
			thus for any open set $U$ of probability measures on $X^{\Gamma}$,
			\[P(U,\sigma_i,\varepsilon,f+c)=P(U,\sigma_i,\varepsilon,f)+c.\]
			The result follows by taking $i \to \infty$, $\delta \to 0$ and $\varepsilon \to 0$.
			\item By the two previous properties, for $f_1,f_2 \in C(X)$
			\[  \Pi(f_1)-\Pi(f_2) \le \Pi(f_2 + \|f_1-f_2\|_
			\infty) - \Pi(f_2) = \|f_1 - f_2\|_\infty.\]
			Interchanging the roles of $f_1$ and $f_2$ we conclude that $|\Pi(f_1)-\Pi(f_2)| \le \|f_1 - f_2\|_\infty$.
			\item 
			Suppose $f,f_0, \in C(X)$ and $g\in \Gamma$.
			Since $\int f_0 \dd\mu = \int (f_0+f -f \circ g )\dd\mu$ for any $\mu \in \Prob_\Gamma(X)$, it follows that 
			\[P_\Sigma(\Gamma \curvearrowright X,\mu,f_0)=P_\Sigma(\Gamma \curvearrowright X,\mu,f_0+f -f \circ g )\]
			So by the variational principle 

			\[\Pi(f_0 +f -f\circ g) = \Pi(f_0).\]
			\item Convexity of $\Pi$ follows easily from H\"older's inequality (see~\cite[Proposition 6.1]{chung_2013}).  Alternatively it follows directly from the variational principle, which shows the value of $\Pi$ is a pointwise supremum of affine functions.
			\item Suppose that $\mu \in \Prob_{\Gamma}(X)$ is an equilibrium state for $f$ with respect to $\Sigma$.
			This means that 
			\[ \Pi(f_0) = h_\Sigma( \Gamma \curvearrowright X,\mu) + \int_X f_0 \dd\mu.\]
			On the other hand, by the variational principle we have that for any $f \in C(X)$
			\[ \Pi(f) \ge h_\Sigma( \Gamma \curvearrowright X,\mu) + \int_X f \dd\mu.\]
			It follows that for any $f \in C(X)$
			\[\Pi(f) - \Pi(f_0) \ge \int_X f- f_0 \dd\mu.\]
			This shows that $\mu$ is a tangent functional of $\Pi$ at $f_0$.
			\item Suppose that $\eta \in C(X)^*$ is a tangent functional of $\Pi$ at $f_0$.
			This means that for every $f \in C(X)$
			\[ \Pi(f)-\Pi(f_0) \geq  \eta(f-f_0).  \]
			In particular, set  $f=f_0 \pm 1$ and apply the property 2 to conclude that $\eta(1)=1$.
			Because $\Pi$ is monotone it follows that for any nonnegative $g \in C(X)$
			\[\Pi(f_0-g)-\Pi(f_0) \le 0,\]
			So $\eta(-g) \le 0$ so $\eta(g) \ge 0$. This shows $\eta$ corresponds to a positive measure.
			To see that $\eta$ is $\Gamma$-invariant, use the fact that $\Pi(f_0 + f -f\circ g)=\Pi(f_0)$. 
			
			Now further assume that $H=h_\Sigma(\Gamma \curvearrowright X,\cdot)\colon \Prob(X) \to \mathbb{R}$ is upper semi-continuous and concave.
			By hypothesis we have that $h_{\Sigma}(\Gamma \curvearrowright X)$ is finite and thus $-H$ is proper. It follows by the Fenchel--Moreau theorem that $-H$ is equal on $\Prob(X)$ to the Legendre transform of its Legendre transform, which is the sofic topological pressure function $\Pi$.
			This means that  
			\[h_\Sigma(\Gamma \curvearrowright X,\eta) = \inf_{f \in C(X)} P_\Sigma(\Gamma \curvearrowright X,f)- \eta(f).\]
			In particular, 
			\[h_\Sigma(\Gamma \curvearrowright X,\eta) \ge P_\Sigma(\Gamma \curvearrowright X,f_0)- \eta(f_0).\]
			So 
			\[P_\Sigma(\Gamma \curvearrowright X,\eta,f_0) \ge P_\Sigma(\Gamma \curvearrowright X,f_0).\]
			By the variational principle the above inequality is in fact an equality and thus $\eta$ is an equilibrium measure for $f_0$ with respect to $\Sigma$.
	\end{enumerate}\end{proof}
	
	By the previous proposition, whenever the measure-theoretic sofic entropy is an upper semi-continuous and concave function of the measure,  equilibrium measures are precisely the tangent functionals of the convex continuous function $\Pi$. Let us discuss some sufficient conditions for these hypotheses to hold.
	
	With regards to concavity of  measure-theoretic sofic entropy:
	It is well known that when $\Gamma$ is amenable, the measure-theoretic entropy $H$, when restricted to the invariant measures $\Prob_\Gamma(X)$, is an affine function. Recall that $H(\mu)= -\infty$ for $\mu \not\in \Prob_\Gamma(X)$, so for any action of an amenable group $\Gamma$, we have that $-H= -h_\Sigma(\Gamma \curvearrowright X,\cdot)$ is convex. When $\Gamma$ is non-amenable, it can happen that $\mu_1,\mu_2 \in \Prob_\Gamma(X)$ satisfy $h_\Sigma(\Gamma \curvearrowright X,\mu_i)\ge 0$ for $i=1,2$ but $h_\Sigma(\Gamma \curvearrowright X,\frac{1}{2}\mu_1+\frac{1}{2}\mu_2) = -\infty$, see for instance Proposition 3.1 of~\cite{Bowen_2020}. 

	With regards to upper semi-continuity of  the measure-theoretic sofic entropy, we now recall that \define{expansiveness} is a sufficient condition.
	Recall that an action $\Gamma \curvearrowright X$ is \define{expansive} if there exists $\varepsilon>0$ such that for any $x_1 \ne x_2$ in $X$ there exists $g \in \Gamma$ such that the distance between $gx_1$ and $gx_2$ is at least $\varepsilon$. A positive constant $\varepsilon>0$ as above is called an \define{expansive constant} for $\Gamma \curvearrowright X$. Whether a particular $\varepsilon>0$ is an expansive constant depends on the choice of metric, but the existence of an expansive constant does not.     
	If $\Gamma \curvearrowright X$ is expansive then with respect to any sofic approximation the measure-theoretic sofic entropy is an upper semi-continuous function of the measure, see Theorem $2.1$ of \cite{chungZhang205weakExpansiveness}.
	This is result can be obtained as a consequence of  the fact that when  $\Gamma \curvearrowright X$ is expansive then there exists $\varepsilon >0$ such that
	for all $f \in C(X)$ and $\mu \in \Prob_{\Gamma}(X)$
	\[P_{\Sigma}(\Gamma \curvearrowright X,\mu,f )=P_{\Sigma}(\varepsilon,\Gamma \curvearrowright X,\mu,f ).\]
	Upper semi-continuity of the measure-theoretic entropy for expansive actions now follows using \cref{prop:exp_upper_semi_cont_enropy}.

	\subsection{Sofic pressure for shift spaces}
	
	As discussed above, the sofic topological  pressure with respect to $\Sigma$ does not depend upon the choice of a continuous dynamically generating pseudometric. In particular, for a subshift $X \subset A^{\Gamma}$ we may choose $\rho \colon X \times X \to \RR$ given by \[ \rho(x,y) = \begin{cases}
		0 & \mbox{ if } x(1_{\Gamma}) = y(1_{\Gamma}),\\
		1 & \mbox{ if } x(1_{\Gamma}) \neq y(1_{\Gamma}).
	\end{cases}   \]
	
	It is clear that with this pseudometric, the notion of being $\varepsilon$-separated is exactly the same for every value $\varepsilon<1$. Namely, it is equivalent to the statement that for every $\varphi_1,\varphi_2 \in X^{V_i}$ there is $v \in V_i$ such that $\varphi_1(v)_{1_{\Gamma}} \neq \varphi_2(v)_{1_{\Gamma}}$. Thus for subshifts both definitions simplify to
	
	\[ P_{\Sigma}(f,\Gamma \curvearrowright X) = \inf_{\delta>0} \limsup_{i \to \infty} \frac{1}{|V_i|}\log\left(P(f,N_{\delta}(X^*),\sigma_i,\tfrac{1}{2})\right). \]
	\[ P_{\Sigma}(f,\Gamma \curvearrowright (X,\mu)) = \inf_{\delta>0} \limsup_{i \to \infty} \frac{1}{|V_i|}\log\left(P(f,N_{\delta}(\mu^*),\sigma_i,\tfrac{1}{2})\right). \]
	
	We can further simplify the formulae for pressure by replacing $X^{V_i}$ by the finite set $A^{V_i}$ and writing for $U$ an open set of probability measures on $X$ \[  P(f,U,\sigma) = \frac{1}{|V|}\log \left( \sum_{w \in A^{v_i} : \xi_{\sigma}(w) \in U} \exp\left( \sum_{v \in V} f( \xi_{\sigma,v}(w)  ) \right) \right).   \]
	
	The advantage of the previous definition is that we no longer need to consider a supremum over $\varepsilon$-separated sets nor make computations on the space of orbits $(X^{\star},\mu^{\star})$. It is clear that any $f \in C(X)$ and $\mu \in \Prob_{\Gamma}(X)$ we have
	
	\[ P_{\Sigma}(f,\Gamma \curvearrowright X) = \inf_{\delta>0} \limsup_{i \to \infty} P(f,N_{\delta}(X),\sigma_i). \]
	\[ P_{\Sigma}(f,\Gamma \curvearrowright (X,\mu)) = \inf_{\delta>0} \limsup_{i \to \infty} P(f,N_{\delta}(\mu),\sigma_i). \]
	
	For the second equality, we use the fact that for every $w \in A^{V}$ we have,
	\begin{equation}\label{eq:empric_intergal}
		\sum_{v \in V}f(\xi_{\sigma,v}(w))  = |V| \int_X f\  \dd\left(\xi_{\sigma}(w)\right).
	\end{equation}
	
	For convenience in what follows, if $W$ is a set of probability measures on $A^\Gamma$ define:
	\[ P_{\Sigma}(f,W,\Gamma \curvearrowright X) = \inf_{U \supset W}\limsup_{i \to \infty} P(f,U,\sigma_i),\]
	where the infimum is over all open sets of probability measures $U$ that contain $W$.
	
	\section{Proof of the sofic Lanford--Ruelle theorem for locally constant functions}\label{sec:prooflocal}
	
	For the remainder of this section, we fix a subshift $X \subset A^{\Gamma}$. Recall that we denote by $\cT(X)$ the asymptotic relation of $X$ and by $\cT^0(X)$ the \'{e}tale asymptotic relation of $X$.
	
	For $\delta >0$, a Borel subset $A \subset X$ and $t \in [0,1]$ let
	\[
	N_{\delta}(A,t) \isdef\left\{ \mu \in N_\delta(X) : |\mu(A) - t| \leq \delta\right\}.
	\]
	and for $x,y \in X$, $F \Subset \Gamma$ and $r \geq 0$,
	\[
	N_\delta\left[ (x,y),t,r,F\right] \isdef \left\{ \mu \in N_\delta([x]_F \cup [y]_F,t) : \left| \frac{\mu([y]_{F})}{\mu([x]_{F})} - r \right| \leq \delta
	\right\}.
	\] 
	
	For $r\geq 0$, $C \in \mathbb{R}$ let $r^* = \frac{r}{1+r} \in [0,1)$ and
	\[
	\hat p(r,C) \isdef H(r^*) + r^*C
	\]
	where $H\colon [0,1]\to \RR$ is defined by $H(r^*) = -r^*\log(r^*)-(1-r^*)\log(1-r^*)$ with the usual convention that $0 \log(0) = 0$.
	Note that for fixed $C \in \mathbb{R}$, $\hat{p}(r,C)$ admits a unique global (and local) maximum which is attained at $r=\exp(C)$.
	
	\begin{definition}
		Let $F_1, F_2 \Subset \Gamma$. We say that a subset $X_0 \subset A^{\Gamma}$ has \define{trivial $F_1$-overlaps within $F_2$} if %
		\[(gy)|_{F_2 \cap g F_2}\ne x|_{F_2 \cap gF_2} \mbox{ whenever } g \in F_2F_1^{-1}\setminus \{1_\Gamma\} \mbox{ and }x,y \in X_0.\]
	\end{definition}
	
	Suppose that $F_1 \subset F_2$. In this case the condition above is just a local way to state that for any $z \in A^{\Gamma}$ and $g \in \Gamma\setminus \{1_{\Gamma}\}$ such that $z|_{F_2}=x|_{F_2}$ and $(gz)|_{F_2}=y|_{F_2}$, we must necessarily have that $F_2 \cap gF_1 = \varnothing$. In other words, the patterns $x|_{F_2}$ and $y|_{F_2}$ can only occur in such a way that the ``$F_1$-center'' of $y$ does not intersect the ``$F_2$-center'' of $x$. Next we will show that this condition provides a way to define an endomorphism of $X$ which replaces all occurrences of a pattern $y|_{F_2}$ by another pattern $x|_{F_2}$ whenever they are exchangeable and $x|_{F_2\setminus F_1} = y|_{F_2\setminus F_1}$.
	
	\begin{lemma}\label{lem:pattern_exchange}
		Let $F_1\subset F_2 \Subset \Gamma$, $(x,y)\in \cT^{0}_{F_1}(X)$ and suppose that $\{x,y\}$ has trivial $F_1$-overlaps within $F_2$. There exists a continuous $\Gamma$-equivariant map $\pi \colon A^\Gamma \to A^\Gamma$ such that $\pi(X)\subset X$ and for every $z \in A^{\Gamma}$,
		\begin{enumerate}
			\item If $z|_{F_2}\in \{x|_{F_2}, y|_{F_2}\}$, then $\pi(z)|_{F_2} = x|_{F_2}$.
			\item $\pi(z)|_{F_2} \neq y|_{F_2}$.
		\end{enumerate}
	\end{lemma}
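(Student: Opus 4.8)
The plan is to realize $\pi$ as a sliding-block code that detects every occurrence of the pattern $y|_{F_2}$ and overwrites its $F_1$-core by $x|_{F_1}$; since $(x,y)\in\cT^0_{F_1}(X)$ the configurations $x,y$ agree off $F_1$, so this rewrite changes nothing outside the detected cores. Concretely, I would set $C(z)\isdef\{g\in\Gamma:(g^{-1}z)|_{F_2}=y|_{F_2}\}$ and define $\pi(z)(h)\isdef x(g^{-1}h)$ if $h\in gF_1$ for some $g\in C(z)$, and $\pi(z)(h)\isdef z(h)$ otherwise. The value $\pi(z)(h)$ depends only on $z|_{hF_1^{-1}F_2}$, a finite window, so once well-definedness is checked $\pi$ is continuous; and $C(gz)=gC(z)$ makes $\pi$ be $\Gamma$-equivariant. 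I will assume $x\neq y$ throughout, since for $x=y$ the map is the identity and property (2) is vacuous or false.

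The structural heart of the argument is a disjointness estimate: for distinct $g,g'\in C(z)$ one has $g'F_1\cap gF_2=\varnothing$. To see this, suppose $h\in g'F_1\cap gF_2$; then $k\isdef g^{-1}g'\in F_2F_1^{-1}\setminus\{1_\Gamma\}$, and comparing the two occurrences $z|_{gF_2}=g\cdot y|_{F_2}$ and $z|_{g'F_2}=g'\cdot y|_{F_2}$ on their overlap yields $(ky)|_{F_2\cap kF_2}=y|_{F_2\cap kF_2}$, contradicting the trivial-overlaps hypothesis applied with both patterns equal to $y$. In particular the cores $\{gF_1:g\in C(z)\}$ are pairwise disjoint, so $\pi$ is well defined, and moreover within any single translate $gF_2$ the only rewriting occurs on $gF_1$.

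To obtain $\pi(X)\subseteq X$, I would use that $x|_{F_1}$ and $y|_{F_1}$ are interchangeable (as $(x,y)\in\cT^0_{F_1}(X)$), hence so is the translated pair $g\cdot x|_{F_1},\,g\cdot y|_{F_1}$ for every $g$. Since the rewritten cores are pairwise disjoint, for $z\in X$ I can change one core at a time using the interchange $I_{x|_{F_1},y|_{F_1}}$ translated to $g$; each such single change preserves membership in $X$ by the definition of interchangeability together with $\Gamma$-invariance, and $\pi(z)$ is the product-topology limit of these finite-stage configurations, which lies in $X$ because $X$ is closed.

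Finally I would verify the two itemized properties. For (1), if $z|_{F_2}=y|_{F_2}$ then $1_\Gamma\in C(z)$, the disjointness estimate shows no other element of $C(z)$ rewrites anything in $F_2$, and off $F_1$ we have $y=x$, so $\pi(z)|_{F_2}=x|_{F_2}$; the case $z|_{F_2}=x|_{F_2}$ is handled by ruling out any rewrite in $F_2$ via trivial overlaps applied to the mixed pair $(x,y)$. The hard part will be (2). Assuming $\pi(z)|_{F_2}=y|_{F_2}$, there must be a rewrite inside $F_2$, say by some $g_0\in C(z)$ with $g_0\in F_2F_1^{-1}\setminus\{1_\Gamma\}$; the disjointness estimate then forces $(g_0^{-1}\pi(z))|_{F_2}=x|_{F_2}$, because inside $g_0F_2$ the only rewrite is on $g_0F_1$ and off $F_1$ one has $y=x$. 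Thus the single configuration $\pi(z)$ would display $y|_{F_2}$ at $1_\Gamma$ and $x|_{F_2}$ at $g_0$; reading off their forced agreement on the nonempty overlap $F_2\cap g_0F_2$ gives $(g_0x)|_{F_2\cap g_0F_2}=y|_{F_2\cap g_0F_2}$, which contradicts the trivial-overlaps hypothesis applied to the mixed pair. The main obstacle, and the place where the hypothesis is used in full strength, is precisely this step: controlling the possible interference of several overlapping rewrites so that a rewritten occurrence becomes a genuine $x|_{F_2}$-pattern — this is exactly what the disjointness estimate, and hence the trivial-overlaps condition on $F_2F_1^{-1}$, is engineered to guarantee.
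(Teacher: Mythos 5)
Your proof is correct and follows essentially the same route as the paper's: you define the identical sliding-block map (overwrite the $F_1$-core of each detected occurrence of $y|_{F_2}$), use the trivial-overlaps hypothesis to get the same disjointness/well-definedness statement, verify (1) and (2) by the same overlap contradictions, and obtain $\pi(X)\subset X$ by exhausting the disjoint cores with interchange maps and using closedness of $X$. Your explicit flagging of the degenerate case $x=y$ is a minor point of extra care not spelled out in the paper.
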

	
	\begin{proof}
		Since $(x,y) \in \cT^0_{F_1}(X)$, it follows that $x|_{F_2 \setminus F_1} = y|_{F_2 \setminus F_1}$. Let us define $\pi\colon A^\Gamma \to A^\Gamma$ as follows. For $z \in A^{\Gamma}$,
		
		\[  \pi(z)(g) = \begin{cases}
			x(h) & \mbox{ if there is } h \in F_1 \mbox{ such that } (hg^{-1}z)|_{F_2} = y|_{F_2}, \\
			z(g) & \mbox{ otherwise. }
		\end{cases}  \]
		
		Let us first show that $\pi$ is well defined. Let $z \in A^{\Gamma}$ and suppose there is $g \in \Gamma$ and $h,h'\in F_1$ such that $(hg^{-1}z)|_{F_2} = y|_{F_2} = (h'g^{-1}z)|_{F_2}$. Let $z' = h'g^{-1}z$ and $\gamma = h'h^{-1}$, then we have $(\gamma^{-1}z')|_{F_2} = z'|_{F_2} = y|_{F_2}$. From $F_1 \subset F_2$ we get that $\gamma\in F_1F_1^{-1}\subset F_2F_2^{-2}$, while from the previous relation we get that $y|_{F_2 \cap \gamma F_2} = (\gamma y)|_{F_2 \cap \gamma F_2}$. As $\{x,y\}$ (and in particular $\{y\}$) has trivial $F_1$-overlaps within $F_2$ we conclude that $\gamma = 1_{\Gamma}$ and thus $h =h'$.
		
		Now let $z \in A^{\Gamma}$. If $z|_{F_2} \in \{x|_{F_2},y|_{F_2}\}$ it follows by definition that $\pi(z)|_{F_1} = x|_{F_1}$. Now let $g \in F_2 \setminus F_1$, if there were $h \in F_1$ such that $(hg^{-1}z)|_{F_2} = y|_{F_2}$, we would have that $(gh^{-1}y)|_{F_2 \cap gh^{-1}F_2} \in \{ x|_{F_2 \cap gh^{-1}F_2},y|_{F_2 \cap gh^{-1}F_2}\}$ which cannot hold as $gh^{-1}\in F_2F_1^{-1}\setminus \{1_{\Gamma}\}$ and $\{x,y\}$ has trivial $F_1$-overlaps within $F_2$. From the argument above we deduce that we must have $\pi(z)(g) = z(g)$ for $g\in F_2\setminus F_1$ and thus $\pi(z)|_{F_2} = x|_{F_2}$.
		
		Now suppose $\pi(z)|_{F_2}=y|_{F_2}$. By the previous argument, we couldn't have had $z|_{F_2} = y|_{F_2}$ and thus there must be $g \in F_2$ such that $\pi(z)(g)\neq z(g)$. This implies that there is $h \in F_1$ such that $(hg^{-1}z)|_{F_2} = y|_{F_2}$ and thus $(hg^{-1}\pi(z))|_{F_2} = x|_{F_2}$. This again cannot occur as $gh^{-1}\in F_2F_1^{-1}$ and $\{x,y\}$ has trivial $F_1$-overlaps within $F_2$.
		
		It is clear from the definition that $\pi$ is continuous and $\Gamma$-equivariant. Now let $z \in X$. Since $(x,y) \in \cT^0_{F_1}(X)$ and $F_1\subset F_2$ it follows that $\xi \colon A^{\Gamma} \to A^{\Gamma}$ given by \[ \xi(z) = \begin{cases} z|_{\Gamma \setminus F_2} \vee x|_{F_2} & \mbox{ if } z|_{F_2} = y|_{F_2}\\
			z & \mbox{ otherwise},
		\end{cases}\]
		is a continuous self-map on $X$. It is clear that the fact that $\{x,y\}$ has trivial $F_1$-overlaps within $F_2$ implies that the collection of maps $\{g^{-1} \xi g\}_{g \in \Gamma}$ pairwise commute. Thus for any finite finite $F \subset \Gamma$ the composition $\xi_F := \prod_{g \in F}g^{-1} \xi g$ is a well defined self-map of $X$ (independent of the order of composition). For every $x \in X$ and any finite $F \subset \Gamma$, $\pi(x)|_F$ coincides with $\xi_{F'}(x)|_F$ for sufficiently big $F'$. From here it follows that $\pi(X)\subset X$.
	\end{proof}

	Let us recall that a function $f\colon X \to \RR$ is said to be $F$-locally constant if $f(x)=f(y)$ for every $x,y \in X$ such that $x|_F = y|_F$. In the following lemma we shall use the usual little $o$ notation $o(g(n))$ to denote a function which goes to zero as $n$ goes to infinity when divided by $g(n)$.

	\begin{lemma}\label{lem:opt_ratio}
		Let $F_1\subset F_2 \Subset \Gamma$ such that $F_1F_1^{-1}\subset F_2$, $(x,y) \in \cT^0_{F_1}(X)$, $t \in [0,1]$ and $f\colon X \to \mathbb{R}$ an $F_1$-locally constant function.
		Suppose that $\{x,y\}$ has trivial $F_1$-overlaps within $F_2$ and that for all $\delta >0$ there are infinitely many $i \in \NN$ such that
		\[
		\xi_{\sigma_i}^{-1}(N_\delta([x]_{F_2} \cup [y]_{F_2},t)) \ne \varnothing.
		\]
		
		Then for all $r_1,r_2\geq 0$ we have
		\begin{align}
			\inf_{\delta >0}
			\limsup_{i \to \infty}
			\left(P(f,N_\delta\left[(x,y),t,r_1,F_2\right],\sigma_i)- P(f,N_\delta\left[(x,y),t,r_2,F_2\right],\sigma_i) \right)\\ 
			= t\left( \hat p (r_1,\Psi_f(x,y)) - \hat p (r_2,\Psi_f(x,y))   \right). 
		\end{align}
	\end{lemma}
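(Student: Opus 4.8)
The plan is to compute the constrained partition sum $P(f,N_\delta[(x,y),t,r,F_2],\sigma_i)$ explicitly for each fixed $r$ and to show it splits as a universal ``background'' term plus $t\,\hat p(r,\Psi_f(x,y))$, so that the background cancels in the difference. First I would fix $\sigma_i$ and restrict attention to the set $V_i'$ of vertices that are $F$-good for a sufficiently large $F\Subset\Gamma$; by the sofic condition $|V_i\setminus V_i'|=o(|V_i|)$, so this restriction perturbs the estimates only by an additive $o(1)$ after dividing by $|V_i|$. For $w\in A^{V_i}$ call a good vertex $v$ \emph{active} if $\xi_{\sigma_i,v}(w)|_{F_2}\in\{x|_{F_2},y|_{F_2}\}$, and call it an $x$-slot or a $y$-slot accordingly. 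Since $[x]_{F_2}$ and $[y]_{F_2}$ are cylinders, $\xi_{\sigma_i}(w)\in N_\delta[(x,y),t,r,F_2]$ translates into: the number of $x$-slots plus $y$-slots is $(t\pm O(\delta))|V_i|$, and the ratio (number of $y$-slots)/(number of $x$-slots) lies within $\delta$ of $r$.

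The combinatorial heart is an independence statement extracted exactly as in \Cref{lem:pattern_exchange}. Because $(x,y)\in\cT^0_{F_1}(X)$ we have $x|_{F_2\setminus F_1}=y|_{F_2\setminus F_1}$, so at an active slot $v$ the only freedom is the $F_1$-center, and toggling it between $x|_{F_1}$ and $y|_{F_1}$ is the involution $I_{x|_{F_1},y|_{F_1}}$ transported to $v$. The trivial $F_1$-overlaps within $F_2$ condition guarantees that, among good vertices, two distinct active slots never lie within $\sigma_i(F_2F_1^{-1})$ of one another; hence toggling one active center neither destroys nor creates activity elsewhere, and the datum consisting of the active-slot set together with the restriction of $w$ away from the active centers --- call it the \emph{background} $\beta$ --- is invariant under all such toggles. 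Thus the configurations sharing a background $\beta$ with $k=k(\beta)$ active slots are in bijection with the $2^k$ assignments of a type in $\{x,y\}$ to each slot. Using that $f$ is $F_1$-locally constant, the equivariance $\xi_{\sigma_i,\sigma_i(g)v}(w)=g\,\xi_{\sigma_i,v}(w)$ at good vertices, and $F_1F_1^{-1}\subseteq F_2$, I would verify that toggling a single slot from $x$-type to $y$-type changes $\sum_v f(\xi_{\sigma_i,v}(w))$ by exactly $\Psi_f(x,y)$ (only the $O(1)$ vertices in $\sigma_i(F_1F_1^{-1})v$ are affected, and their net change is $\sum_{g\in F_1F_1^{-1}}(f(gy)-f(gx))=\Psi_f(x,y)$). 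So, writing $C=\Psi_f(x,y)$ and $W_0(\beta)$ for the weight $\sum_v f$ with all slots set to $x$-type, a background with $k$ active slots and $m$ declared $y$-type contributes $\binom{k}{m}\exp(W_0(\beta)+mC)$.

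With this decomposition the partition sum becomes, up to the $o(|V_i|)$ from non-good vertices and the $O(\delta|V_i|)$ from the width of the neighbourhood,
\begin{equation*}
\sum_{w\in N_\delta[(x,y),t,r,F_2]}\!\!\!\exp\Big(\sum_v f(\xi_{\sigma_i,v}(w))\Big)
\;\approx\;\sum_{\beta:\,k(\beta)\approx t|V_i|}\exp(W_0(\beta))\sum_{m\approx r^*k(\beta)}\binom{k(\beta)}{m}\exp(mC),
\end{equation*}
where $r^*=r/(1+r)$ and the inner sum ranges over $m$ with $|m/(k-m)-r|\le\delta$. Stirling's formula gives $\binom{k}{m}\exp(mC)=\exp\big(k(H(m/k)+(m/k)C)+o(k)\big)$, and since $m/k\approx r^*$ the inner sum is $\exp\big(k(\beta)\,\hat p(r,C)+o(|V_i|)\big)$. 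Dividing by $|V_i|$ and taking logarithms I obtain $P(f,N_\delta[(x,y),t,r,F_2],\sigma_i)=t\,\hat p(r,C)+Q_i(\delta)+E_i(r,\delta)$, where the \emph{background pressure} $Q_i(\delta)=\tfrac{1}{|V_i|}\log\sum_{\beta:\,k(\beta)\approx t|V_i|}\exp(W_0(\beta))$ depends only on the admissible backgrounds --- pinned down by the total-mass constraint $k(\beta)\approx t|V_i|$ and hence the \emph{same} for $r_1$ and $r_2$ --- while $|E_i(r,\delta)|\le c\delta+o_i(1)$. The nonemptiness hypothesis ensures these sums are genuinely nonempty for infinitely many $i$, so no spurious $-\infty$ arises.

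Finally I would subtract the two instances: for each fixed $\delta$ and each large $i$ the term $Q_i(\delta)$ cancels, leaving
\[
P(f,N_\delta[(x,y),t,r_1,F_2],\sigma_i)-P(f,N_\delta[(x,y),t,r_2,F_2],\sigma_i)=t\big(\hat p(r_1,C)-\hat p(r_2,C)\big)+E_i(r_1,\delta)-E_i(r_2,\delta).
\]
Taking $\limsup_{i\to\infty}$ bounds the error by $2c\delta$, and then $\inf_{\delta>0}$ removes it, yielding the claimed identity. I expect the main obstacle to be the rigorous bookkeeping behind the phrase ``toggling one slot changes the weight by exactly $\Psi_f(x,y)$ and leaves the background invariant'': this is where the trivial-overlaps condition and the inclusion $F_1F_1^{-1}\subseteq F_2$ must be used with care, uniformly over good vertices, and where one must check that interchangeability of $x|_{F_1}$ and $y|_{F_1}$ keeps the empirical distribution inside $N_\delta(X)$ so that the same family of backgrounds is available for both ratios. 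The remaining estimates --- bounding the non-good vertices and applying Stirling --- are routine.
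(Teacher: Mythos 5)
Your proposal is correct and follows essentially the same route as the paper: your ``background'' is exactly the image $w'=\pi_i(w)$ under the pattern-replacement map of \Cref{lem:pattern_exchange}, your per-slot toggle computation reproduces the paper's estimate of $S_i(f,w)-S_i(f,\pi_i(w))$, and your inner binomial sum is the paper's fiber count $K_{w'}(\delta,r)$, with the same Stirling estimate and the same cancellation of the $r$-independent background pressure in the difference. The subtlety you flag at the end (that toggles must keep the empirical distribution in $N_\delta(X)$, using interchangeability at well-separated slots) is precisely the point the paper handles via the map $\pi_i$ and the two-sided bounds on $K_{w'}(\delta,r)$.
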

	
	\begin{proof}
		Let $F_1,F_2 \Subset \Gamma$, $(x,y) \in \cT^0_{F_1}(X)$, $t \in [0,1]$ and $f\colon X \to \mathbb{R}$ as in the statement.~\Cref{lem:pattern_exchange} provides the existence of a continuous $\Gamma$-equivariant map $\pi\colon A^{\Gamma} \to A^{\Gamma}$ such that $\pi(X)\subset X$ and which replaces any occurrence of the pattern $y|_{F_2}$ by the pattern $x|_{F_2}$ and which contains no occurrences of the pattern $y|_{F_2}$.

			For every $i \in \NN$ let $\pi_i\colon A^{V_i} \to A^{V_i}$ be the map given by \[\pi_i(w)(v)=(\pi(\xi_{\sigma_i,v}(w)))({1_\Gamma}).\] 
			
			For $i \in \mathbb{N}$, $\delta >0$ and $r \geq 0$ let
			
			\begin{align}
				\Omega_i(\delta) & = \xi_{\sigma_i}^{-1} (N_{\delta}([x]_{F_2} \cup [y]_{F_2},t)),\\
				\Omega_{\pi,i}(\delta) & = \pi_i(\xi_{\sigma_i}^{-1}(N_{\delta}([x]_{F_2}\cup [y]_{F_2},t))),\\
				\Omega_i(\delta,r) & = \xi_{\sigma_i}^{-1} (N_{\delta}[(x,y),t,r,F_2]).
			\end{align}

			For $w \in A^{V_i}$, let us denote by $S_i(f,w)= \sum_{v \in V_i}f(\xi_{\sigma_i, v}(w))$. Our main goal is to estimate the expression \[
			P(f,N_\delta\left[(x,y),t,r,F_2\right],\sigma_i)=
			\frac{1}{|V_i|} \log \left(\sum_{w \in  \Omega_i(\delta,r)}  \exp({S_i(f,w)}) \right). \]
			
			In order to do that, let us first fix $w \in \Omega_i(\delta,r)$. We shall first show that the difference $S_i(f,w)- S_i(f,\pi_i(w))$ is close to $r^*t|V_i|\Psi_f(x,y)$. Intuitively, this happens because if $v \in V_i$ is a sufficiently good approximation (in the sense that the configurations $\xi_{\sigma_i, v}(w)$ and $\xi_{\sigma_i, v}(\pi(w))$ ``look like elements of $X$'' restricted to some large set), then the term $f(\xi_{\sigma_i, v}(w))-f(\xi_{\sigma_i, v}(\pi(w)))$ is non-zero only if $\xi_{\sigma_i, v}(w)|_{F_2} = y|_{F_2}$, which occurs with probability close to $r^*t$ due to $w$ being in $\Omega_i(\delta,r)$.
			
			Let us now proceed formally. As $\Sigma$ is a sofic approximation sequence for $\Gamma$, the proportion of vertices $v\in V_i$ which are not $F_1F_2$-good is $o(|V_i|)$ and thus \[ \sum_{v \in V_i \mbox{ is not } F_1F_2 \mbox{ good }}\left(f(\xi_{\sigma_i,v}(w)) - f(\xi_{\sigma_i,v}(\pi_{i}(w))) \right) \leq 2o(|V_i|)\norm{f}_{\infty} = o(|V_i|). \]
			
			Suppose that $v \in V_i$ is $F_1F_2$-good for $\sigma_i$. Using the fact that $f$ is $F_1$-local and the definition of $\pi_i$, we obtain that, as long as there is no $g \in F_1$ such that $\xi_{\sigma_i,\sigma_i(g)v}(w)|_{F_2}=y|_{F_2}$, then
			\[f(\xi_{\sigma_i,v}(w)) - f(\xi_{\sigma_i,v}(\pi_{i}(w))) = 0.\]
			
			Therefore we only need to consider the $v$ for which there is some is $g \in F_1$ such that $\xi_{\sigma_i,\sigma_i(g)v}(w)|_{F_2}=y|_{F_2}$. Note that as $v$ is $F_1F_2$-good and $\{x,y\}$ have trivial $F_1$-overlaps within $F_2$, it follows that such a $g$ is unique and we have \[ \sum_{h \in F_1F_1^{-1}} (f(\xi_{\sigma_i,\sigma_i(gh)v}(w)) -f(\xi_{\sigma_i,\sigma_i(gh)v}(\pi(w))) = \Psi_f(x,y).  \]     
			
			Given $\delta>0$ as above, let $r^* = \frac{r}{1+r}$ and define $\delta^{-}$ and $\delta^{+}$ by
			\[  \delta^{-} = r^*-\frac{r - \delta}{1 + r - \delta}, \ \delta^{+}= \frac{r + \delta}{1 + r + \delta} - r^*.     \]
			
			As $w \in \Omega_i(\delta,r)$, it follows that the number $N_i$ of $v \in V_i$ such that $\xi_{\sigma_i,\sigma_i(gh)v}(w)|_{F_2} =y|_{F_2}$ is bounded above and below as follows \[   (r^*-\delta^{-})(t-\delta)(|V_i|-o(V_i)) \leq N_i \leq (r^*+\delta^{+})(t+\delta)|V_i|. \]
			
			Putting all of the previous estimates together, we obtain that
			
			\[  o(|V_i|) + (r^* - \delta^{-})(t- \delta)|V_i|\Psi_f(x,y)\leq S_i(f,w)-S_i(f,\pi_i(w)) \leq  o(|V_i|) + (r^* + \delta^{+})(t+ \delta)|V_i|\Psi_f(x,y).  \]

			For $w' \in \Omega_{\pi,i}(\delta)$, let us define $K_{w'}(\delta,r) = \left|\pi_i^{-1}\left( \{w'\}\right) \cap \Omega_i(\delta,r) \right|$ to be the number of $w \in \Omega_i(\delta,r)$ such that $\pi_i(w) = w'$. Note that we may always write
			
			\begin{align}
				\sum_{w \in  \Omega_i(\delta,r)} \exp(S_i(f,w))  & = \sum_{w' \in  \Omega_{\pi,i}(\delta)} \left(\sum_{w \in \Omega_i(\delta,r)\ :\ \pi_i(w)=w' } \exp(S_i(f,w')+ S_i(f,w)-S_i(f,\pi_i(w))) \right).
			\end{align}
			
			Using the bounds we obtained for $S_i(f,w)-S_i(f,\pi_i(w))$, we get that \begin{align}   \sum_{w \in  \Omega_i(\delta,r)} \exp(S_i(f,w)) & \leq \sum_{w' \in  \Omega_{\pi,i}(\delta)} K_{w'}(\delta,r)\exp(S_i(f,w') + o(|V_i|) + (r^* + \delta^{+})(t+ \delta)|V_i|\Psi_f(x,y))\\
				&  \leq \sum_{w' \in  \Omega_{\pi,i}(\delta)} \exp(S_i(f,w') +\log(K_{w'}(\delta,r))+ o(|V_i|) + (r^* + \delta^{+})(t+ \delta)|V_i|\Psi_f(x,y)), \end{align}
			and
			\begin{align}  \sum_{w \in  \Omega_i(\delta,r)} \exp(S_i(f,w)) & \geq \sum_{w' \in  \Omega_{\pi,i}(\delta)} K_{w'}(\delta,r)\exp(S_i(f,w') + o(|V_i|) + (r^* - \delta^{-})(t-\delta)|V_i|\Psi_f(x,y))\\
				& \geq \sum_{w' \in  \Omega_{\pi,i}(\delta)}\exp(S_i(f,w') + \log(K_{w'}(\delta,r)) + o(|V_i|) + (r^* - \delta^{-})(t-\delta)|V_i|\Psi_f(x,y)).
			\end{align}

			Our next goal is to provide an estimate for $\log(K_{w'}(\delta,r))$. Let $G(w',x) \isdef \left\{ v \in V_i : \xi_{\sigma_i, v}(w')|_{F_2}  = x|_{F_2} \right\}$. By our assumptions we have that for infinitely many values of $i$, \[ (t - \delta)|V_i|+ o(|V_i|) \leq |G(w',x)| \leq (t + \delta)|V_i|+ o(|V_i|).  \]
			
			Notice that any $w\in A^{V_i}$ such that $\pi_i(w)=w'$ is uniquely determined by the subset of vertices in $G(w',x)$, which consists of the positions that initially held the pattern $y|_{F_2}$ and were erased by the map $\pi_i$. Using again that $w \in \Omega_i(\delta,r)$ we obtain that
			
			\[  \binom{G(w',x)}{\lfloor r^*G(w',x)\rfloor}  \leq K_{w'}(\delta,r) \leq \sum_{k = \lceil (r^*-\delta^{-})G(w',x)\rceil}^{\lfloor (r^*+\delta^{+})G(w',x)\rfloor}\binom{G(w',x)}{k}.\]
			
			It is easy to show using Stirling's approximation that whenever $n \in \NN$ and $\alpha \in (0,\frac{1}{2})$, then \[ \log\binom{ n}{\lfloor\alpha n \rfloor } = (1+o(1))H(\alpha)n. \]
			
			This immediately yields the lower bound \[ K_{w'}(\delta,r) \geq \binom{G(w',x)}{\lfloor r^*G(w',x)\rfloor} =\exp( (1+o(1))H(r^*)(t-\delta)|V_i| +o(|V_i|)). \]
			
			For the upper bound, let $s \in (-\delta^{-},\delta^{+})$ such that the binomial coefficient \[\binom{G(w',x)}{\lfloor(r^*+s)G(w',x)\rfloor}\] 
			is largest. We obtain the following upper bound for small enough $\delta$,
			
			\begin{align}
				K_{w'}(\delta,r) & \leq \lceil(\delta^{+}-\delta^{-})|G(w',x)|\rceil \exp( (1+o(1))H(r^*+s)(t+\delta)|V_i| +o(|V_i|))\\
				& \leq |V_i| \exp( (1+o(1))H(r^*+s)(t+\delta)|V_i| +o(|V_i|)).
			\end{align}
			
			Putting these two bounds together and taking logarithms, we obtain 
			
			\[ o(|V_i|) + (t - \delta)H(r^*)|V_i| \leq \log(K_{w'}(\delta,r)) \leq  o(|V_i|) + (t + \delta)H(r^* +s)|V_i|. \]
			
			Now we can refine our previous bounds. Putting this last computation back in the previous formulas we get
			
			\begin{align}
				\frac{1}{|V_i|} \log \left(\sum_{w \in  \Omega_i(\delta,r)}  \exp(S_i(f,w)) \right) & \leq  \frac{1}{|V_i|} \log \left(\sum_{w' \in  \Omega_{\pi_i}(\delta)}\exp(S_i(f,w'))\right)   \\ 
				&  + \frac{o(|V_i|)}{|V_i|}+(t + \delta)\left(H(r^* +s) + (r^* + \delta^{+})\Psi_f(x,y)\right).
			\end{align}
			
			and
			
			\begin{align}
				\frac{1}{|V_i|} \log \left(\sum_{w \in  \Omega_i(\delta,r)}  \exp(S_i(f,w)) \right) & \geq  \frac{1}{|V_i|} \log \left(\sum_{w' \in  \Omega_{\pi_i}(\delta)}\exp(S_i(f,w'))\right)   \\ 
				&  + \frac{o(|V_i|)}{|V_i|}+(t - \delta)\left(H(r^*) + (r^* - \delta^{-})\Psi_f(x,y)\right).
			\end{align}
			
			Let now $r_1,r_2\geq 0$ and notice that the first term in both bounds does not depend on $r$ and thus disappears when plugging the above bounds into the difference
			\begin{align}
				P(f,N_\delta[(x,y),t,r_1,F_2],\sigma_i)- P(f,N_\delta[(x,y),t,r_2,F_2],\sigma_i).
			\end{align}
			Furthermore, the term $o(|V_i|)/|V_i|$ vanishes when taking the limsup as $i \to \infty$, also, the terms $\delta^{-},\delta^{+}$ and $s$ all go to $0$ uniformly as $\delta$ goes to $0$. We obtain thus that for all $r_1,r_2\geq 0$ we have
			\begin{align}
				\inf_{\delta >0}
				\limsup_{i \to \infty}
				\left(P(f,N_\delta\left[(x,y),t,r_1,F_2\right],\sigma_i)- P(f,N_\delta\left[(x,y),t,r_2,F_2\right],\sigma_i) \right)\\ 
				= t\left( H(r_1^*)+r_1^*\Psi_f(x,y) - ( H(r_2^*)+r_2^*\Psi_f(x,y))   \right).\\
				= t\left( \hat p (r_1,\Psi_f(x,y)) - \hat p (r_2,\Psi_f(x,y))   \right).
			\end{align}
			
			Which is what we wanted to show.\end{proof}
		
		\begin{definition}
			Given $x \in A^\Gamma$ and $F_1 \subseteq F_2 \Subset \Gamma$,
			we say that $x$ is $(F_1,F_2)$-self-overlapping  if 
			\[(gx)|_{(F_2 \setminus F_1) \cap g (F_2 \setminus F_1)}= x|_{(F_2 \setminus F_1) \cap g (F_2 \setminus F_1)} \mbox{ for some } g \in F_2F_1^{-1}\setminus \{1_\Gamma\}.\]
			We say that $x$ is \emph{non self-overlapping} if for every $F_1 \Subset \Gamma$ there exists $F_2 \Subset \Gamma$ such that $F_1 \subseteq F_2$ and $x$ is not $(F_1,F_2)$-self-overlapping.
		\end{definition}
		
		Notice that the condition of being $(F_1,F_2)$-self-overlapping is satisfied automatically if there is $g \in F_2F_1^{-1}\setminus \{1_\Gamma\}$ for which $(F_2 \setminus F_1) \cap g (F_2 \setminus F_1) = \varnothing$. Thus in the definition of non self-overlapping it is understood that the sets $F_2$ must satisfy that $(F_2 \setminus F_1) \cap g (F_2 \setminus F_1) \neq \varnothing$ for every $g \in F_2F_1^{-1}\setminus \{1_\Gamma\}$. The usefulness of this condition is justified in the following lemma.
		
		\begin{lemma}\label{lem:noself_overlap_implies_non_overlap_within}
			Let $x \in A^{\Gamma}$ be non self-overlapping. For every $F_1\Subset \Gamma$ and $y \in A^{\Gamma}$ such that $(x,y) \in \cT_{F_1}(X)$, there exists $F_2\Subset \Gamma$ with $F_1 \subset F_2$ such that $\{x,y\}$ has trivial $F_1$-overlaps within $F_2$.
		\end{lemma}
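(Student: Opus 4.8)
The plan is to produce $F_2$ directly from the non-self-overlapping hypothesis and then leverage that $x$ and $y$ coincide off $F_1$ to collapse the four-way comparison defining ``trivial $F_1$-overlaps within $F_2$'' into a single comparison between $x$ and its translate $gx$.

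First I would invoke the definition of non self-overlapping for the given $F_1$ to obtain $F_2 \Subset \Gamma$ with $F_1 \subseteq F_2$ for which $x$ is not $(F_1,F_2)$-self-overlapping. Unwinding that definition, for every $g \in F_2F_1^{-1}\setminus\{1_\Gamma\}$ the restrictions of $gx$ and $x$ to $(F_2\setminus F_1)\cap g(F_2\setminus F_1)$ disagree; as the remark following the definition notes, this already forces that region to be nonempty, so I may select a witness $h_g \in (F_2\setminus F_1)\cap g(F_2\setminus F_1)$ with $(gx)(h_g)\ne x(h_g)$.

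The heart of the argument is the observation that on $(F_2\setminus F_1)\cap g(F_2\setminus F_1)$ neither $x$ nor $y$ can be told apart, using $(x,y)\in\cT_{F_1}(X)$, that is, $x|_{\Gamma\setminus F_1}=y|_{\Gamma\setminus F_1}$. Fixing arbitrary $a,b\in\{x,y\}$ and $g\in F_2F_1^{-1}\setminus\{1_\Gamma\}$, for any $h\in(F_2\setminus F_1)\cap g(F_2\setminus F_1)$ I would note that $h\in F_2\setminus F_1\subseteq\Gamma\setminus F_1$ gives $a(h)=x(h)$, while $g^{-1}h\in F_2\setminus F_1\subseteq\Gamma\setminus F_1$ gives $(gb)(h)=b(g^{-1}h)=x(g^{-1}h)=(gx)(h)$. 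Evaluating at $h=h_g$ then yields $(gb)(h_g)=(gx)(h_g)\ne x(h_g)=a(h_g)$.

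Finally, since $(F_2\setminus F_1)\cap g(F_2\setminus F_1)\subseteq F_2\cap gF_2$, the disagreement at $h_g$ shows $(gb)|_{F_2\cap gF_2}\ne a|_{F_2\cap gF_2}$; as $g$, $a$ and $b$ were arbitrary, this is exactly the statement that $\{x,y\}$ has trivial $F_1$-overlaps within $F_2$. I do not anticipate a genuine obstacle: the only care needed is to keep straight which membership condition ($h\in F_2\setminus F_1$ versus $g^{-1}h\in F_2\setminus F_1$) licenses replacing $a$ by $x$ and $gb$ by $gx$, and to confirm that the self-overlap region sits inside $F_2\cap gF_2$ so that disagreement on the former propagates to the latter.
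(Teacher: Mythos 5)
Your proposal is correct and is essentially the paper's argument in contrapositive form: the paper assumes a nontrivial $F_1$-overlap and uses $x|_{\Gamma\setminus F_1}=y|_{\Gamma\setminus F_1}$ to manufacture an $(F_1,F_2)$-self-overlap of $x$, while you run the same substitution forwards from a witness of non-self-overlap. The key reductions (nonemptiness of $(F_2\setminus F_1)\cap g(F_2\setminus F_1)$ and collapsing $a,b\in\{x,y\}$ to $x$ off $F_1$) are identical, so there is no substantive difference.
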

		
		\begin{proof}
			As $x$ is non self-overlapping, there is $F_2\Subset \Gamma$ such that $F_1 \subseteq F_2$ and $x$ is not $(F_1,F_2)$-self-overlapping. Suppose that $\{x,y\}$ does not have trivial $F_1$-overlaps within $F_2$, then there is $g \in F_2F_1^{-1}\setminus \{1_\Gamma\}$ and $x',y' \in \{x,y\}$ such that \[(gy')|_{F_2 \cap g F_2} = x'|_{F_2 \cap gF_2}.\]
			In particular, as $(F_2 \setminus F_1) \cap g (F_2 \setminus F_1) \neq \varnothing$, we have \[(gy')|_{(F_2 \setminus F_1) \cap g (F_2 \setminus F_1)} = x'|_{(F_2 \setminus F_1) \cap g (F_2 \setminus F_1)}. \]
			Finally, as $(x,y) \in \cT_{F_1}(X)$, it follows that $x|_{F_2 \setminus F_1} = y|_{F_2 \setminus F_1}$ and $(gx)|_{g(F_2\setminus F_1)}=(gy)|_{g(F_2\setminus F_1)}$. We obtain that \[  (gx)|_{(F_2 \setminus F_1) \cap g (F_2 \setminus F_1)} = x|_{(F_2 \setminus F_1) \cap g (F_2 \setminus F_1)}.\]
			Which contradicts the fact that $x$ is not $(F_1,F_2)$-self-overlapping.\end{proof}

		\begin{lemma}\label{lem:noverlap}
			Let $\Sigma$ be a sofic approximation for $\Gamma$ such that  $h_{\Sigma}(\Gamma \curvearrowright X)\geq 0$ and let $\mu$ be an equilibrium measure for a subshift $X\subset A^{\Gamma}$ with with respect to an $F$-locally constant function $f\colon X \to \RR$ and $\Sigma$. Suppose also that $(x,y) \in \cT^0(X)$, $x$ is in the support of $\mu$ and $x$ is non self-overlapping.

			Then $y$ is also  in the support of $\mu$ and there exists a sequence of finite subsets $F_n \Subset \Gamma$ increasing to $\Gamma$ so that \[\lim_{n \to \infty}  \frac{\mu([y]_{F_n})}{\mu([x]_{F_n})} = \exp(\Psi_f(x,y)).\]
		\end{lemma}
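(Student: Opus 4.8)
The plan is to reduce the statement to the finite-model ratio estimate of \Cref{lem:opt_ratio} together with a two-sided comparison of sofic pressures. Write $\Psi \isdef \Psi_f(x,y)$ and $r_* \isdef \exp(\Psi)$, and recall that $\hat p(\cdot,\Psi)$ attains its unique maximum at $r_*$. Since $(x,y)\in\cT^0(X)$ we have $(x,y)\in\cT^0_{F_1}(X)$ for some $F_1$, and enlarging $F_1$ we may assume $f$ is $F_1$-locally constant; note $(x,y)\in\cT^0_{F}(X)$ for every $F\supseteq F_1$. Using that $x$ is non self-overlapping together with \Cref{lem:noself_overlap_implies_non_overlap_within}, I would first build an increasing sequence of pairs $F_1^{(n)}\subseteq F_2^{(n)}$ with $F_1^{(n)}\nearrow\Gamma$, $F_1^{(n)}(F_1^{(n)})^{-1}\subseteq F_2^{(n)}$, $f$ being $F_1^{(n)}$-locally constant, $(x,y)\in\cT^0_{F_1^{(n)}}(X)$, and $\{x,y\}$ having trivial $F_1^{(n)}$-overlaps within $F_2^{(n)}$. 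Arranging these nested conditions simultaneously is a bookkeeping step, but it is exactly what the non self-overlapping hypothesis is designed to furnish. The sequence in the conclusion will be $F_n \isdef F_2^{(n)}$.

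Fix $n$ and abbreviate $F_2 = F_2^{(n)}$. Since $x\in\supp(\mu)$ we have $\mu([x]_{F_2})>0$, so I set $t_n \isdef \mu([x]_{F_2}\cup[y]_{F_2})>0$ and $r_n\isdef \mu([y]_{F_2})/\mu([x]_{F_2})$, and abbreviate $\Pi_n(r)\isdef \inf_{\delta>0}\limsup_{i\to\infty}P(f,N_\delta[(x,y),t_n,r,F_2],\sigma_i)$. The goal is to show $r_n=r_*$; as this holds for every $n$, the ratio along $(F_n)$ is constantly $r_*$, giving the claimed limit, and then $\mu([y]_{F_2})=r_*\,\mu([x]_{F_2})>0$ yields $y\in\supp(\mu)$. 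I would establish two pressure bounds. First, $\Pi_n(r)\le P_\Sigma(f,\Gamma\curvearrowright X)$ for every $r\ge 0$, which is immediate from the inclusion $N_\delta[(x,y),t_n,r,F_2]\subseteq N_\delta(X)$ and the monotonicity of $P(f,\cdot,\sigma_i)$ under enlarging the set of admissible empirical measures. Second, $\Pi_n(r_n)\ge P_\Sigma(f,\Gamma\curvearrowright X)$: because $\mu$ is an equilibrium measure, $P_\Sigma(f,\Gamma\curvearrowright X)=\inf_\delta\limsup_i P(f,N_\delta(\mu),\sigma_i)$, and since $[x]_{F_2}\cup[y]_{F_2}$ is clopen and $\mu([x]_{F_2})>0$, every measure sufficiently weak-$*$ close to $\mu$ lies in $N_\delta[(x,y),t_n,r_n,F_2]$; hence $N_{\delta'}(\mu)\subseteq N_\delta[(x,y),t_n,r_n,F_2]$ for a suitable $\delta'=\delta'(\delta)\to 0$, while each term $\limsup_i P(f,N_\delta(\mu),\sigma_i)$ dominates its own infimum over $\delta$, namely $P_\Sigma(f,\Gamma\curvearrowright X)$. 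Combining the two bounds gives $\Pi_n(r_n)=P_\Sigma(f,\Gamma\curvearrowright X)$ and $\Pi_n(r_*)\le \Pi_n(r_n)$.

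Next I would invoke \Cref{lem:opt_ratio} with $r_1=r_*$, $r_2=r_n$. Its hypotheses are met: $F_1^{(n)}(F_1^{(n)})^{-1}\subseteq F_2$, the pair $(x,y)$ and the overlap condition are as arranged, and the nonemptiness of $\xi_{\sigma_i}^{-1}(N_\delta([x]_{F_2}\cup[y]_{F_2},t_n))$ for infinitely many $i$ follows from $h_\Sigma(\Gamma\curvearrowright X)\ge 0$, which guarantees that models approximating $\mu$ (and hence realizing the value $t_n$) exist. The lemma then yields $\inf_\delta\limsup_i\bigl(P(f,N_\delta[(x,y),t_n,r_*,F_2],\sigma_i)-P(f,N_\delta[(x,y),t_n,r_n,F_2],\sigma_i)\bigr)=t_n\bigl(\hat p(r_*,\Psi)-\hat p(r_n,\Psi)\bigr)\ge 0$. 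The crucial point — and the step requiring the most care — is to upgrade this difference estimate to the separated inequality $\Pi_n(r_*)\ge \Pi_n(r_n)+t_n\bigl(\hat p(r_*,\Psi)-\hat p(r_n,\Psi)\bigr)$. The obstruction is the illegitimate interchange of $\inf_\delta$ and $\limsup_i$; it is overcome precisely because, as exhibited in the proof of \Cref{lem:opt_ratio}, the $r$-independent bulk of the finite-model pressure (the contribution of the configurations obtained after erasing all occurrences of $y|_{F_2}$) cancels in the difference, and the residual $r$-dependence is controlled to within $O(\delta)$ uniformly in $i$, so that for each fixed small $\delta$ the difference of limit superiors equals $t_n(\hat p(r_*,\Psi)-\hat p(r_n,\Psi))+O(\delta)$ independently of the oscillation of the bulk term.

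Granting the upgrade, I combine $\Pi_n(r_n)\le \Pi_n(r_n)+t_n\bigl(\hat p(r_*,\Psi)-\hat p(r_n,\Psi)\bigr)\le\Pi_n(r_*)\le \Pi_n(r_n)$, which forces $t_n\bigl(\hat p(r_*,\Psi)-\hat p(r_n,\Psi)\bigr)=0$. Since $t_n>0$ and $r_*$ is the unique maximizer of $\hat p(\cdot,\Psi)$, this gives $r_n=r_*=\exp(\Psi_f(x,y))$ for every $n$, from which both conclusions ($y\in\supp(\mu)$ and the limit of ratios) follow as above. I expect the main obstacle to be exactly this interchange of infimum and limit superior in the combination step; the pressure bounds and the set-theoretic bookkeeping are comparatively routine.
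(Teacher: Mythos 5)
Your proposal is correct and follows essentially the same route as the paper: fix the nested sets via the non self-overlapping hypothesis, identify $r_n=\mu([y]_{F_2^{(n)}})/\mu([x]_{F_2^{(n)}})$, use the equilibrium property to compare the pressure over $N_\delta[(x,y),t_n,r,F_2^{(n)}]$ with $P_\Sigma(f,\Gamma\curvearrowright X)$ from both sides, and invoke \Cref{lem:opt_ratio} together with the uniqueness of the maximizer of $\hat p(\cdot,\Psi_f(x,y))$ to force $r_n=\exp(\Psi_f(x,y))$. The only difference is organizational — you separate the difference identity into two one-sided inequalities for $\Pi_n$, while the paper bounds the difference directly — and the $\inf_\delta$/$\limsup_i$ interchange you flag is resolved exactly as you say, by the uniform-in-$i$ two-sided control established inside the proof of \Cref{lem:opt_ratio}.
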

		\begin{proof}

			Choose a sequence of finite subsets  $F_1^{(n)} \Subset \Gamma$ which is increasing to $\Gamma$ and such that $F\subset F_1^{(n)}$ and $(x,y) \in \cT^0_{F_1^{(n)}}(X)$ for all $n \in \mathbb{N}$. By~\Cref{lem:noself_overlap_implies_non_overlap_within}, for each $n \in \NN$ there exists  $F_2^{(n)} \Subset \Gamma$ such that $F_1^{(n)} \subset F_2^{(n)}$ and  $\{x,y\}$ has trivial $F_1^{(n)}$-overlaps within $F_2^{(n)}$. Also, because $x$ is in the support of $\mu$, we can define \[t = \mu([x|_{F_2^{(n)}}]\cup [y|_{F_2^{(n)}}])>0 \mbox{ and } r = \frac{\mu([y|_{F_2^{(n)}}])}{\mu([x|_{F_2^{(n)}}])} \geq 0. \]
			For now fix $n \in \NN$ and denote $F_2 := F_2^{(n)}$.
			
			Note that $\mu \in N_{\delta}[(x,y),t,r,F_2] \subset N_{\delta}([x|_{F_2}]\cup [y|_{F_2}],t)$ for every $\delta>0$. From the assumption that $h_{\Sigma}(\Gamma \curvearrowright X)\geq 0$ and the variational principle for sofic pressure (\Cref{thm:variational}) we obtain that $h_{\Sigma}(\Gamma \curvearrowright (X,\mu))\geq 0$. As $\mu \in N_{\delta}[(x,y),t,r,F_2]$, it follows that there is an infinite sequence of $i \in \NN$ such that $\xi_{\sigma_i}^{-1}(N_{\delta}[(x,y),t,r,F_2])\neq \varnothing$.
			
			From~\Cref{lem:opt_ratio} it follows that for every $r_1,r_2 \geq 0$ we have
			
			\begin{align}
				\lim_{\delta \to 0}
				\limsup_{i \to \infty}
				\left(P(f,N_\delta[(x,y),t,r_1,F_2],\sigma_i)- P(f,N_\delta[(x,y),t,r_2,F_2],\sigma_i) \right)\\ 
				= t\left( \hat p (r_1,\Psi_{f}(x,y)) - \hat p (r_2,\Psi_{f}(x,y))   \right).
			\end{align}
			
			As $\mu$ is an equilibrium measure for $f$, for every $\varepsilon>0$ we can find $\delta_2>0$ such that for all sufficiently big $i \in \NN$ we have \[\left|P_{\Sigma}(f,\Gamma \curvearrowright (X,\mu))- P(f, N_{\delta_2}[(x,y),t,r,F_2],\sigma_i)\right| < \varepsilon.\]
			
			Fix any $\varepsilon >0$. It follows that for any $r' \geq 0$ and sufficiently big $i \in \NN$ we have \[ P(f, N_{\delta_2}[(x,y),t,r',F_2],\sigma_i) - P(f, N_{\delta_2}[(x,y),t,r,F_2],\sigma_i) \leq  \varepsilon.  \]
			
			From the above relation we obtain that 
			
			\[ \hat{p}(r',\Psi_{f}(x,y)) - \hat{p}(r,\Psi_{f}(x,y)) \leq \frac{\varepsilon}{t}.   \]
			
			Recall that for any $C \in \RR$, the expression $\hat p(r',C)$ as a function of $r' \geq 0$ has a unique local maximum at $r' = \exp(C)$. We obtain that \[\hat{p}(\exp(\Psi_{f}(x,y)), \Psi_{f}(x,y)) - \hat{p}(r,\Psi_{f}(x,y)) \leq \frac{\varepsilon}{t}.\]
			
			The restriction to the first coordinate of the function $\hat{p}$ which maps $r' \mapsto \hat{p}(r',\Psi_{f}(x,y))$ attains a unique maximum at $r' = \exp((\Psi_{f}(x,y)))$. Moreover, its inverse has two continuous branches which meet on the maximum. It follows that for every $\eta>0$ we can choose $\varepsilon>0$ such that whenever the above equation holds, then $|r-\exp(\Psi_{f_{0}}(x,y))| \leq \eta$. As $\varepsilon$ was arbitrary, we obtain that \[  \frac{\mu([y]_{F_2^{(n)}})}{\mu([x]_{F_2^{(n)}})}  = r = \exp((\Psi_{f}(x,y))). \]
			As $n \in \NN$ was arbitrary, we obtain that
			\[
			\lim_{n \to \infty}  \frac{\mu([y]_{F_2^{(n)}})}{\mu([x]_{F_2^{(n)}})} = \exp(\Psi_f(x,y)),
			\]

			as we required. This equality automatically implies that both $x$ and $y$ are in the support of $\mu$.
			
		\end{proof}
		
		The remainder of the proof consists essentially on getting rid of the non self-overlapping hypothesis. In order to do this, we will first show that the condition occurs almost surely with respect to the uniform Bernoulli measure on the full $\Gamma$-shift, and then we use a trick involving a direct product to take advantage of the above property on an arbitrary subshift.

		\begin{lemma}\label{lem:nooverlap_iid}
			Let $\mu$ be the uniform Bernoulli measure on $\{0,1\}^\Gamma$. There exists a Borel subset $X_0 \subset \{0,1\}^\Gamma$ with $\mu(X_0)=1$ such that every $x \in X_0$ is non self-overlapping.
		\end{lemma}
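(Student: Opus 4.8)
The plan is to treat one finite set $F_1 \Subset \Gamma$ at a time and to show that the set
$B_{F_1}$ of configurations that are $(F_1,F_2)$-self-overlapping for \emph{every} $F_2 \supseteq F_1$ is $\mu$-null. Since there are only countably many $F_1 \Subset \Gamma$, the set $X_0 := \{0,1\}^\Gamma \setminus \bigcup_{F_1 \Subset \Gamma} B_{F_1}$ will then have full measure and, by the very definition of non self-overlapping, consist precisely of non self-overlapping points. First I would record that whether $x$ is $(F_1,F_2)$-self-overlapping depends only on $x|_{F_2\setminus F_1}$, because every comparison $x(h)$ versus $x(g^{-1}h)$ appearing in the definition has both $h$ and $g^{-1}h$ in $F_2\setminus F_1$. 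For any $F_2\supseteq F_1$ we have $B_{F_1}\subseteq\{x : x\text{ is }(F_1,F_2)\text{-self-overlapping}\}$, so $\mu(B_{F_1})=0$ will follow as soon as we produce one exhaustion $E_m\nearrow\Gamma$ with $F_1\subseteq E_m$ along which $\mu(\{x : x\text{ is }(F_1,E_m)\text{-self-overlapping}\})\to 0$.

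The key probabilistic input is a coincidence estimate. For $g\neq 1_\Gamma$ and a finite $S\subseteq\Gamma$, let $M_{g,S}$ be the event that $x(h)=x(g^{-1}h)$ for all $h\in S$. Since $g\neq 1_\Gamma$, the map $h\mapsto g^{-1}h$ acts freely, so the constraints defining $M_{g,S}$ form a graph of maximal degree two whose connected components are paths and cycles, cycles occurring only when $g$ has finite order; counting the number of independent constraints then gives $\mu(M_{g,S})\le 2^{-|S|/2}$. Writing $S_g := (E_m\setminus F_1)\cap g(E_m\setminus F_1)$ and recalling from the remark after the definition that any $F_2$ witnessing non self-overlapping must have all such overlaps nonempty, a union bound over the finitely many witnesses yields
\[
\mu\bigl(\{x : x\text{ is }(F_1,E_m)\text{-self-overlapping}\}\bigr)\;\le\;\sum_{g\in E_mF_1^{-1}\setminus\{1_\Gamma\}} 2^{-|S_g|/2}.
\]

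It remains to choose the exhaustion so that the right-hand side tends to $0$, and this geometric estimate is the crux of the argument. I would fix a proper length function $\ell$ on $\Gamma$ and take $E_m = F_1\cup B_m$, where $B_m$ is the ball of radius $m$. A midpoint argument shows that for every $g$ with $\ell(g)\le m+O(1)$, which covers all $g\in E_mF_1^{-1}$, the overlap $S_g$ contains a translate of roughly a ball of radius $m/2$; hence all overlaps are nonempty for large $m$ and, more importantly, $|S_g|\ge |B_{\sim m/2}|$ for every witness. Since the number of witnesses is at most $|E_mF_1^{-1}|\le |B_{m+O(1)}|$, and submultiplicativity of ball sizes gives $\log |B_{m+O(1)}|\ll |B_{\sim m/2}|$, the union bound is dominated by $|B_{m+O(1)}|\,2^{-|B_{\sim m/2}|/2}\to 0$. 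This gives $\mu(B_{F_1})=0$ for each $F_1$, and the countable union finishes the proof.

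The main obstacle is exactly this last geometric step for an arbitrary countable group: one must guarantee finite sets whose overlaps with all relevant translates are large compared with the \emph{logarithm} of their cardinality. A crude union bound would be defeated by non-amenability, and what rescues it is that the overlaps grow like a half-radius ball while only their logarithm competes with the witness count. For non–finitely generated $\Gamma$ one should either pass to the finitely generated subgroup generated by $F_1$ together with sufficiently many elements, or work directly with $\ell$, in order to have the midpoint argument available. An alternative organization that lessens the reliance on the sharp overlap bound is to split, for $x\in B_{F_1}$, the case where a single $g\neq 1_\Gamma$ witnesses self-overlapping for arbitrarily large $E_m$, which forces $x(h)=x(g^{-1}h)$ off a finite set and hence places $x$ in a null set via the coincidence estimate applied to infinitely many disjoint orbits, from the case of escaping witnesses, which still requires the overlap growth described above.
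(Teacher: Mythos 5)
Your proposal is correct and its skeleton is the same as the paper's: fix $F_1$, bound the probability of being $(F_1,F_2)$-self-overlapping by a union bound over $g \in F_2F_1^{-1}\setminus\{1_\Gamma\}$ of an exponentially small coincidence probability, and choose $F_2$ (powers of a generating set, with a midpoint argument) so that every overlap $(F_2\setminus F_1)\cap g(F_2\setminus F_1)$ is large compared with the logarithm of the number of witnesses; the countable union over $F_1$ then finishes. Three differences are worth recording. First, your coincidence estimate $\mu(M_{g,S})\le 2^{-|S|/2}$, obtained from the degree-two constraint graph and the observation that cycles (which occur only for torsion $g$) cost at most half the edges, is the correct statement; the paper asserts the probability \emph{equals} $2^{-|S|}$, which fails when $g$ has finite order and a full $\langle g\rangle$-orbit lies in the overlap (for $g$ of order two and $S=\{h,gh\}$ the probability is $1/2$, not $1/4$). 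The slip is harmless, since halving the exponent does not affect any of the subsequent summability claims, but your version is the careful one. Second, your bookkeeping is simpler: since $B_{F_1}$ is contained in the $(F_1,E_m)$-self-overlapping event for \emph{every} $m$, you only need the bound to tend to $0$ along one exhaustion, and $\log|B_{m+O(1)}|\le C\log|B_{m/2}|=o(|B_{m/2}|)$ by submultiplicativity does this directly; the paper instead runs Borel--Cantelli along a subsequence $F_2^{(n)}=S^{3^{k_n}}$ extracted so that $|S^{3^{k_n-1}}|\ge|S^{3^{k_n}}|^{1/4}$. Third, the non-finitely-generated case is your only under-specified point: ``working directly with $\ell$'' does not suffice, because a general proper length function need not admit midpoints (e.g.\ $\ell(x)=\max\{i : x_i\ne 0\}$ on $\bigoplus_{i}\ZZ/2\ZZ$), so you must pass to a finitely generated subgroup containing $F_1$; when $\Gamma$ is locally finite that subgroup is finite, and one should say explicitly, as the paper does, that taking $E$ to be arbitrarily large finite subgroups makes every overlap the whole subgroup, which closes that case.
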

		\begin{proof}

			For any  $ F_1 \subseteq F_2 \Subset \Gamma$ and $g \in  F_2F_1^{-1}\setminus \{1_\Gamma\}$
			\[
			\mu \left(\left\{x \in \{0,1\}^\Gamma : ~ (gx)|_{(F_2 \setminus F_1) \cap g (F_2 \setminus F_1)}= x|_{(F_2 \setminus F_1) \cap g (F_2 \setminus F_1)}\right\}  \right) = \left( \frac{1}{2} \right)^{|(F_2 \setminus F_1) \cap g (F_2 \setminus F_1)|}.
			\]
			By a simple union bound, it follows that
			\begin{align}
				\mu \left( \left\{ x \in \{0,1\}^\Gamma: x \mbox { is }(F_1,F_2)\mbox{-self-overlapping} \right\} \right)& \le \sum_{ g \in F_2F_1^{-1}\setminus \{1_\Gamma\}} \left( \frac{1}{2} \right)^{|(F_2 \setminus F_1) \cap g (F_2 \setminus F_1)|}\\
				& \leq  |F_1||F_2|\max_{g \in F_2F_1^{-1}}2^{-|(F_2 \setminus F_1) \cap g (F_2 \setminus F_1)|}\\
				& \leq  |F_1|4^{|F_1|}|F_2|\max_{g \in F_2F_1^{-1}}2^{-|F_2 \cap gF_2|}.
			\end{align}
			
			As $\Gamma$ is countably infinite, in order to prove the lemma it suffices to show that for any fixed $F_1 \Subset \Gamma$ there exists a sequence $(F_2^{(n)})_{n \geq 1}$, with $F_1 \subseteq F_2^{(n)} \Subset \Gamma$ for all $n \geq 1$, and with the property that $\mu$-almost surely the number of $n$'s for which $x$ is $(F_1,F_2^{(n)})$-self-overlapping is at most finite.
			By the Borel–Cantelli lemma, it suffices in turn to find for each $F_1 \Subset \Gamma$ a sequence $(F_2^{(n)})_{n \geq 1}$ with $F_1 \subseteq  F_2^{(n)} \Subset \Gamma$ for all $n \geq 1$ such that
			\[
			\sum_{n =1}^\infty \mu\left(\left\{ x \in \{0,1\}^\Gamma: x \mbox { is }(F_1,F_2^{(n)})\mbox{-self-overlapping} \right\} \right) < \infty.
			\]
			By the previous calculation, it suffices to have that 
			\[
			\sum_{n =1}^\infty |F_2^{(n)}|\max_{g \in F_2^{(n)}F_1^{-1}}2^{-|F_2^{(n)} \cap g F_2^{(n)}|} < \infty.
			\]
			Let $F_1 \Subset \Gamma$ be an arbitrary non-empty finite set. If $\Gamma$ is locally finite, then we can choose a sequence $(F_2^{(n)})_{n \geq 1}$ of finite subgroups of $\Gamma$, each containing $F_1$ with $|F_n| > n$. In that case $F_2^{(n)}F_1^{-1}= F_2^{(n)}$ and for any $g \in F_2^{(n)}$ we have $F_2^{(n)}\cap g F_2^{(n)} = F_2^{(n)}$. We deduce that in this case
			\begin{align}
				\sum_{n =1}^\infty |F_2^{(n)}|\max_{g \in F_2^{(n)}F_1^{-1}}2^{-|F_2^{(n)} \cap g F_2^{(n)}|} \leq  \sum_{n =1}^\infty |F_2^{(n)}| 2^{-|F_2^{(n)}|} \leq \sum_{n =1}^\infty n2^{-n} < \infty.
			\end{align}
			
			If $\Gamma$ is not locally finite, there exists $S \Subset \Gamma$ which is a finite symmetric set with $\{1_\Gamma\} \cup F_1 \subset S$ and so that the group generated by $S$ is infinite.
			For any $m \geq 1$ and $g \in S^{3m} F_1^{-1}$ we can choose $h \in \Gamma$ such that $hS^m \subset S^{3m} \cap gS^{3m}$ and thus we have $|S^{3m} \cap gS^{3m}| \geq |S^m|$. As $|S^{3^n}| \le |S|^{3^n}$ for all $n$ it follows that there exists a strictly increasing sequence of integers $(k_n)_{n=1}^\infty$ such that 
			$|S^{3^{k_n-1}}| \ge |S^{3^{k_n}}|^{\frac{1}{ 4}}$ for all $n \in \mathbb{N}$. 
			For a sequence as above, let $F_2^{(n)}= S^{3^{k_n}}$.
			It follows that for each $n \in \mathbb{N}$ and every $g \in F_2^{(n)} F_1^{-1}$,
			\[
			|F_2^{(n)} \cap g F_2^{(n)}| \geq |S^{3^{k_n-1}}|  \ge |S^{3^{k_n}}|^{\frac{1}{ 4}} = |F_2^{(n)}|^{\frac{1}{4}},
			\]
			and in particular
			
			\begin{align}
				\sum_{n =1}^\infty |F_2^{(n)}|\max_{g \in F_2^{(n)}F_1^{-1}}2^{-|F_2^{(n)} \cap g F_2^{(n)}|} & \leq  \sum_{n =1}^\infty |F_2^{(n)}| 2^{-|F_2^{(n)}|^{\frac{1}{4}}} \leq \sum_{n =1}^\infty n 2^{\sqrt[4]{n}} < \infty.
			\end{align}
		\end{proof}

		\begin{lemma}\label{lem:cheating}
			Let $\Sigma$ be a sofic approximation sequence for $\Gamma$. Let $f\in C(X)$ and $\mu$ an equilibrium measure on $X$ for $f$. Consider $\widetilde{X} = X \times \{\symb{0},\symb{1}\}^{\Gamma}$, $\widetilde{f}\colon \widetilde{X} \to \RR$ given by $\widetilde{f}(x,y)=f(x)$ for every $x \in X$ and $y \in \{\symb{0},\symb{1}\}^{\Gamma}$ and let $\widetilde{\mu}$ be the product of $\mu$ and the uniform Bernoulli measure on $\{\symb{0},\symb{1}\}^{\Gamma}$. We have that $\widetilde{\mu}$ is an equilibrium measure for $\widetilde{f}$ on $\widetilde{X}$.
		\end{lemma}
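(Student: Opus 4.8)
The plan is to prove the two matching inequalities
\[ P_{\Sigma}(\Gamma \curvearrowright \widetilde X,\widetilde f)\le \log 2 + P_{\Sigma}(\Gamma \curvearrowright X,f) \le P_{\Sigma}(\Gamma \curvearrowright \widetilde X,\widetilde \mu,\widetilde f) \]
and to close the loop with the variational principle (\Cref{thm:variational}). Write $\beta$ for the uniform Bernoulli measure on $\{\symb0,\symb1\}^{\Gamma}$, so $\widetilde\mu=\mu\times\beta$, and let $\pi_1\colon(A\times\{\symb0,\symb1\})^{\Gamma}\to A^{\Gamma}$ be the projection onto the first coordinate. The crucial structural observation is that since $\widetilde f(x,y)=f(x)$, for any $w=(w_1,w_2)\in\widetilde A^{V_i}$ the weight $\exp(S_i(\widetilde f,w))$ depends only on $w_1$, while $(\pi_1)_*\xi_{\sigma_i}(w)=\xi_{\sigma_i}(w_1)$. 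Because $\pi_1$ is continuous with $\pi_1(\widetilde X)=X$ and $(\pi_1)_*\widetilde\mu=\mu$, for every $\delta>0$ there is $\delta'>0$ (with $\delta'\to0$ as $\delta\to0$) such that $(\pi_1)_*N_\delta(\widetilde X)\subset N_{\delta'}(X)$ and $(\pi_1)_*N_\delta(\widetilde\mu)\subset N_{\delta'}(\mu)$.

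For the topological upper bound I would group the sum defining $P(\widetilde f,N_\delta(\widetilde X),\sigma_i)$ by its first coordinate. Every contributing $w$ has $\xi_{\sigma_i}(w_1)\in N_{\delta'}(X)$, and for each such $w_1$ there are at most $2^{|V_i|}$ choices of $w_2$, so
\[ \sum_{w:\ \xi_{\sigma_i}(w)\in N_\delta(\widetilde X)}\exp\bigl(S_i(\widetilde f,w)\bigr)\ \le\ 2^{|V_i|}\sum_{w_1:\ \xi_{\sigma_i}(w_1)\in N_{\delta'}(X)}\exp\bigl(S_i(f,w_1)\bigr). \]
Taking $\tfrac1{|V_i|}\log$, then $\limsup_i$, then $\inf_\delta$ yields the upper bound.

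The heart of the argument is the measure-theoretic lower bound. Fixing $\delta>0$ and the associated $\delta'$, set $G_i\isdef\{w_1\in A^{V_i}:\xi_{\sigma_i}(w_1)\in N_{\delta'}(\mu)\}$. Since every contributing pair has $w_1\in G_i$, and the weight does not see $w_2$,
\[ \sum_{w:\ \xi_{\sigma_i}(w)\in N_\delta(\widetilde\mu)}\exp\bigl(S_i(\widetilde f,w)\bigr)=\sum_{w_1\in G_i}\exp\bigl(S_i(f,w_1)\bigr)\,\bigl|\{w_2:\xi_{\sigma_i}(w_1,w_2)\in N_\delta(\widetilde\mu)\}\bigr|. \]
Here $\sum_{w_1\in G_i}\exp(S_i(f,w_1))=\exp\bigl(|V_i|\,P(f,N_{\delta'}(\mu),\sigma_i)\bigr)$ by definition, so it remains to bound the inner count from below, uniformly in $w_1\in G_i$, by $\tfrac12 2^{|V_i|}$ for all large $i$. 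To do this I would draw $w_2$ according to $\beta$ on $\{\symb0,\symb1\}^{V_i}$. For any finite window $F\Subset\Gamma$ and any $(F\cup F^{-1})$-good vertex $v$, the coordinates $(w_2(\sigma_i(g^{-1})v))_{g\in F}$ are i.i.d.\ uniform, so the conditional law of $(\xi_{\sigma_i,v}(w_1,w_2))|_F$ is the product of the deterministic first coordinate with $\beta|_F$; averaging over $v$ and using $\xi_{\sigma_i}(w_1)\approx\mu$ shows $\mathbb{E}_{w_2}[\xi_{\sigma_i}(w_1,w_2)]$ is arbitrarily $d_P$-close to $\mu\times\beta$ once $i$ is large. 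Concentration then comes from a bounded-difference estimate: altering one coordinate of $w_2$ changes at most $|F|$ of the pullback names, hence changes each cylinder probability $\xi_{\sigma_i}(w_1,w_2)(C)$ by at most $|F|/|V_i|$, so by McDiarmid's inequality this probability deviates from its mean by more than $\delta$ with probability at most $2\exp(-2\delta^2|V_i|/|F|^2)$. A union bound over the finitely many cylinders controlling the $d_P$-ball $N_\delta(\widetilde\mu)$ shows that the $\beta$-probability of $\xi_{\sigma_i}(w_1,w_2)\in N_\delta(\widetilde\mu)$ tends to $1$ uniformly in $w_1\in G_i$, giving the required count.

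Combining the three estimates with the variational principle and with $P_\Sigma(\Gamma\curvearrowright X,\mu,f)=P_\Sigma(\Gamma\curvearrowright X,f)$ (as $\mu$ is an equilibrium measure) and $\int\widetilde f\,\dd\widetilde\mu=\int f\,\dd\mu$ yields
\[ \log2+P_\Sigma(\Gamma\curvearrowright X,f)\le P_\Sigma(\Gamma\curvearrowright\widetilde X,\widetilde\mu,\widetilde f)\le P_\Sigma(\Gamma\curvearrowright\widetilde X,\widetilde f)\le\log2+P_\Sigma(\Gamma\curvearrowright X,f), \]
so all three coincide and $\widetilde\mu$ is an equilibrium measure for $\widetilde f$. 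The main obstacle is the concentration step in the lower bound: the projection bookkeeping is routine, but extracting the full factor $\log2$ requires showing that a $\beta$-random second coordinate almost surely produces a product-like empirical distribution, which is precisely where the independence on good vertices and the McDiarmid estimate are needed.
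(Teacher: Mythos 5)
Your proof is correct, but at the crucial point it takes a genuinely different route from the paper's. The paper obtains the lower bound $h_\Sigma(\Gamma\curvearrowright(\widetilde X,\widetilde\mu))= h_\Sigma(\Gamma\curvearrowright(X,\mu))+\log 2$ by citing the product formula for sofic entropy of a direct product with a Bernoulli system (Theorem 6.3 of~\cite{Bowen_2020}), proves the complementary upper bound $h_\Sigma(\Gamma\curvearrowright(\widetilde X,\widetilde\mu'))\le h_\Sigma(\Gamma\curvearrowright(X,\mu'))+\log 2$ for every invariant $\widetilde\mu'$ by exactly your $2^{|V_i|}$ counting, and then closes by comparing $P_{\Sigma}(\widetilde f,\Gamma\curvearrowright(\widetilde X,\widetilde\mu'))$ against $P_{\Sigma}(\widetilde f,\Gamma\curvearrowright(\widetilde X,\widetilde\mu))$ measure by measure. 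You instead reprove the one special case of the product formula that is needed, by extending a good microstate $w_1$ for $\mu$ with a $\beta$-random second coordinate and using independence at good vertices together with a McDiarmid bound to show the extension is a good microstate for $\mu\times\beta$ with probability tending to $1$; and you compare against the topological pressure of $\widetilde X$ rather than against other invariant measures. The two closings are equivalent via the variational principle; your approach buys self-containedness at the cost of carrying out the concentration argument, which is essentially the content of the cited theorem in this case. One small wrinkle: in your displayed identity for the lower bound you take $\delta'$ to be the one furnished by the inclusion $(\pi_1)_*N_\delta(\widetilde\mu)\subset N_{\delta'}(\mu)$, while the concentration step needs $\delta'$ small relative to $\delta$, and these requirements pull in opposite directions. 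This is harmless, since only the inequality $\ge$ is used there (each $w_1\in G_i$ contributes at least $\tfrac12\,2^{|V_i|}$ extensions regardless of whether every contributing pair projects into $G_i$), and $\inf_{\delta'>0}\limsup_i P(f,N_{\delta'}(\mu),\sigma_i)$ recovers $P_\Sigma(f,\Gamma\curvearrowright(X,\mu))$ however small you are forced to take $\delta'$; but you should either drop the word ``equality'' or decouple the choice of $\delta'$ from the projection.
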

		
		\begin{proof}
			Let $\nu$ denote the the uniform Bernoulli measure on $\{\symb{0},\symb{1}\}^{\Gamma}$. As $(\{\symb{0},\symb{1}\}^{\Gamma},\nu)$ is a Bernoulli shift, it follows by Theorem 6.3 of~\cite{Bowen_2020} that the measure-theoretic sofic entropy of the product system can be expressed as follows, \[ h_{\Sigma}( \Gamma \curvearrowright (\widetilde{X},\widetilde{\mu})) = h_{\Sigma}( \Gamma \curvearrowright ({X},{\mu})) + h_{\Sigma}( \Gamma \curvearrowright (\{\symb{0},\symb{1}\}^{\Gamma},\nu)) =h_{\Sigma}( \Gamma \curvearrowright ({X},{\mu})) + \log(2).   \]
			
			Let $\mu'$ be an arbitrary $\Gamma$-invariant measure on $X$. We argue that for every $\Gamma$-invariant measure $\widetilde{\mu}'$ on $\widetilde{X}$ with marginal $\mu'$ on $X$ we have \[  h_{\Sigma}(\Gamma \curvearrowright (\widetilde{X},\widetilde{\mu}')) \leq h_{\Sigma}( \Gamma \curvearrowright ({X},{\mu'}))+\log(2).   \]
			
			This follows form the fact that the number of $w \in (A \times \{\symb{0},\symb{1}\})^{V_i}$ for which $\xi_{\sigma_i}(w) \in N_{\delta}(\widetilde{\mu}')$ is bounded by $2^{|V_i|}$ times the number of microstates $u \in A^{V_i}$ such that $\xi_{\sigma_i}(u) \in N_{\delta}({\mu}')$. 
			
			Using the definition of $\widetilde{f}$, we get that for every $\Gamma$-invariant measure $\widetilde{\mu}'$ with marginal $\mu'$ on $X$, we have \[ \int \widetilde{f}\dd \widetilde{\mu}' = \int{f}\dd {\mu'}.  \] 
			
			Putting together all the previous computations, we get
			\begin{align}
				P_{\Sigma}(\widetilde{f}, \Gamma \curvearrowright (\widetilde{X},\widetilde{\mu}')) & = h_{\Sigma}(\Gamma \curvearrowright (\widetilde{X},\widetilde{\mu}')) + \int \widetilde{f}\dd \widetilde{\mu}'\\
				& = h_{\Sigma}(\Gamma \curvearrowright (\widetilde{X},\widetilde{\mu}')) + \int f\dd {\mu}'\\
				& \leq \log(2) +  h_{\Sigma}(\Gamma \curvearrowright ({X},{\mu}')) + \int f\dd {\mu}' \\
				& \leq \log(2) + h_{\Sigma}(\Gamma \curvearrowright ({X},{\mu})) + \int f\dd {\mu} \\
				& = h_{\Sigma}( \Gamma \curvearrowright (\widetilde{X},\widetilde{\mu})) + \int \widetilde{f}\dd \widetilde{\mu}\\
				& = P_{\Sigma}(\widetilde{f}, \Gamma \curvearrowright (\widetilde{X},\widetilde{\mu}))
			\end{align}
			
			And therefore $\widetilde{\mu}$ is an equilibrium measure for $\widetilde{f}$ on $\widetilde{X}$.\end{proof}
		
		The following result is the main theorem in the case where the function $f$ is locally constant. We shall first show it in this case and then generalize it to larger classes of functions in~\Cref{sec:convexgeneralization}.
		
		\begin{theorem}\label{thm:LR_etale_local}
			Let $\Sigma$ be a sofic approximation sequence for $\Gamma$. Consider a subshift $X\subset A^{\Gamma}$ with $h_{\Sigma}(\Gamma \curvearrowright X)\geq 0$. Let $F\Subset \Gamma$ and suppose $f\colon X \to \RR$ is an $F$-locally constant function and $\mu$ an equilibrium measure on $X$ for $f$. Then $\mu$ is \'etale Gibbs with respect to $f$.
		\end{theorem}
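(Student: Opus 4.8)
The plan is to reduce to the non self-overlapping situation handled by \Cref{lem:noverlap} by enlarging the system, and then to transfer the conclusion back. The obstruction to applying \Cref{lem:noverlap} directly to $\mu$ is that it requires the base point to be non self-overlapping, a property which need not hold $\mu$-almost everywhere on $X$. To circumvent this I would pass to the product system $\widetilde X = X \times \{\symb{0},\symb{1}\}^{\Gamma}$ with the locally constant potential $\widetilde f(x,y)=f(x)$ and the measure $\widetilde\mu = \mu \times \nu$, where $\nu$ is the uniform Bernoulli measure. By \Cref{lem:cheating}, $\widetilde\mu$ is an equilibrium measure for $\widetilde f$, and since $h_{\Sigma}(\Gamma \curvearrowright \widetilde X) \ge h_{\Sigma}(\Gamma \curvearrowright \widetilde X,\widetilde\mu) = h_{\Sigma}(\Gamma \curvearrowright X,\mu)+\log 2 \ge 0$, all hypotheses of \Cref{lem:noverlap} are met for $\widetilde X$, $\widetilde f$ and $\widetilde\mu$.

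First I would observe that $\widetilde\mu$-almost every point of $\widetilde X$ is non self-overlapping. Indeed, \Cref{lem:nooverlap_iid} provides a $\nu$-conull set $X_0\subset\{\symb{0},\symb{1}\}^{\Gamma}$ of non self-overlapping configurations, and whenever $y$ is non self-overlapping the point $(x,y)$ is non self-overlapping in $\widetilde X$: a self-overlap of $(x,y)$ would force the two coordinates to agree with a translate simultaneously, hence in particular would force a self-overlap of the second coordinate $y$. Thus $X\times X_0$ is a $\widetilde\mu$-conull set of non self-overlapping points.

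Next I would prove that $\widetilde\mu$ is étale Gibbs with respect to $\widetilde f$. Applying \Cref{lem:noverlap} at each base point $\omega=(x,y)$ which is non self-overlapping and lies in the support of $\widetilde\mu$ — that is, at $\widetilde\mu$-almost every $\omega$ — yields, for every $\omega'$ with $(\omega,\omega')\in\cT^0(\widetilde X)$, that $\omega'$ lies in the support and that $\widetilde\mu([\omega']_{F_n})/\widetilde\mu([\omega]_{F_n})$ converges along a suitable sequence to the positive finite value $\exp(\Psi_{\widetilde f}(\omega,\omega'))$. The almost-everywhere finiteness of these limiting ratios is precisely what rules out a singular part: for each generating involution $I_{p,q}$ of $\cT^0(\widetilde X)$ a martingale/Lebesgue-decomposition argument shows $\widetilde\mu\circ I_{p,q}\ll\widetilde\mu$ (the set on which the conditional-density martingale diverges to $+\infty$ is null because the limit is finite a.e.), and applying this to $I_{p,q}$ and to its inverse gives mutual absolute continuity, so $\widetilde\mu$ is non-singular with respect to $\cT^0(\widetilde X)$. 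By \Cref{lem:RN-cocycle-form-MCT} the Radon–Nikodým cocycle then equals the net limit $\lim_{F\nearrow\Gamma}\widetilde\mu([\omega']_F)/\widetilde\mu([\omega]_F)$, which must coincide with the value $\exp(\Psi_{\widetilde f}(\omega,\omega'))$ obtained along the subsequence; hence $\widetilde\mu$ is étale Gibbs with respect to $\widetilde f$.

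Finally I would transfer this to $\mu$. For any $(x,x')\in\cT^0(X)$ and any $y\in\{\symb{0},\symb{1}\}^{\Gamma}$ one checks that $((x,y),(x',y))\in\cT^0(\widetilde X)$ and $\Psi_{\widetilde f}((x,y),(x',y))=\Psi_f(x,x')$, since interchangeability in the product along the $X$-coordinate is equivalent to interchangeability in $X$ while the Bernoulli coordinate is unconstrained. Because $\widetilde\mu=\mu\times\nu$, the Radon–Nikodým cocycle of $\widetilde\mu$ for these product moves equals that of $\mu$ for $\cT^0(X)$, the unchanged Bernoulli coordinate contributing a factor $1$, and non-singularity of $\mu$ with respect to $\cT^0(X)$ follows from that of $\widetilde\mu$ together with $\widetilde\mu(B\times\{\symb{0},\symb{1}\}^{\Gamma})=\mu(B)$. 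A Fubini argument then promotes the étale-Gibbs identity from $\widetilde\mu$-almost every $(x,y)$ to $\mu$-almost every $x$, giving that $\mu$ is étale Gibbs with respect to $f$. The main obstacle in this scheme is the passage from the pointwise limiting ratios supplied by \Cref{lem:noverlap} to genuine non-singularity and to the identification of the Radon–Nikodým cocycle via \Cref{lem:RN-cocycle-form-MCT}; the rest is bookkeeping around the product construction.
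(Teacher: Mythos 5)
Your proposal is correct and follows essentially the same route as the paper: the same product trick with the Bernoulli factor (\Cref{lem:cheating}, \Cref{lem:nooverlap_iid}), the same application of \Cref{lem:noverlap} at non self-overlapping base points in the support of $\widetilde{\mu}$, and the same identification of the Radon--Nikod\'ym cocycle via \Cref{lemma:RN-cocycle-form-MCT}. The only differences are minor: the paper transfers the conclusion back to $\mu$ by directly cancelling the equal $\nu$-measures of the Bernoulli cylinders in the ratio $\widetilde{\mu}([(y,y')]_{F_n})/\widetilde{\mu}([(x,x')]_{F_n})$ rather than first establishing that $\widetilde{\mu}$ is \'etale Gibbs and then applying Fubini to first-coordinate moves, and your explicit martingale argument for non-singularity of $\widetilde{\mu}$ with respect to $\cT^0(\widetilde{X})$ spells out a step the paper leaves implicit in its appeal to \Cref{lemma:RN-cocycle-form-MCT}.
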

		
		\begin{proof}
			Let $(x,y)\in \cT^0(X)$ such that $x$ is in the support of $\mu$. Let $\widetilde{X},\widetilde{f}$ and $\widetilde{\mu}$ as in~\Cref{lem:cheating}. Then $\widetilde{\mu}$ is an equilibrium measure for $\widetilde{f}$ on $X$. Now let $(x',y')\in \cT(\{\symb{0},\symb{1}\}^{\Gamma}) = \cT^0(\{\symb{0},\symb{1}\}^{\Gamma})$ such that $x'$ is non self-overlapping (its existence assured by~\Cref{lem:nooverlap_iid}).

			Then it follows that $((x,x'),(y,y')) \in \cT^0(\widetilde{X})$, $(x,x')$ is in the support of $\widetilde{\mu}$, and $(x,x')$ is non self-overlapping. %
			By~\Cref{lem:noverlap} there exists a sequence of finite sets $F_n \Subset \Gamma$ increasing to $\Gamma$ such that 
			\[ \lim_{n \to \infty} \frac{ \widetilde{\mu}([(y,y')]_{F_n})   }{ \widetilde{\mu}([(x,x')]_{F_n})   }= \exp(\Psi_{\widetilde{f}} ((x,x'),(y,y'))). \]

			Let $\nu$ be the uniform Bernoulli measure on $\{\symb{0},\symb{1}\}^{\Gamma}$. By definition we have $\widetilde{\mu}=\mu \times \nu$, therefore on the one hand we get for any $K \Subset \Gamma$ \[  \frac{ \widetilde{\mu}([(y,y')]_K)   }{ \widetilde{\mu}([(x,x')]_K)   } = \frac{ {\mu}([y]_K)\nu([y']_K)   }{ {\mu}([x]_K) \nu([x']_K)  } = \frac{ {\mu}([y]_K)   }{ {\mu}([x]_K)  }. \]
			On the other hand, using that $\widetilde{f}(x,x')=f(x)$ for every $(x,x')\in \widetilde{X}$, we get \[ \exp(\Psi_{\widetilde{f}} ((x,x'),(y,y'))) = \exp(\Psi_{f} (x,y)).  \]
			Hence we conclude that \[ \lim_{n \to \infty} \frac{ {\mu}([y]_{F_n})   }{ {\mu}([x]_{F_n})  } =  \exp(\Psi_{f} (x,y)).  \]
			For every $x$ in the support of $\mu$. By~\Cref{lemma:RN-cocycle-form-MCT} it follows that there is a Borel set $X'\subset X$ with $\mu(X\setminus X')=0$ such that for every $(x,y)\in X \times X' \cap \cT^0(X)$ we have
			
			\[ D_{\mu,\cT^0(X)}(x,y) = \lim_{F \nearrow \Gamma}\frac{ {\mu}([y]_{F})   }{ {\mu}([x]_{F})}. \]
			
			From the above we obtain that up to a $\mu$-null set \[ D_{\mu,\cT^0(X)}(x,y) = \exp(\Psi_f(x,y)).  \]  
			And thus $\mu$ is étale Gibbs for $f$.
		\end{proof}
		
		Before closing up this section, we show that the condition that $h_{\Sigma}(\Gamma \curvearrowright X)\geq 0$ cannot be removed.
		
		\begin{example}
			Let $F_2$ be the free group on two generators $a,b$ and consider the subshift of finite type $X \subset (\ZZ/4\ZZ)^{F_2}$ given by the algebraic condition \[ X = \{x \in (\ZZ/4\ZZ)^{F_2} : x(gs) = x(g)+1 \bmod{2}, \mbox{ for } s \in \{a,b\}    \}.   \]
			This action is topologically conjugate to direct product of the full $F_2$-shift on two symbols times the action on $\ZZ/2\ZZ$ on which both generators act non-trivially. It is known~\cite[Theorem 4.1]{Bowen_2020} that if $\Sigma$ is a sofic approximation sequence which is ``far from bipartite'' (for instance one obtained by choosing the homomorphisms $\sigma_i\colon F_2 \to \Sym(V_i)$ to be generated by $\sigma_i(a)$ and $\sigma_i(b)$ and choosing those uniformly at random) then $F_2 \curvearrowright \ZZ/2\ZZ$ admits no models for large enough $i$ and thus $h_{\Sigma}(\Gamma \curvearrowright X) = -\infty$. In this setting, every invariant measure is an equilibrium measure for $\Sigma$ (with respect to $f=0$) but the unique Gibbs measure of the system is the product of the uniform Bernoulli measure on the full $F_2$-shift and the uniform measure on $\ZZ/2\ZZ$.
		\end{example}
		
		\section{Lanford--Ruelle theorem beyond locally constant functions}\label{sec:convexgeneralization}
		
		In this section we extend the conclusion of \Cref{thm:LR_etale_local} beyond locally constant functions, and in particular provide a proof of~\Cref{thm:LR_norm_summable_interaction}, and more generally of~\Cref{thm:LR_etale_good}. The argument is essentially an application of abstract functional analysis of convex functions on topological vector spaces. Our basic strategy here closely follows the original approach of Lanford and Ruelle (see Appendix A of~\cite{LanfordRuelle1969}), with a slight deviation which we discuss below.
		
		By a continuous norm on a topological vector space $V$ we mean a function $\|\cdot\|\colon V \to [0,\infty)$ satisfying the axioms of a norm which is continuous with respect to the topology that makes $V$ a topological vector space. Note that the topology induced on $V$ by the norm $\|\cdot\|$ need not coincide with the given topology of $V$. A typical example of a topological vector space that admits a continuous norm is $C^{\infty}([0,1])$, where the uniform norm is one example for a continuous norm.  The following  functional-analytic result is the essence of our argument:

		\begin{proposition}\label{prop:dense_tangent}
			Let $V$ be a topological vector space which admits a continuous norm. Let $\overline{V}$ denote the completion of $V$ with respect to that norm. Let $f\colon \overline{V} \to \mathbb{R}$ be a continuous convex function, and $\psi\colon V \to \overline{V}$ be a continuous function. Suppose that there exists a dense subset $A \subset V$ such that if $\bar{v} \in A$, then there exists a tangent functional $\bar{w} \in \overline{V}^*$ of $f$ at $\bar{v}$ with norm at most $1$, such that $\bar{w}(\psi(\bar{v})) \ge 0$. Then for every $v \in V$ and every tangent functional $w \in \overline{V}^*$ of $f$ at $v$ we have $w(\psi(v)) \ge 0$.
		\end{proposition}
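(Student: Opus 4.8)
The plan is to turn the statement into an assertion about the minimum of a single weak-$*$ continuous functional over a subdifferential, and to realise that minimum as a limit of the tangent functionals furnished by the hypothesis, approached from a carefully chosen direction. Throughout I use that, since $f$ is convex and continuous on the Banach space $\overline{V}$, the set $\partial f(u)$ of tangent functionals at any point $u$ is nonempty, convex and weak-$*$ compact, with norms bounded by the local Lipschitz constant of $f$ near $u$. Fix $v \in V$. Since $w \mapsto w(\psi(v))$ is weak-$*$ continuous and linear on the weak-$*$ compact set $\partial f(v)$, it attains a minimum $m$ there, and the desired conclusion is precisely that $m \ge 0$.

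The elementary tool I would isolate is the norm-to-weak-$*$ upper semicontinuity of the subdifferential: if $u_j \to u$ in norm, $w_j \in \partial f(u_j)$ and $w_j \to w$ weak-$*$, then letting $j$ grow in the subgradient inequality $f(z)-f(u_j) \ge w_j(z-u_j)$ gives $w \in \partial f(u)$; and if in addition $\psi(u_j) \to \psi(u)$ in norm with $w_j(\psi(u_j)) \ge 0$, the same estimate gives $w(\psi(u)) \ge 0$. With this in hand, the first step upgrades the hypothesis from the dense set $A$ to all of $V$: given $u \in V$, take a net $a_j \to u$ in $V$ with $a_j \in A$, use the hypothesis to obtain $\bar{w}_j \in \partial f(a_j)$ with $\|\bar{w}_j\| \le 1$ and $\bar{w}_j(\psi(a_j)) \ge 0$, pass to a weak-$*$ cluster point by Banach--Alaoglu, and apply the tool (using continuity of $\psi$, so $\psi(a_j) \to \psi(u)$ in norm) to get $w_u \in \partial f(u)$ with $\|w_u\| \le 1$ and $w_u(\psi(u)) \ge 0$.

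The decisive idea is to approach $v$ along the direction $-\psi(v)$, so that the functionals produced above converge to a \emph{minimizer} of $w \mapsto w(\psi(v))$. Since $\psi(v)$ need not lie in $V$, I would choose $h_n \in V$ with $\|h_n + \psi(v)\| \to 0$. For fixed $n$ the points $v + t h_n$ $(t>0)$ lie in $V$, so the first step gives $w_{n,t} \in \partial f(v + t h_n)$ with $\|w_{n,t}\| \le 1$ and $w_{n,t}(\psi(v + t h_n)) \ge 0$, while monotonicity of the subdifferential yields $w_{n,t}(h_n) \ge w'(h_n)$ for every $w' \in \partial f(v)$. Letting $t \to 0^+$ (so $v + t h_n \to v$ in $V$ and $\psi(v+t h_n) \to \psi(v)$ in norm) and taking a weak-$*$ cluster point produces $w_n \in \partial f(v)$ with $\|w_n\| \le 1$, $w_n(\psi(v)) \ge 0$ and $w_n(h_n) \ge w'(h_n)$ for all $w' \in \partial f(v)$. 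Finally, a weak-$*$ cluster point $w_\infty$ of $(w_n)$ satisfies $w_\infty(\psi(v)) \ge 0$; writing $h_n = -\psi(v) + \varepsilon_n$ with $\|\varepsilon_n\| \to 0$ and using $\|w_n\| \le 1$, the inequality $w_n(h_n) \ge w'(h_n)$ rearranges to $w_n(\psi(v)) \le w'(\psi(v)) + (1+\|w'\|)\|\varepsilon_n\|$, whence in the limit $w_\infty(\psi(v)) \le w'(\psi(v))$ for every $w' \in \partial f(v)$. Thus $w_\infty$ attains the minimum $m$, and $m = w_\infty(\psi(v)) \ge 0$, giving the claim.

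I expect the main obstacle to be exactly the gap between the purely existential hypothesis (one good tangent functional at each point of $A$) and the universally quantified conclusion (every tangent functional at every point of $V$), aggravated by the fact that the natural direction of approach $-\psi(v)$ lies in $\overline{V}$ rather than in $V$. The resolution is to approach along nearby admissible directions $h_n \in V$ and to interchange the weak-$*$ limit of the functionals with the norm limit of the directions; the uniform bound $\|w_n\| \le 1$ is precisely what makes the cross terms $w_n(\varepsilon_n)$ uniformly negligible and allows this interchange to go through.
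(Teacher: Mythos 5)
Your proof is correct and follows essentially the same route as the paper's: first upgrade the hypothesis from the dense set $A$ to all of $V$ by a weak-$*$ compactness argument, then approach $v$ along the line through a $V$-approximant of $\psi(v)$ and use monotonicity of the subdifferential (the paper phrases this via one-sided derivatives of the scalar convex function $t \mapsto f(v+tv')$, which is the same fact) to transfer the sign condition from the nearby ``good'' tangent functionals to the given one. Your double-limit construction of an explicit minimizing subgradient $w_\infty$ is a slightly more elaborate packaging of the paper's direct $\varepsilon$-estimate, but the underlying argument is identical.
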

		\begin{proof}
			The first step in the proof is to show that the set $V_0 \subset V$ consisting of those $v \in V$ for which there exists a tangent functional $w \in \overline{V}^*$ of $f$ at $v$ with norm at most $1$ such that $w(\psi(v)) \ge 0$, is closed. Since $A \subset V_0$ and $A$ is dense in $V$, it would follow that $V_0 =V$.
			
			Fix $v \in V$. Let $(v_\alpha)_{\alpha \in \mathcal{D}}$ be a net which takes values on $A$ and converges to $v$. For each $\alpha \in \mathcal{D}$, choose a tangent functional $w_\alpha \in \overline{V}^*$ of $f$ at $v_\alpha$ with norm at most $1$, such that $w_\alpha(\psi(v_\alpha)) \ge 0$. By weak-$*$ compactness of the unit ball in $\overline{V}^*$ we can pass to a subnet and assume that $(w_\alpha)_{\alpha \in \mathcal{D}}$ converges in the weak-$*$ topology to some $w \in \overline{V}^*$. Then $w$ is a tangent functional of $f$ at $v$ whose norm is bounded by $1$.
			
			Fix $\varepsilon >0$. By continuity of $\psi$, it follows that {$(\psi(v_\alpha))_{\alpha \in \mathcal{D}}$} converges to $\psi(v)$. {Since all $w_\alpha$'s have norm at most $1$, $(w_\alpha(\psi(v_\alpha))-w_\alpha(\psi(v)) )_{\alpha \in \mathcal{D}}$} converges to $0$. Furthermore, as $w_\alpha(\psi(v_\alpha)) \geq 0$, it follows that $\liminf_{\alpha}w_\alpha(\psi(v)) \ge 0$. Finally, as $(w_\alpha)_{\alpha \in \mathcal{D}}$ converges to $w$ in the weak-$*$ topology, it follows that $(w_\alpha(\psi(v)))_{\alpha \in \mathcal{D}}$ converges to $w(\psi(v))$, so $w(\psi(v)) \ge 0$ and thus $v \in V_0$. This shows that $V_0$ is closed. 

			Now fix some arbitrary $v \in V$ and suppose $\omega \in \overline{V}^*$ is a tangent functional for $f$ at $v$. Our goal is to prove that $\omega(\psi(v)) \ge 0$. Let $\varepsilon >0$ be arbitrary. It suffices to prove that $\omega(\psi(v)) + \varepsilon > 0$. Choose $v' \in V$ such that $\|\psi(v)-v'\| < \frac{\varepsilon}{2\norm{w}^*}$. Since $|\omega(\psi(v))-\omega(v')| \le \norm{w}^*\|\psi(v)-v'\| < \frac{\varepsilon}{2}$, it suffices to show that 
			$\omega(v') +\frac{\varepsilon}{2} \geq 0$.
			
			The function $F\colon \mathbb{R} \to \mathbb{R}$ given by $F(t)=f(v+tv')$ is convex. Now let $w_t \in \overline{V}^*$ be a tangent functional at $v+tv'$, it follows that for any $t' \in \RR$ we have $F(t+t')-F(t)\geq w_t(t'v')$. In particular, if $F$ is differentiable at $t$ then for every tangent $w_t$ of $f$ at $v+tv'$ we have $w_t(v')=F'(t)$. By convexity of $F$ it follows that the left derivative of $F$ at $0$, denoted by $F'(0^-)$,  satisfies $F'(t) \le  F'(0^-)$ for every $t<0$ where $F$ is differentiable, which occurs for all but a countable set of $t$'s. 
			Since $w(v') \ge F'(0^-)$ we have in 
			particular:
			
			\[  w(v') \geq \liminf_{ t\to 0^-} w_t(v')  \]
			
			Using the result from the first part of the proof, for any $t \in \RR$ we may choose a tangent $w_t$ of $f$ at $v+ tv'$ with norm at most $1$, such that $w_t(\psi(v+tv')) \ge 0$. We have the following estimates:

			\[|w_t(\psi(v+tv')) - w_t(v')| \le \|\psi(v+tv')- v'\| \le \|\psi(v+tv')-\psi(v)\|+ \frac{\varepsilon}{2}.\]
			The  inequality on the left follows because $\norm{w_t}^*\leq 1$ for every $t \in \mathbb{R}$. As both $\psi\colon V \to \overline{V}$ and the norm $\| \cdot \|$ on $V$ are continuous, it follows that $\|\psi(v+tv')-\psi(v)\|$ converges to $0$ as $t$ converges to $0$. From here we obtain that $\liminf_{ t\to 0^-}w_t(v') +\frac{\varepsilon}{2} \geq 0$. Thus, $w(v') + \frac{\varepsilon}{2} \geq 0$.
		\end{proof}
		
		The original argument of Lanford and Ruelle from~\cite{LanfordRuelle1969} implicitly proves a rather similar statement. A fundamental difference is that in the argument involved in their article, the topological vector space $V$ has additional properties that guarantee that the convex function $f$ is differentiable at a residual set of points. Next a variant of Mazur's lemma proven in~\cite{LanfordRobinson1968} is invoked to show that every tangent functional of $f$ at any $v \in V$ is in the closed convex-hull of tangent functionals at differentiable points, from which the conclusion follows.
		In contrast, in~\Cref{prop:dense_tangent} above we do not assume much knowledge about the topological and functional-analytic properties of the topological vector space $V$ itself.
		
		In order to apply~\Cref{prop:dense_tangent} to our setting, we need the following characterization of measures which are \'etale Gibbs:
		
		\begin{lemma}\label{lem:etale_gibbs_reformulation}
			Let $f\colon X \to \mathbb{R}$ be such that the $\mathcal{T}(X)$-cocycle $\Psi_f$ is well defined and continuous.
			A probability measure $\mu$ is \'etale Gibbs for $f$ if and only if for
			any homeomorphism $\gamma\colon X \to X$ such that $(x,\gamma(x)) \in \mathcal{T}^0(X)$ for $\mu$-almost every $x \in X$, and any $h \in C(X)$  we have
			\[\int_X  h(\gamma(x)) - \exp(\Psi_f(x,\gamma(x)))h(x)\dd \mu \ge 0.\]
		\end{lemma}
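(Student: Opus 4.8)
The plan is to unwind the definition of ``étale Gibbs'' into its integrated form through the Radon--Nikod\'ym cocycle, proving the two implications separately. Call a homeomorphism $\gamma\colon X\to X$ \emph{admissible} if $(x,\gamma(x))\in\cT^0(X)$ for $\mu$-almost every $x$; observe that $\gamma^{-1}$ is then admissible, that the generating involutions $I_{p,q}$ from \Cref{sec:etale} are admissible, and that for each fixed admissible $\gamma$ the quantity $\int_X h(\gamma(x))-\exp(\Psi_f(x,\gamma(x)))h(x)\,\dd\mu$ is \emph{linear} in $h\in C(X)$. Since $C(X)$ is a vector space and contains $-h$ whenever it contains $h$, the stated inequality holding for every $h$ is equivalent to the corresponding \emph{equality} holding for every $h$. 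Thus it suffices to characterise étale Gibbsianness through the family of equalities $\int_X h\circ\gamma\,\dd\mu=\int_X\exp(\Psi_f(\cdot,\gamma(\cdot)))\,h\,\dd\mu$, indexed by the admissible $\gamma$. Note also that by hypothesis $\Psi_f$ is a well-defined continuous $\cT(X)$-cocycle, hence a continuous cocycle on the subrelation $\cT^0(X)$, so $\exp(\Psi_f)$ is a continuous $\cT^0(X)$-cocycle.

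For the forward implication, assume $\mu$ is étale Gibbs, so $\mu$ is non-singular with respect to $\cT^0(X)$ and $\mathcal{D}_{\mu,\cT^0(X)}=\exp(\Psi_f)$ up to a $\mu$-null set. Fix an admissible $\gamma$. By the defining property of the Radon--Nikod\'ym cocycle applied to the Borel bijection $\gamma$, the density of the image of $\mu$ under $\gamma$ relative to $\mu$ equals, $\mu$-almost everywhere, the value of $\exp(\Psi_f)$ along the graph of $\gamma$. Integrating an arbitrary $h\in C(X)$ and rewriting the transported integral via the change-of-variables formula for the image measure then produces the asserted equality; the only delicate point is keeping track of the orientation with which the cocycle is evaluated along $\gamma$ in this change of variables, which is routine bookkeeping. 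In particular the inequality in the statement holds.

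For the converse, suppose the equality holds for every admissible $\gamma$ and every $h\in C(X)$. I first extract non-singularity by testing against the generating involutions. Applying the hypothesis to $I_{p,q}$ (an involution, so that no orientation ambiguity arises) says exactly that the image measure $(I_{p,q})_\ast\mu$ is absolutely continuous with respect to $\mu$ with density $\exp(\Psi_f(\cdot,I_{p,q}(\cdot)))$. As $\Psi_f$ is continuous, this density is bounded and strictly positive on the support of the involution; applying the statement to $I_{p,q}$ a second time gives that $\mu$ and $(I_{p,q})_\ast\mu$ are mutually absolutely continuous, i.e. $\mu$ is quasi-invariant under every $I_{p,q}$. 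Since $\cT^0(X)$ is the orbit equivalence relation of the countable group of homeomorphisms generated by the maps $I_{p,q}$, it follows that $\mu$ is non-singular with respect to $\cT^0(X)$.

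It remains to identify the cocycle. The previous step shows that for each generator $I_{p,q}$ the Radon--Nikod\'ym derivative $\frac{\dd\,\mu\circ I_{p,q}}{\dd\mu}$ agrees $\mu$-almost everywhere with $\exp(\Psi_f(\cdot,I_{p,q}(\cdot)))$, that is, the two cocycles $\mathcal{D}_{\mu,\cT^0(X)}$ and $\exp(\Psi_f)$ coincide on the graph of every generator. As recalled before \Cref{lemma:RN-cocycle-form-MCT}, the Radon--Nikod\'ym cocycle of a non-singular measure on a countable Borel equivalence relation is essentially uniquely determined by the Radon--Nikod\'ym derivatives of a family of Borel bijections generating the relation \cite{FeldmanMoore1977}; since $\exp(\Psi_f)$ is a continuous $\cT^0(X)$-cocycle, the two cocycles therefore coincide up to a $\mu$-null set on all of $\cT^0(X)$, which is precisely the assertion that $\mu$ is étale Gibbs for $f$. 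I expect the main obstacle to lie entirely in this converse: one must promote the weak integral condition on generators to genuine (two-sided) non-singularity and then invoke the Feldman--Moore determinacy of the cocycle from its generators, all while controlling the various $\mu$-null sets; the forward direction, by contrast, is a direct consequence of the integrated form of the cocycle identity.
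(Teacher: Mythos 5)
Your proposal is correct and follows essentially the same route as the paper: linearity in $h$ upgrades the inequality to an equality, the forward direction is the change-of-variables identity for the Radon--Nikod\'ym cocycle, and the converse identifies the densities along the generating involutions $I_{p,q}$ and invokes the Feldman--Moore fact that the cocycle is essentially determined by a generating family of bijections. Your treatment of the converse is slightly more careful than the paper's (you extract non-singularity explicitly before identifying the cocycle), but this is a difference of detail, not of method.
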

		\begin{proof}
			On the one hand, if $\mu$ is \'etale Gibbs for $f$, it follows that \[ D_{\mu,\cT^0(X)}(x,y) = \exp(\Psi_f(x,y)) \mbox{ up to a $\mu$-null set}.   \]
			Therefore for any homeomorphism $\gamma$ of $X$ which preserves $\cT^0(X)$ and $h \in C(X)$ we have \[ \int_{X} h(\gamma(x)) \dd \mu  = \int_{X} h(x) \dd (\mu \circ \gamma) = \int_X \exp(\Psi_f(x,\gamma(x))) h(x) \dd \mu. \]
			From here the inequality follows (as an equality). Conversely, replacing $h$ by $-h$, for every $h \in C(X)$ and homeomorphism $\gamma$ of $X$ which preserves $\cT^0(X)$, we obtain the equality \[\int_X  h(\gamma(x)) \dd \mu = \int_X \exp(\Psi_f(x,\gamma(x)))h(x)\dd \mu.\]
			Because $\mu$ is a Borel measure on $X$, the Radon-Nikod\'ym derivative of a Borel measure $\nu \ll \mu$ is characterized (up to a $\mu$-null set) by the property that $\int h d\nu = \int h \frac{d \nu}{d \mu} d\mu$ for any  $h \in C(X)$.
			It follows that for any homeomorphism $\gamma$ of $X$ which preserves $\cT^0(X)$ we have $\frac{d \mu \circ \gamma}{d \mu}(x) = \exp(\Psi_f(x,\gamma(x)))$ for $\mu$-almost every $x \in X$. Because the equivalence relation $\cT^0(X)$ is the orbit equivalence of the  homeomorphisms $I_{p,q}$, it follows that $\exp(\Psi_f(x,y))$ is the Radon-Nikod\'ym cocycle of $\mu$ with respect to the equivalence relation $\cT^0(X)$ and thus that $\mu$ is \'etale Gibbs for $f$.
		\end{proof}
	
		Let us recall that the metric $\rho_{\mathcal{T}^0(X)}$ is defined on all $f,g \in C(X)$ such that the $\mathcal{T}^0(X)$-cocycles $\Psi_f,\Psi_g$ are well defined and continuous. It is given by 
		\[
		\rho_{\mathcal{T}^0(X)}(f,g)=\|f-g\|_\infty + d_{\mathcal{T}^0(X)}(\Psi_f,\Psi_g).\]
		Where $d_{\mathcal{T}^0(X)}$ is the metric from~\Cref{def:metric_T_cocycles}.
		
		Th definition of $\rho_{\mathcal{T}^0(X)}$ implicitly sets the stage for a space of potentials where our extension scheme applies.

		\begin{theorem}\label{thm:LR_etale_good_bis}\textbf{(\Cref{thm:LR_etale_good})}
			Let $\Sigma$ be a sofic approximation sequence for $\Gamma$, $X$ be a subshift such that $h_{\Sigma}(\Gamma \curvearrowright X)\geq 0$, and let $f \in C(X)$ be a  $\rho_{\mathcal{T}^0(X)}$-limit of locally constant functions. Then any equilibrium measure $\mu$ on $X$ for $f$ with respect to $\Sigma$ is \'etale Gibbs with respect to $f$. 
		\end{theorem}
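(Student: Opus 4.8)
The plan is to recast the étale Gibbs condition as a family of sign conditions on tangent functionals of the sofic pressure, and then promote the already-established locally constant case (\Cref{thm:LR_etale_local}) to the $\rho_{\cT^0(X)}$-closure of the locally constant functions by means of the abstract \Cref{prop:dense_tangent}. Let $V$ be the $\rho_{\cT^0(X)}$-closure of the locally constant functions inside the space of continuous functions whose $\cT^0(X)$-cocycle is well defined and continuous. Equipped with $\rho_{\cT^0(X)}$ this is a topological vector space on which $\|\cdot\|_\infty$ is a continuous norm, and since locally constant functions are already $\|\cdot\|_\infty$-dense in $C(X)$, the completion $\overline V$ of $V$ with respect to $\|\cdot\|_\infty$ is $C(X)$. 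By hypothesis $f \in V$. On $\overline V = C(X)$ the relevant convex function is the sofic pressure $\Pi = P_\Sigma(\Gamma \curvearrowright X, \cdot)$, which is finite (since $0 \le h_\Sigma(\Gamma\curvearrowright X) \le \log|A|$), convex and $1$-Lipschitz by \Cref{prop:tangent_functionals_eq_measures}; in particular every tangent functional of $\Pi$ has dual norm at most $1$.

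For each interchangeable pair $(p,q)$ with associated involution $I_{p,q}$ and each $h \in C(X)$, I would define $\psi_{p,q,h}\colon V \to C(X)$ by
\[ \psi_{p,q,h}(v) = h\circ I_{p,q} - \exp\!\big(\Psi_v(\cdot, I_{p,q}(\cdot))\big)\, h. \]
Because $\{(x,I_{p,q}(x)) : x \in X\}$ is contained in the single compact piece $\cT^0_F(X)$, the very definition of $\rho_{\cT^0(X)}$ (uniform convergence of cocycles on each $\cT^0_{F_n}(X)$, see \Cref{def:metric_T_cocycles}) guarantees that $v \mapsto \Psi_v(\cdot,I_{p,q}(\cdot))$ is continuous from $(V,\rho_{\cT^0(X)})$ into $C(X)$; composing with $\exp$ and multiplying by the bounded function $h$ shows $\psi_{p,q,h}$ is continuous into $(C(X),\|\cdot\|_\infty)$. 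This continuity is precisely what the metric $\rho_{\cT^0(X)}$ was designed to deliver, and checking it is the technical heart of the argument.

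Next I would supply the dense set required by \Cref{prop:dense_tangent}, namely $A = \{\text{locally constant functions}\}$, which is dense in $V$ by construction. For a locally constant $\bar v$, since a subshift is expansive the measure-theoretic sofic entropy is upper semi-continuous on the compact set $\Prob_\Gamma(X)$ (nonempty because $h_\Sigma(\Gamma\curvearrowright X)\ge 0$), so $\mu \mapsto h_\Sigma(\Gamma\curvearrowright X,\mu)+\int \bar v\,\dd\mu$ attains its supremum and an equilibrium state $\mu_{\bar v}$ exists. By \Cref{prop:tangent_functionals_eq_measures}(6) this probability measure is a tangent functional of $\Pi$ at $\bar v$, of dual norm $1$, and by \Cref{thm:LR_etale_local} it is étale Gibbs for $\bar v$; hence \Cref{lem:etale_gibbs_reformulation} gives $\mu_{\bar v}\big(\psi_{p,q,h}(\bar v)\big)\ge 0$. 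This is exactly the density hypothesis of \Cref{prop:dense_tangent} for the map $\psi_{p,q,h}$.

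Applying \Cref{prop:dense_tangent} then yields $w(\psi_{p,q,h}(v)) \ge 0$ for every $v \in V$ and every tangent functional $w$ of $\Pi$ at $v$. Specializing to $v = f \in V$ and $w = \mu$ — legitimate because the equilibrium measure $\mu$ for $f$ is a tangent functional of $\Pi$ at $f$ by \Cref{prop:tangent_functionals_eq_measures}(6) — gives $\int_X h(I_{p,q}(x)) - \exp(\Psi_f(x,I_{p,q}(x)))h(x)\,\dd\mu \ge 0$ for every $(p,q)$ and every $h \in C(X)$. Replacing $h$ by $-h$ promotes this to an equality, so $\frac{\dd\mu\circ I_{p,q}}{\dd\mu} = \exp(\Psi_f(\cdot, I_{p,q}(\cdot)))$ holds $\mu$-almost everywhere; as the $I_{p,q}$ generate $\cT^0(X)$, the Feldman--Moore description of the Radon--Nikodým cocycle (exactly as in the proof of \Cref{lem:etale_gibbs_reformulation}) shows $\exp(\Psi_f)$ is the $\cT^0(X)$-cocycle of $\mu$, that is, $\mu$ is étale Gibbs for $f$. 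The main obstacle is the continuity of $\psi_{p,q,h}$ in the $\rho_{\cT^0(X)}$-topology together with the bookkeeping that makes $V$, its continuous norm, and its completion fit the abstract hypotheses of \Cref{prop:dense_tangent}; the existence of equilibrium states for the approximating locally constant functions, which rests on expansiveness, is the second point requiring care.
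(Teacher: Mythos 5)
Your proposal is correct and follows essentially the same route as the paper: the same space $V$ with completion $C(X)$, the same appeal to expansiveness and \Cref{thm:LR_etale_local} to verify the density hypothesis of \Cref{prop:dense_tangent}, and the same passage through \Cref{lem:etale_gibbs_reformulation} (your specialization to the generators $I_{p,q}$ rather than arbitrary $\cT^0(X)$-preserving homeomorphisms is a harmless variant, since the converse direction of that lemma already reduces to the generators via Feldman--Moore). Your inclusion of the $\exp$ in the definition of $\psi$ is the correct reading of the test functional demanded by \Cref{lem:etale_gibbs_reformulation}.
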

		
		\begin{proof}
			Let $V$ denote the space of functions $f \in C(X)$ that are  $\rho_{\mathcal{T}^0(X)}$-limit of locally constant functions. Then $\rho_{\mathcal{T}^0(X)}$ induces a metric on $V$ which makes it a topological vector space. It is clear that the locally constant functions are dense in $V$ and that the uniform norm is continuous on $V$. Since the locally constant functions are also dense in $C(X)$, it follows that the metric completion $\overline{V}$ of $V$ with respect to the uniform norm is (up to a natural isomorphism) $C(X)$. Also the map $f \mapsto \Psi_f$ is well defined on $V$ and defines a continuous linear function from $V$ to the space of continuous $\mathcal{T}^0(X)$-cocycles.
			
			By~\Cref{prop:tangent_functionals_eq_measures}, the map $\Pi \colon C(X) \to \RR$ given by $\Pi(f) = P_{\Sigma}(\Gamma \curvearrowright X,f)$ is a convex continuous map and the equilibrium measures for $f \in C(X)$ are all tangent functionals of $\Pi$ at $f$. 
			Choose $f_0\in V$ and let $\gamma$ be a homeomorphism of $X$ such that $(x,\gamma(x))\in \cT^0(X)$ for every $x \in X$, and let $h \in C(X)$.
			
			Let $A$ be the subspace of $V$ given by the locally constant functions and $\psi\colon V \to C(X)$ be given by
			$\psi(f)(x) = h(\gamma(x))-\Psi_f(x,\gamma(x))h(x)$. Continuity of $\psi$ follows from continuity of the map $f \mapsto \Psi_f$ on $V$.
			Since any subshift is expansive the measure-theoretic sofic entropy is an upper semi-continuous function of the measure (as in \cite[Theorem $2.1$]{chungZhang205weakExpansiveness}). In particular for any $f\in C(X)$ there exists an equilibrium measure on $X$ (with respect to $\Sigma$). Fix $f_1\in A$, by~\Cref{thm:LR_etale_local}, it follows that any equilibrium measure $\mu_1$ at $f_1$ is \'etale Gibbs with respect to $\Sigma$. By~\Cref{prop:tangent_functionals_eq_measures} any  equilibrium measure  at $f_1$ is a tangent functional $\mu_1$ for the topological sofic pressure function $\Pi\colon C(X) \to [-\infty,+\infty]$. By~\Cref{lem:etale_gibbs_reformulation} this implies that an equilibrium measure $\mu_1$ at $f_1$ satisfies   \[ \mu_1(\psi(f_1)) = \int_X h(\gamma(x))-\Psi_{f_1}(x,\gamma(x))h(x) \dd \mu_1 \geq 0.  \]

			Applying~\Cref{prop:dense_tangent}, it follows that for every $f_0 \in V$ and every tangent functional $\mu$ for $\Pi$ at $f_0$ we have \[ \mu(\psi(f_0)) = \int_X h(\gamma(x))-\Psi_{f_0}(x,\gamma(x))h(x) \dd \mu \geq 0.  \]
			
			It follows, again by~\Cref{lem:etale_gibbs_reformulation}, that every equilibrium measure $\mu$ at $f_0 \in V$ is \'etale Gibbs for $f_0$ with respect to $\Sigma$.
		\end{proof}
		
		\subsection{Absolutely summable interactions}\label{subsec:ASinteractions}
		
		\begin{definition}
			An \define{interaction} on a space of configurations $X$ is a real valued function on the $X$-admissible patterns $P(X)$. Given an interaction $\Phi\colon P(X) \to \RR$ and $F \Subset \Gamma$ we denote by $\Phi_F$ the restriction of $\Phi$ to $P_F(X)$. 
			An interaction $\Phi$ on $X$ a shift space $X$ is \define{translation-invariant} if $\Phi(p)= \Phi(g(p))$ for every $g \in \Gamma$ and $p \in P(X)$.
			An interaction $\Phi$  is called \define{absolutely-summable} (also called \define{norm-summable}) if 
			\[ \| \Phi\| \isdef \sum_{ 1_\Gamma \in F \Subset \Gamma} \| \Phi_F\|_\infty < +\infty ,\]
			where $\| \Phi_F\|_\infty = \max_{p \in P_F(X)} |\Phi(p)|$ is the uniform norm on the (finite-dimensional) space $\mathbb{R}^{P_F(X)}$. 
		\end{definition}
		
		A translation-invariant interaction $\Phi$ is of \define{finite-range} if there exists $F_0 \Subset \Gamma$ such 
		that $\Phi_F$ is identically zero whenever $F$ is not contained in $gF_0$ for some $g \in \Gamma$. When $\Gamma$ is a finitely generated group, this is equivalent to the statement that $\Phi$ is supported on patterns whose supports have bounded diameter.
		
		The space of absolutely-summable interactions is a Banach space with respect to the above norm~\cite[Proposition $2.19$]{barbieri2021Gibbsianrep}. The subspace of finite-range interactions is dense.

		We now discuss a procedure to obtain a continuous map $f_\Phi\colon X \to \RR$ from a absolutely-summable interaction $\Phi$, following Ruelle~\cite{Rue04} (in the context of $\ZZ^d$-subshifts). 
		Let $\mathcal{P}_{c}(\Gamma)$ denote the space  of finite non-empty subsets of $\Gamma$.
		Let  $\widetilde{\mathcal{P}_c}(\Gamma)\subset \mathcal{P}_{c}(\Gamma)$ be a subset containing exactly one  representative in each $\Gamma$-orbit.
		Namely, for every $F \in \mathcal{P}_{c}(\Gamma)$ the set $\{g F: g \in \Gamma\}$ contains exactly one element of $\widetilde{\mathcal{P}_c}(\Gamma)$. Let $\operatorname{Fix}_{\Gamma}(F)$ be the set of $g \in \Gamma$ such that $gF = F$.
		Define $f_{\Phi}\colon X \to \RR$ as \[ f_{\Phi}(x) \isdef \sum_{ F \in \widetilde{\mathcal{P}_c}(\Gamma)} \frac{1}{|\operatorname{Fix}_{\Gamma}(F)|}\Phi(x|_{F}).\]
		
		Let us remark that in the case where $\Gamma$ has no torsion (as in the case of $\ZZ^d$ in the work of Ruelle), then $\operatorname{Fix}_{\Gamma}(F)$ is trivial and can be removed from the equation above.

		An alternative, and perhaps more natural, way of defining the potential is to just average over all translates of a finite set which contain the identity, namely, we can define $h_\Phi\colon X \to \mathbb{R}$ by
		\[ h_\Phi(x) \isdef \sum_{1_\Gamma \in F \Subset \Gamma} \frac{1}{|F|}\Phi(x|_F).\]
		
		Let us notice that these two definitions are in general different, and it is not even clear that they define the same class of functions. However, they do define the same $\cT(X)$-cocycle whenever it is well-defined.  Indeed,
		
		\begin{align}
			\Psi_{f_{\Phi}}(x,y) & = \sum_{g \in \Gamma}\left( f_\Phi(gy)- f_\Phi(gx)     \right) \\
			& = \sum_{g \in \Gamma} \sum_{F \in \widetilde{\mathcal{P}_c}(\Gamma)} \frac{1}{|\operatorname{Fix}_{\Gamma}(F)|}\left(\Phi(y|_{g^{-1}F})-\Phi(x|_{g^{-1}F})\right) \\
			& = \sum_{F \Subset \Gamma} \left( \Phi(y|_F)-\Phi(x|_F)\right)\\
			& = \sum_{g \in \Gamma} \sum_{g \in F \Subset \Gamma} \frac{1}{|F|}\left(\Phi(y|_{F})-\Phi(x|_{F})\right) \\
			& = \sum_{g \in \Gamma} \sum_{1_\Gamma \in F \Subset \Gamma} \frac{1}{|F|}\left(\Phi(y|_{g^{-1}F})-\Phi(x|_{g^{-1}F})\right) \\
			& = \sum_{g \in \Gamma}\left( h_\Phi(gy)- h_\Phi(gx)     \right) = \Psi_{h_{\Phi}}(x,y)
		\end{align}
		
		Consequently, any Gibbs measure for $f_{\Phi}$ is a Gibbs measure for $h_{\Phi}$ and vice versa. In what follows we will use Ruelle's definition, $f_{\Phi}$, because it has the advantage that all locally constant maps can be obtained from finite-range interactions.

		Given any interaction $\Phi$ for which $f_\Phi$ can be defined, we will say that a measure on $X$ is (\'etale) Gibbs with respect to $\Phi$ if it is (\'etale) Gibbs with respect to $f_\Phi$. This definition is consistent with the original definition of Lanford and Ruelle~\cite{LanfordRuelle1969}.
		
		\begin{remark}
			Any finite-range interaction $\Phi$ induces a locally constant function $f_{\Phi}\colon X \to \mathbb{R}$. Therefore~\Cref{thm:LR_etale_local} holds for finite-range interactions.
		\end{remark}
		
		The potential $f_\Phi$ is continuous. Furthermore, the map $\Phi \mapsto f_\Phi$ is a bounded linear map from the Banach space of absolutely-summable interactions to the space of continuous functions with the uniform norm. 
		
		To simplify the notation, let us denote the $\cT(X)$-cocycle by $\Psi_\Phi$ instead of $\Psi_{f_\Phi}$ and notice that for every $(x,y)\in \cT(X)$ we have \[  \Psi_\Phi(x,y) = \sum_{F \Subset \Gamma} \left( \Phi(y|_F)-\Phi(x|_F)\right).\]
		Whenever $\Phi$ is absolutely-summable and translation-invariant, one can show that for any $(x,y)\in \cT_F(X)$ we have that $|\Psi_{\Phi}(x,y)| \leq 2|F| \| \Phi \|$ (see Proposition 3.1 of~\cite{barbieri2021Gibbsianrep}) and therefore $\Psi_{\Phi}$ is a well-defined $\Gamma$-invariant $\cT(X)$-cocycle. A direct computation shows that $\Psi_{\Phi}$ is continuous.
		
		Let
		\[ \texttt{NS}(X) \isdef \left\{ f_\Phi : \Phi \mbox{ is an absolutely-summable, translation-invariant interaction on } X\right\}.\]
		
		Let us endow $\texttt{NS}(X)$ with the norm given by 	\[ \|f \|_{\texttt{NS}} \isdef \inf\{ \| \Phi\| :~ f= f_\Phi \}.\]
		
		It is straightforward to verify that $\norm{\cdot}_{\texttt{NS}}$ is a semi-norm. The fact that it is a norm follows from the fact that $\norm{f_{\Phi}}_{\infty} \leq \norm{f_{\Phi}}_{\texttt{NS}}$. which is direct from the definition of $f_{\Phi}$.
		
		First, let us show that $f_{\Phi} \mapsto \Psi_\Phi$ is a continuous linear map from $\texttt{NS}(X)$ to the topological vector space of continuous, translation-invariant $\mathcal{T}^0(X)$-cocycles. Let $f \in \texttt{NS}(X)$ and choose $\Phi$ such that $f=f_{\Phi}$. Let also $F\Subset \Gamma$. By Proposition 3.1 of~\cite{barbieri2021Gibbsianrep} we have that for any $(x,y)\in \cT_F(X)$ we have \[ |\Psi_{f}(x,y)| = |\Psi_{\Phi}(x,y)| \leq 2|F|\norm{\Phi}. \]
		In particular, the bound is still valid for $(x,y)\in \cT^0_F(X)$. Taking infimum over all $\Phi$ such that $f=f_{\Phi}$ we get that \[ |\Psi_{f}(x,y)| \leq 2|F|\norm{f}_{\texttt{NS}} \mbox{ for every } (x,y)\in \cT^0_F(X).  \] Therefore the map $f_{\Phi} \mapsto \Psi_\Phi$ is continuous.

		\begin{proposition}\label{prop:NS_isLRgood}
			Any $f \in \texttt{NS}(X)$ is a $\rho_{\mathcal{T}^0(X)}$-limit of locally constant functions.
		\end{proposition}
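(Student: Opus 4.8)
The plan is to approximate $f$ through finite-range interactions, exploiting the two facts already recorded: that finite-range interactions are dense in the Banach space of absolutely-summable, translation-invariant interactions, and that every finite-range interaction yields a locally constant potential. Concretely, I would write $f = f_\Phi$ for an absolutely-summable, translation-invariant interaction $\Phi$, and choose a sequence $(\Phi_n)_{n \in \NN}$ of finite-range translation-invariant interactions with $\|\Phi_n - \Phi\| \to 0$. By the remark preceding this proposition each $f_{\Phi_n}$ is locally constant, so it suffices to verify that $\rho_{\mathcal{T}^0(X)}(f_{\Phi_n}, f_\Phi) \to 0$; by the definition of this metric the task splits into controlling separately the uniform distance $\|f_{\Phi_n} - f_\Phi\|_\infty$ and the cocycle distance $d_{\mathcal{T}^0(X)}(\Psi_{f_{\Phi_n}}, \Psi_{f_\Phi})$.

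The uniform estimate is immediate: since $\Phi \mapsto f_\Phi$ is a bounded linear map into $(C(X), \|\cdot\|_\infty)$ and $f_{\Phi_n} - f_\Phi = f_{\Phi_n - \Phi}$ by linearity, we get $\|f_{\Phi_n} - f_\Phi\|_\infty \to 0$ as $\|\Phi_n - \Phi\| \to 0$.

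For the cocycle distance I would invoke the local bound established just above the statement: for every $F \Subset \Gamma$ and every $(x,y) \in \cT_F(X)$ one has $|\Psi_\Phi(x,y)| \leq 2|F|\,\|\Phi\|$. Applying this to the interaction $\Phi_n - \Phi$, using linearity of $\Phi \mapsto \Psi_\Phi$ together with the inclusion $\cT^0_F(X) \subset \cT_F(X)$, gives
\[
\sup_{(x,y) \in \cT^0_F(X)} \left| \Psi_{f_{\Phi_n}}(x,y) - \Psi_{f_\Phi}(x,y) \right| \leq 2|F|\,\|\Phi_n - \Phi\|
\]
for every $F \Subset \Gamma$. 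Plugging this into the explicit series defining $d_{\mathcal{T}^0(X)}$, the $k$-th summand is at most $2^{-k}\min\{1, 2|F_k|\,\|\Phi_n - \Phi\|\}$. Given $\varepsilon > 0$, I would truncate the series past an index $K$ so that the tail contributes less than $\varepsilon/2$; for the finitely many remaining indices $k \leq K$ the quantity $2|F_k|\,\|\Phi_n - \Phi\|$ tends to $0$ uniformly as $n \to \infty$, since $\max_{k \leq K}|F_k|$ is finite. Hence $d_{\mathcal{T}^0(X)}(\Psi_{f_{\Phi_n}}, \Psi_{f_\Phi}) \to 0$, and combined with the uniform estimate this yields $\rho_{\mathcal{T}^0(X)}(f_{\Phi_n}, f_\Phi) \to 0$.

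There is no serious obstacle in this argument: the entire content has been front-loaded into the local bound $|\Psi_\Phi(x,y)| \leq 2|F|\,\|\Phi\|$ on $\cT_F(X)$, which is exactly what makes the map $\Phi \mapsto \Psi_\Phi$ continuous into the inductive-limit (uniform-on-compacts) topology. The only point requiring a moment's care is that convergence in the metric $d_{\mathcal{T}^0(X)}$ is convergence uniformly on each compact piece $\cT^0_F(X)$ but not simultaneously across all of them; this is handled precisely by the standard $2^{-k}$-truncation of the defining series described above.
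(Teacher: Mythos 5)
Your proof is correct and follows essentially the same route as the paper: the paper constructs the finite-range approximants explicitly by truncating $\Phi$ to patterns supported on the first $n$ orbit-representatives (which is precisely how the density of finite-range interactions that you invoke is established), and then applies the same bound $|\Psi_{\Phi}(x,y)| \leq 2|F|\,\|\Phi\|$ on $\cT_F(X)$ together with the same $2^{-k}$-weighted series estimate for $d_{\mathcal{T}^0(X)}$ and the inequality $\|f-f_n\|_\infty \le \|f-f_n\|_{\texttt{NS}}$. No gaps.
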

		\begin{proof}
			Let $\Phi$ be a translation-invariant,  absolutely summable interaction with $f = f_{\Phi}$.  Let $(F_n)_{n \in \NN}$ be an enumeration of all finite subsets of $\Gamma$ modulo translation. For each $n \geq 1$ consider the interaction $\Phi^{(n)}$ which coincides with $\Phi$ on all patterns whose supports is a translation of $F_k$ for $k \leq n$, and is $0$ everywhere else. It follows that $\Phi^{(n)}$ is a finite-range, translation-invariant interaction and that $f_n := f_{\Phi^{(n)}}$ is  a locally constant function. To complete the proof, it suffices to show that $\lim_{n \to \infty}\rho_{\mathcal{T}^0(X)}(f_n,f)=0$.
			Indeed, as $\Phi$ is absolutely-summable, we have that \[\lim_{n \to \infty}\norm{\Phi-\Phi^{(n)}} = \sum_{k=n+1}^\infty\norm{\Phi_{F_k}}_\infty =0.\]
			From this it follows that 
			\[d_{\mathcal{T}^0(X)}(\Psi_{f_n},\Psi_{f}) \le \sum_{j=1}^\infty \frac{1}{2^j}\min\left\{1,2|F_j| \norm{\Phi-\Phi^{(n)}}\right\} \to 0 \mbox{ as }n \to \infty.\]
			Also, $\norm{f- f_n}_\infty \le \norm{f- f_n}_{\texttt{NS}} \to 0$ as $n \to \infty$.
			This shows that  $\lim_{n \to \infty}\rho_{\mathcal{T}^0(X)}(f_n,f)=0$.\end{proof}
		
		\begin{theorem}\label{thm:LR_norm_summable_interaction_bis}\textbf{(\Cref{thm:LR_norm_summable_interaction})}
			Let $\Sigma$ be a sofic approximation sequence for $\Gamma$, $X$ be a subshift that satisfies the topological Markov property such that $h_{\Sigma}(\Gamma \curvearrowright X)\geq 0$, $\Phi$ an absolutely-summable interaction on $X$ and $\mu$ an equilibrium measure on $X$ for $\Phi$ with respect to $\Sigma$. Then $\mu$ is Gibbs with respect to $\Phi$.
		\end{theorem}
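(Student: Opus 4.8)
The plan is to obtain this theorem as a direct consequence of the more general \Cref{thm:LR_etale_good_bis}, combined with the characterization of the topological Markov property supplied by \Cref{prop:GibbsTMP}. The key points are that the potential $f_\Phi$ attached to an absolutely-summable (translation-invariant) interaction satisfies the regularity hypothesis of \Cref{thm:LR_etale_good_bis}, and that under the TMP the notions of Gibbs and \'etale Gibbs coincide, so that the conclusion of \Cref{thm:LR_etale_good_bis} can be upgraded from ``\'etale Gibbs'' to ``Gibbs''.

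First I would record the basic properties of $f_\Phi$. Since $\Phi$ is absolutely-summable and translation-invariant, the potential $f_\Phi$ lies in $\texttt{NS}(X)$ by definition, and its cocycle $\Psi_\Phi = \Psi_{f_\Phi}$ is a well-defined, continuous, $\Gamma$-invariant $\cT(X)$-cocycle. This is exactly the content of the bound $|\Psi_\Phi(x,y)| \leq 2|F|\,\|\Phi\|$ on $\cT_F(X)$ established in \Cref{subsec:ASinteractions}. In particular, the series defining $\Psi_{f_\Phi}(x,y)$ converges absolutely for every $(x,y) \in \cT(X)$, so that both the notion of being Gibbs with respect to $f_\Phi$ (via $\cT(X)$) and the notion of being \'etale Gibbs with respect to $f_\Phi$ (via $\cT^0(X)$) are meaningful, and a measure is Gibbs with respect to $\Phi$ precisely when it is Gibbs with respect to $f_\Phi$.

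Next I would invoke \Cref{prop:NS_isLRgood}, which asserts that every element of $\texttt{NS}(X)$---in particular $f_\Phi$---is a $\rho_{\mathcal{T}^0(X)}$-limit of locally constant functions. This is precisely the regularity hypothesis demanded by \Cref{thm:LR_etale_good_bis}. Since $h_\Sigma(\Gamma \curvearrowright X) \geq 0$ by assumption, \Cref{thm:LR_etale_good_bis} then applies and yields that any equilibrium measure $\mu$ for $f_\Phi$---equivalently, for $\Phi$---with respect to $\Sigma$ is \'etale Gibbs with respect to $f_\Phi$.

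Finally I would use the hypothesis that $X$ satisfies the topological Markov property. By \Cref{prop:GibbsTMP} this is equivalent to the equality $\cT^0(X) = \cT(X)$, and hence a Borel probability measure is \'etale Gibbs with respect to $f_\Phi$ if and only if it is Gibbs with respect to $f_\Phi$. Combining this equivalence with the previous step shows that $\mu$ is Gibbs with respect to $f_\Phi$, that is, Gibbs with respect to $\Phi$, as claimed. I do not expect any genuine obstacle at this stage: the substantive work has already been carried out in \Cref{thm:LR_etale_local} (the locally constant case), in the convex-analytic extension resting on \Cref{prop:dense_tangent} that feeds into \Cref{thm:LR_etale_good_bis}, and in the density statement \Cref{prop:NS_isLRgood}. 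The only thing to keep straight is the logical role of the TMP hypothesis, which is used exactly to pass from the universally valid conclusion ``\'etale Gibbs'' to the stronger conclusion ``Gibbs''.
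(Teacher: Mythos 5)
Your proposal is correct and follows exactly the paper's own argument: it combines \Cref{prop:NS_isLRgood} (to verify that $f_\Phi$ is a $\rho_{\mathcal{T}^0(X)}$-limit of locally constant functions), \Cref{thm:LR_etale_good_bis} (to conclude that the equilibrium measure is \'etale Gibbs), and \Cref{prop:GibbsTMP} (to upgrade ``\'etale Gibbs'' to ``Gibbs'' under the TMP). The additional remarks you make about the well-definedness and continuity of $\Psi_\Phi$ are consistent with the preparatory material in \Cref{subsec:ASinteractions} and do not change the structure of the argument.
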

		
		\begin{proof}
			The proof follows from putting together~\Cref{thm:LR_etale_good},~\Cref{prop:NS_isLRgood} and~\Cref{prop:GibbsTMP}. Indeed, by~\Cref{prop:NS_isLRgood} we have that any map induced by a translation-invariant, absolutely-summable interaction is a $\rho_{\mathcal{T}^0(X)}$-limit of locally constant functions, and thus from~\Cref{thm:LR_etale_good} it follows that every equilibrium measure is étale Gibbs. Finally, as the space satisfies the topological Markov property, \Cref{prop:GibbsTMP} tells us that $\cT(X)=\cT^0(X)$ and thus we may replace ``étale Gibbs'' by ``Gibbs''.
		\end{proof}
		
		\subsection{Functions with $\mathbb{F}$-summable variation}\label{subsec:FSvariation}
		
		Some authors rather than using a space of interactions, directly use a space of continuous functions $f\colon X \to \RR$ to model ``the potential energy'', for instance~\cite{Meyerovitch_2013,Kellerbook}. In this setting, we introduce the notion of functions with summable variation with respect to a \define{filtration} of $\Gamma$, that is, an increasing sequence of finite subsets of $\Gamma$ which cover it. This notion of convergence generalizes the concept of $d$-summable variation of~\cite{Meyerovitch2017}, the notion of ``regular local energy functions'' of~\cite{Kellerbook} and the notion of ``shell-regular potentials'' of~\cite{BorMac_2020}. We show that every function with this property satisfies the equivalent statement of the Lanford--Ruelle theorem.

		We now define a class of potentials which generalizes the space of functions having ``$d$-summable variation'' in the case $\Gamma = \ZZ^d$ as in \cite{Meyerovitch_2013} and show that they are $\rho_{\mathcal{T}^0(X)}$-limits of locally constant functions, thus recovering \cite[Theorem $3.1$]{Meyerovitch_2013} and generalizing it the case where the acting group is an arbitrary sofic group.
		
		\begin{definition}
			We call a sequence $\mathbb{F}= (F_n)_{n=1}^\infty$ of finite subsets of $\Gamma$ a \define{filtration} of $\Gamma$ if $F_n \subset F_{n+1}$ for all $n$ and $\Gamma = \bigcup_{n=1}^\infty F_n$.
			Given $f\colon X \to \mathbb{R}$ and $S \Subset \Gamma$ denote
			\[\Var_{S}(f) \isdef \sup\left\{ |f(x)-f(y)|:~ x,y \in X \mbox{ and } x|_{S} = y|_{S} \right\},\]
			
			Given a filtration $\mathbb{F}=(F_n)_{n=1}^\infty$ of $\Gamma$ and $S \Subset \Gamma$, let 
			\[\norm{f}_{\texttt{SV}(\mathbb{F}),S} \isdef  \sum_{n=1}^\infty |F_{n+1}S\setminus F_nS| \Var_{F_{n}}(f). \]
			
			A function $f\colon X \to \mathbb{R}$ has \define{$\mathbb{F}$-summable variation} if $\norm{f}_{\texttt{SV}(\mathbb{F}),S} < \infty$ for all $S \Subset \Gamma$.
			We denote the space of functions on $X$ with $\mathbb{F}$-summable variation by $\texttt{SV}_{\mathbb{F}}(X)$.
		\end{definition} 
		
		One may verify that any function with $\mathbb{F}$-summable variation is continuous, and that any locally constant function has $\mathbb{F}$-summable variation with respect to any filtration of $\Gamma$. We endow the vector space $\mathtt{SV}_{\mathbb{F}}(X)$ with the topology induced by the countable collection of semi-norms $(\norm{\cdot}_{\texttt{SV}(\mathbb{F}),S})_{S \Subset \Gamma}$ together with the uniform norm $\|\cdot\|_\infty$. In general, $\mathtt{SV}_{\mathbb{F}}(X)$ is not a Banach space, but it may be verified that it is always a Fr\'echet space (we shall not make use of this property).
		
		Particularly, when $\Gamma= \ZZ^d$ and $\mathbb{F}= (F_n)_{n=1}^\infty$ is the filtration of $\ZZ^d$ given by $F_n= \{-n,\ldots,n\}^d$, then
		$\norm{f}_{\mathtt{SV}(\mathbb{F}),S} < \infty$ for all $S \Subset \ZZ^d$ if and only if
		$\sum_{n=1}^\infty n^{d-1} \Var_{F_n}(f) < +\infty$, so in that case $\mathtt{SV}_{\mathbb{F}}(X)$ is exactly the space of function with $d$-summable variation as defined in~\cite{Meyerovitch_2013} or the space of ``regular local energy functions'' as defined in~\cite[Section 5]{Kellerbook}. More generally, if $\Gamma$ is a finitely generated group with ``bounded sphere ratios'' and $\mathbb{F}$ is the filtration corresponding to balls with respect to some finite symmetric generating set, then $\mathtt{SV}_{\mathbb{F}}(X)$ is exactly the space of ``shell-regular potentials'' as defined in~\cite[Section 5]{BorMac_2020}.

		For the rest of this section let $\mathbb{F} = (F_n)_{n=1}^\infty$ be an arbitrary filtration on the countable group $\Gamma$. We shall prove that any  $f \in \mathtt{SV}_{\mathbb{F}}(X)$ is a $\rho_{\mathcal{T}^0(X)}$-limit of locally constant functions. It it is clear that convergence in $\mathtt{SV}_{\mathbb{F}}(X)$ implies $\norm{\cdot}_\infty$-convergence, so we only need to verify that locally constant functions are dense, and that the map $f \mapsto \Psi_f$ from $\mathtt{SV}_{\mathbb{F}}(X)$ to the space of continuous  $\cT^0(X)$-cocycles is continuous. Let us mention that these two proofs extend almost verbatim from the well known ``classical case'' $\Gamma= \ZZ^d$, and $\mathbb{F} = (\{-n,\ldots,n\})^d)_{n=1}^\infty$.
		
		\begin{lemma}\label{lem:SVF_local_dense}
			The set of locally constant functions is dense in $\mathtt{SV}_{\mathbb{F}}(X)$.
		\end{lemma}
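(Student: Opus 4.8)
The plan is to approximate a given $f \in \mathtt{SV}_{\mathbb{F}}(X)$ by the functions obtained by forcing $f$ to depend only on the coordinates inside $F_N$. Concretely, for each $N$ I would fix, for every admissible pattern $p \in P_{F_N}(X)$, a point $\hat{x}_p \in X$ with $\hat{x}_p|_{F_N} = p$ (a finite choice, since $P_{F_N}(X)$ is finite), and set $f_N(x) \isdef f(\hat{x}_{x|_{F_N}})$. By construction $f_N$ is $F_N$-locally constant, so to prove density in $\mathtt{SV}_{\mathbb{F}}(X)$ it suffices to show $\norm{f - f_N}_\infty \to 0$ and $\norm{f - f_N}_{\texttt{SV}(\mathbb{F}),S} \to 0$ for every $S \Subset \Gamma$. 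The case of finite $\Gamma$ is trivial (every function on $X$ is then locally constant), so I assume $\Gamma$ is infinite.

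First, since $x$ and $\hat{x}_{x|_{F_N}}$ agree on $F_N$, one immediately gets $\norm{f - f_N}_\infty \le \Var_{F_N}(f)$; combined with the general bound $\Var_S(g) \le 2\norm{g}_\infty$ this yields $\Var_{F_n}(f - f_N) \le 2\Var_{F_N}(f)$ for \emph{every} $n$. On the other hand, for $n \ge N$ any two points agreeing on $F_n$ also agree on $F_N$, so $f_N$ takes equal values on them and $\Var_{F_n}(f - f_N) \le \Var_{F_n}(f)$. Splitting the defining series at $N$ and using these two bounds, I would estimate
\[\norm{f - f_N}_{\texttt{SV}(\mathbb{F}),S} \le 2\Var_{F_N}(f)\sum_{n=1}^{N-1}|F_{n+1}S \setminus F_n S| + \sum_{n=N}^\infty |F_{n+1}S \setminus F_n S|\,\Var_{F_n}(f).\]
The second summand is the tail of the convergent series $\norm{f}_{\texttt{SV}(\mathbb{F}),S}$ and vanishes as $N \to \infty$. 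Telescoping gives $\sum_{n=1}^{N-1}|F_{n+1}S \setminus F_n S| = |F_N S| - |F_1 S|$, so the first summand is controlled by $2\Var_{F_N}(f)\,|F_N S|$.

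Hence the crux is to prove $\Var_{F_N}(f)\,|F_N S| \to 0$. Writing $a_n = |F_{n+1}S \setminus F_n S| \ge 0$ and $b_n = \Var_{F_n}(f)$, the sequence $(b_n)$ is non-increasing (enlarging the support can only shrink the variation) and $\sum_n a_n b_n = \norm{f}_{\texttt{SV}(\mathbb{F}),S} < \infty$. Since $F_n S \nearrow \Gamma$ with $\Gamma$ infinite, there are infinitely many $n$ with $a_n \ge 1$, so summability forces $b_n \to 0$ along that subsequence and hence $b_N \to 0$ by monotonicity. The desired convergence is then a Kronecker-type argument: given $\varepsilon > 0$ choose $M$ with $\sum_{n > M} a_n b_n < \varepsilon$; for $N > M$, monotonicity gives $b_N \sum_{n=M+1}^{N-1} a_n \le \sum_{n=M+1}^{N-1} a_n b_n < \varepsilon$, so $b_N \sum_{n=1}^{N-1} a_n < \varepsilon + b_N \sum_{n=1}^{M} a_n$, and the final term tends to $0$ because $b_N \to 0$ and $M$ is fixed. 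Thus $b_N \sum_{n<N} a_n \to 0$, and together with $b_N |F_1 S| \to 0$ this yields $\Var_{F_N}(f)\,|F_N S| \to 0$, completing the proof.

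The only non-routine point, and thus the main obstacle, is precisely this Kronecker-type estimate: it is where both the summability hypothesis and the monotonicity of the variations $\Var_{F_n}(f)$ are used essentially, and where the infiniteness of $\Gamma$ (via $|F_N S| \to \infty$) enters. Everything else is bookkeeping with the two elementary variation bounds derived from the choice-function construction of $f_N$.
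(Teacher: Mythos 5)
Your proof is correct and follows essentially the same route as the paper: approximate $f$ by $F_N$-locally constant functions (the paper uses $f_N(x)=\sup\{f(y):\, y|_{F_N}=x|_{F_N}\}$ instead of a choice function, which makes no difference), bound $\Var_{F_n}(f-f_N)$ by $2\Var_{F_N}(f)$ for $n<N$ and by $\Var_{F_n}(f)$ for $n\ge N$, and split the defining series at $N$. The only real difference is that you spell out the Kronecker-type estimate $\Var_{F_N}(f)\,|F_NS|\to 0$, which the paper compresses into ``this follows directly from $\Var_{F_m}(f-f_n)\le 2\Var_{F_n}(f)$''; your argument is a correct completion of that step.
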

		
		\begin{proof}
			Let $f \in \mathtt{SV}_{\mathbb{F}}(X)$. For every $n \geq 1$ consider the locally constant function
			$f_n\colon X \to \mathbb{R}$ given by
			\[f_n(x) \isdef \sup\{ f(y):~ y|_{F_n} = x|_{F_n}\}.\]
			We claim that $(f_n)_{n =1}^{\infty}$ converges to $f$ in $\mathtt{SV}_{\mathbb{F}}(X)$. Indeed, as $f$ is continuous it is clear that $\norm{f-f_n}_{\infty}$ converges to $0$. Therefore it suffices to check that $\norm{f-f_n}_{\mathtt{SV}(\mathbb{F}),S}$ converges to $0$ for every $S \Subset \Gamma$. This follows directly from the fact that $\Var_{F_m}(f-f_n)\leq 2\Var_{F_n}(f)$ for every $m \leq n$.
		\end{proof}
		
		\begin{lemma}\label{lem:SV_cocycle_cont}
			
			For every function $f\colon X \to \mathbb{R}$ with $\mathbb{F}$-summable variation, $S \Subset \Gamma$ and $(x,y) \in \cT_S(X)$, the series defining $\Psi(x,y)$
			in~\Cref{defn:Psi_f} is absolutely convergent. Hence, $\Psi_f$ is a well-defined $\Gamma$-invariant $\cT(X)$-cocycle and is furthermore continuous. Moreover, the map $f \mapsto \Psi_f$ from $\mathtt{SV}_{\mathbb{F}}(X)$ to the space of continuous  $\cT^0(X)$-cocycles is continuous. Consequently, convergence in $\mathtt{SV}_{\mathbb{F}}(X)$ implies $\rho_{\mathcal{T}^0(X)}$-convergence.
		\end{lemma}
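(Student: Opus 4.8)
The plan is to reduce the whole statement to a single uniform estimate on $|\Psi_f(x,y)|$ for pairs $(x,y)\in\cT_S(X)$. First I would observe that if $(x,y)\in\cT_S(X)$, then $x$ and $y$ agree off $S$, so $gx$ and $gy$ agree off $gS$; in particular $(gx)|_{F_n}=(gy)|_{F_n}$ whenever $F_n\cap gS=\varnothing$, a condition which is exactly $g\notin F_nS^{-1}$. Grouping the summands of $\Psi_f(x,y)=\sum_{g\in\Gamma}(f(gy)-f(gx))$ according to the largest $F_n$ on which $gx$ and $gy$ are forced to agree, and bounding each such summand by $\Var_{F_n}(f)$, I would obtain
\[
|\Psi_f(x,y)|\le\sum_{g\in\Gamma}|f(gy)-f(gx)|\le 2\,|F_1S^{-1}|\,\|f\|_\infty+\sum_{n=1}^\infty|F_{n+1}S^{-1}\setminus F_nS^{-1}|\,\Var_{F_n}(f),
\]
where the first term accounts for the finitely many $g\in F_1S^{-1}$ whose ``centre'' meets $gS$ (bounded crudely by the total oscillation $\Var_\varnothing(f)\le 2\|f\|_\infty$), and the tail is exactly $\|f\|_{\texttt{SV}(\mathbb{F}),S^{-1}}$. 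Since $f$ has $\mathbb{F}$-summable variation we have $\|f\|_{\texttt{SV}(\mathbb{F}),S^{-1}}<\infty$, so the series is absolutely convergent for every $(x,y)\in\cT_S(X)$, hence for every $(x,y)\in\cT(X)$. The cocycle identity and the $\Gamma$-invariance of $\Psi_f$ are then automatic from the remark following \Cref{defn:Psi_f}.

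For continuity of $\Psi_f$, I would fix $S$ and note that the bound on each summand above depends only on $g$ and $S$, never on the particular pair $(x,y)\in\cT_S(X)$; hence the tails $\sum_{g\notin F}|f(gy)-f(gx)|$ are dominated, uniformly over $\cT_S(X)$, by the tail of the convergent series just displayed, and so tend to $0$ as $F\nearrow\Gamma$. The partial sums $\sum_{g\in F}(f(gy)-f(gx))$ are continuous on $\cT_S(X)$, being finite sums of continuous functions, so their uniform limit $\Psi_f$ is continuous on each $\cT_S(X)$. By the criterion recalled earlier in the excerpt — that a function on $\cT^0(X)$ is continuous if and only if $x\mapsto\Psi_f(x,I_{p,q}(x))$ is continuous for every interchangeable pair $(p,q)$ — this yields continuity of $\Psi_f$ on all of $\cT^0(X)$.

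Finally, since $f\mapsto\Psi_f$ is linear, to prove it continuous into the space of continuous $\cT^0(X)$-cocycles metrised by $d_{\mathcal{T}^0(X)}$ of \Cref{def:metric_T_cocycles} it suffices to control, for each fixed $F_n$ from the enumeration used there, the quantity $\sup_{(x,y)\in\cT^0_{F_n}(X)}|\Psi_f(x,y)|$. Applying the main estimate with $S=F_n$ gives
\[
\sup_{(x,y)\in\cT^0_{F_n}(X)}|\Psi_f(x,y)|\le 2\,|F_1F_n^{-1}|\,\|f\|_\infty+\|f\|_{\texttt{SV}(\mathbb{F}),F_n^{-1}}.
\]
Thus if $f_k\to f$ in $\mathtt{SV}_{\mathbb{F}}(X)$ — which, by definition of its topology, means $\|f_k-f\|_\infty\to0$ together with $\|f_k-f\|_{\texttt{SV}(\mathbb{F}),S}\to0$ for every $S\Subset\Gamma$, in particular for $S=F_n^{-1}$ — then $\sup_{(x,y)\in\cT^0_{F_n}(X)}|\Psi_{f_k-f}(x,y)|\to0$ for each $n$, whence $d_{\mathcal{T}^0(X)}(\Psi_{f_k},\Psi_f)\to0$. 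Combined with the trivial $\|\cdot\|_\infty$-convergence, this yields $\rho_{\mathcal{T}^0(X)}(f_k,f)\to0$. The only delicate bookkeeping is the appearance of $S^{-1}$ rather than $S$ in the combinatorial weights; this causes no trouble precisely because $\mathbb{F}$-summable variation, and the topology of $\mathtt{SV}_{\mathbb{F}}(X)$, are required for \emph{all} finite $S$. Isolating the finitely many ``central'' translates into the $\|f\|_\infty$ term is the only other point that needs care, and I expect that to be the main place where one must argue rather than compute.
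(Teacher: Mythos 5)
Your proposal is correct and follows essentially the same route as the paper: the same shell decomposition $\Gamma=\bigcup_{n\ge 0}(F_{n+1}S^{-1}\setminus F_nS^{-1})$, the same bound $|\Psi_f(x,y)|\le 2|F_1S^{-1}|\,\|f\|_\infty+\norm{f}_{\texttt{SV}(\mathbb{F}),S^{-1}}$ on $\cT_S(X)$, and the same use of that bound (via linearity) for continuity of $f\mapsto\Psi_f$ and hence for $\rho_{\mathcal{T}^0(X)}$-convergence. The only cosmetic difference is that you obtain continuity of $\Psi_f$ on each $\cT_S(X)$ by uniform convergence of the partial sums (a Weierstrass-type argument), whereas the paper writes out the equivalent two-point estimate $|\Psi_f(x,y)-\Psi_f(x',y')|\le\varepsilon$ explicitly; both rest on the same uniform tail bound.
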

		\begin{proof}
			Let $f\in \mathtt{SV}_{\mathbb{F}}(X)$, $S \Subset \Gamma$ and $(x,y) \in \cT_{S}(X)$. Let us write $F_0 = \varnothing$. As $\mathbb{F}$ is a filtration we may write $\Gamma$ as the disjoint union \[ \Gamma = \bigcup_{n=0}^{\infty}(F_{n+1}S^{-1} \setminus F_nS^{-1}).   \]
			Therefore we may write
			\[\sum_{ g\in \Gamma} |f(gy)-f(gx)| =\sum_{n=0}^\infty\sum_{g \in F_{n+1}S^{-1} \setminus F_nS^{-1}} |f(gy)-f(gx)|. \]
			Now for every $g \in \Gamma$, we have the trivial bound $|f(gy)-f(gx)| \leq 2\|f\|_\infty$. Moreover, as $(x,y) \in \cT_{S}(X)$ it follows that if $g \in \Gamma \setminus F_nS^{-1}$ then $(gx)|_{F_n} = (gy)|_{F_n}$ and thus $|f(gy)-f(gx)| \leq \Var_{F_n}(f)$, so
			
			\begin{align}
				\sum_{g \in \Gamma} |f(gy)-f(gx)| & \leq 2|F_1S^{-1}|\norm{f}_{\infty} + \sum_{n=1}^{\infty} |F_{n+1}S^{-1} \setminus F_nS^{-1}| \Var_{F_n}(f)\\ & = 2|F_1S^{-1}|\norm{f}_{\infty} + \norm{f}_{\mathtt{SV}(\mathbb{F}),S^{-1}}.
			\end{align}
			We conclude that the series defining $\Psi_f(x,y)$ is indeed absolutely convergent for every $S \Subset \Gamma$ and $(x,y) \in \cT_{S}(X)$.

			Let us now show that $\Psi_f\colon\mathcal{T}(X) \to \mathbb{R}$ is continuous. As explained in~\Cref{sec:etale}, it suffices to show that $\Psi_f|_{\cT_S(X)}$ is continuous for every $S\Subset \Gamma$. More explicitly, we need to show that for every $S \Subset \Gamma$ and $\varepsilon >0$ there exists $K \Subset \Gamma$ such that if $(x,y),(x',y') \in \cT_S(X)$ are such that $x|_K = x'|_K$ and $y|_K = y'|_K$ then $|\Psi_f(x,y)-\Psi_f(x',y')| \leq \varepsilon$.
			
			Fix $S\Subset \Gamma$ and $\varepsilon >0$. Let $N,M \in \NN$ and choose $K\Subset \Gamma$ such that $SF_N^{-1}F_M \Subset K$. Let $(x,y),(x',y') \in \cT_S(X)$ such that $x|_K = x'|_K$ and $y|_K = y'|_K$. Then it follows that for every $g \in F_NS^{-1}$ we have $g(x)|_{F_M} = g(x')|_{F_M}$ and $g(y)|_{F_M} = g(y')|_{F_M}$ and thus $|f(gy)-f(gy')| \leq \Var_{F_M}(f)$ and $|f(x)-f(x')| \leq \Var_{F_M}(f)$. Since $(x,x') \in \mathcal{T}_S(X)$ and $(y,y') \in \mathcal{T}_S(X)$, for every $n \in \NN$ and $g \in \Gamma \setminus F_{n}S^{-1}$ we have $|f(gy)-f(gx)| \leq \Var_{F_n}(f)$ and $|f(gy')-f(gx')| \leq \Var_{F_N}(f)$.
			
			Putting all the above bounds together yields:
			
			\begin{align}
				|\Psi_f(x,y)-\Psi_f(x',y')| & \leq \sum_{g \in F_NS^{-1}} \left(|f(gy)-f(gy')|+|f(gx)-f(gx')|\right) \\ & \quad\quad + \sum_{n=N}^\infty \sum_{g \in F_{n+1}S^{-1}\setminus F_{n}S^{-1}}\left(|f(gy)-f(gx)|+|f(gy')-f(gx')|\right)\\
				& \leq 2|F_NS^{-1}|\Var_{F_M}(f) + 2\sum_{n=N}^\infty |F_{n+1}S^{-1}\setminus F_{n}S^{-1}| \Var_{F_n}(f).
			\end{align}
			
			Since $f \in \mathtt{SV}_{\mathbb{F}}(X)$, there exists $N \in \NN$ such that 
			$2\sum_{n=N}^\infty |F_{n+1}S^{-1}\setminus F_{n}S^{-1}| \Var_{F_n}(f) \leq \frac{\varepsilon}{2}$, and there exists $M \in \NN$ such that $2|F_NS^{-1}|\Var_{F_M}(f) \leq \frac{\varepsilon}{2}$. Now for any $K\Subset \Gamma$ satisfying $SF_N^{-1}F_M \subset K$, it follows that if $(x,y),(x',y') \in \cT_S(X)$ are such that $x|_K = x'|_K$ and $y|_K = y'|_K$ then $|\Psi_f(x,y)-\Psi_f(x',y')| \leq \varepsilon$. This proves that $\Psi_f$ is continuous.

			Finally, we show that the map $f \mapsto \Psi_f$ from $\mathtt{SV}_{\mathbb{F}}(X)$ to the space of continuous  $\cT^0(X)$-cocycles is continuous. Since $f \mapsto \Psi_f$ is linear, it suffices to prove continuity at $0$ for every restriction of the domain of the image to $\cT^0_S(X)$. This follows directly from the estimate 
			\[ |\Psi_f(x,y) | \leq 2|F_1S^{-1}| \norm{f}_\infty + \norm{f}_{\mathtt{SV}(\mathbb{F}),S^{-1}} \mbox{ for every } (x,y)\in \cT^0_S(X).\]
		\end{proof}
		
		\begin{theorem}\label{thm:LR_SV}
			Let $\Sigma$ be a sofic approximation sequence for $\Gamma$, $X$ be a subshift with the topological Markov property such that $h_{\Sigma}(\Gamma \curvearrowright X)\geq 0$, $f\colon X \to \mathbb{R}$ a function with $\mathbb{F}$-summable variation with respect to a filtration $\mathbb{F}$ of $\Gamma$ and $\mu$ an equilibrium measure on $X$ for $f$ with respect to $\Sigma$. Then $\mu$ is Gibbs with respect to $f$.
		\end{theorem}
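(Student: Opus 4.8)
The plan is to reduce this statement to \Cref{thm:LR_etale_good} in exactly the same way that \Cref{thm:LR_norm_summable_interaction_bis} was reduced to it, the only new ingredient being that functions of $\mathbb{F}$-summable variation constitute an admissible class of potentials for that theorem. Concretely, I would first verify that every $f \in \mathtt{SV}_{\mathbb{F}}(X)$ is a $\rho_{\mathcal{T}^0(X)}$-limit of locally constant functions, then apply \Cref{thm:LR_etale_good} to conclude that any equilibrium measure $\mu$ is étale Gibbs with respect to $f$, and finally invoke the topological Markov property to upgrade ``étale Gibbs'' to ``Gibbs''.

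The first step is supplied by the two lemmas just established. By \Cref{lem:SVF_local_dense} the locally constant functions are dense in $\mathtt{SV}_{\mathbb{F}}(X)$ with respect to the topology generated by the semi-norms $(\norm{\cdot}_{\texttt{SV}(\mathbb{F}),S})_{S \Subset \Gamma}$ together with $\norm{\cdot}_\infty$, while by \Cref{lem:SV_cocycle_cont} the cocycle map $f \mapsto \Psi_f$ is a well-defined continuous map from $\mathtt{SV}_{\mathbb{F}}(X)$ into the space of continuous $\cT^0(X)$-cocycles, and convergence in $\mathtt{SV}_{\mathbb{F}}(X)$ implies $\rho_{\mathcal{T}^0(X)}$-convergence. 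Combining these, given $f \in \mathtt{SV}_{\mathbb{F}}(X)$ I would take a sequence $(f_n)_{n \geq 1}$ of locally constant functions converging to $f$ in $\mathtt{SV}_{\mathbb{F}}(X)$; then $f_n \to f$ in the $\rho_{\mathcal{T}^0(X)}$-metric, which is exactly what is needed.

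With this in hand, \Cref{thm:LR_etale_good} applies directly: since $h_{\Sigma}(\Gamma \curvearrowright X) \geq 0$ and $f$ is a $\rho_{\mathcal{T}^0(X)}$-limit of locally constant functions, any equilibrium measure $\mu$ for $f$ with respect to $\Sigma$ is étale Gibbs with respect to $f$. Because $X$ satisfies the topological Markov property, \Cref{prop:GibbsTMP} yields $\cT^0(X) = \cT(X)$, so the notions of étale Gibbs and Gibbs coincide, and $\mu$ is Gibbs with respect to $f$, completing the argument.

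I do not expect a genuine obstacle at the level of this theorem, since the essential analytic content has already been absorbed into \Cref{thm:LR_etale_good} and into \Cref{lem:SV_cocycle_cont}. The only point requiring care is the matching of two a priori different topologies, namely confirming that the $\mathtt{SV}_{\mathbb{F}}(X)$-topology is finer than the $\rho_{\mathcal{T}^0(X)}$-metric topology; this is precisely the final assertion of \Cref{lem:SV_cocycle_cont} and so needs only to be cited. Had that continuity not been available, establishing it would have been the hard part, as it rests on the variation estimates for $\Psi_f$ over the relation $\cT^0_S(X)$.
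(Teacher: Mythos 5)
Your proposal is correct and follows essentially the same route as the paper: the paper's proof likewise combines \Cref{lem:SVF_local_dense} and \Cref{lem:SV_cocycle_cont} to show that any $f \in \mathtt{SV}_{\mathbb{F}}(X)$ is a $\rho_{\mathcal{T}^0(X)}$-limit of locally constant functions, and then concludes via \Cref{thm:LR_etale_good} and \Cref{prop:GibbsTMP}. Your remark that the only delicate point is that the $\mathtt{SV}_{\mathbb{F}}(X)$-topology is finer than the $\rho_{\mathcal{T}^0(X)}$-topology matches the paper exactly, since that is precisely the final assertion of \Cref{lem:SV_cocycle_cont}.
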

		
		\begin{proof}
			The combination of~\Cref{lem:SV_cocycle_cont,lem:SVF_local_dense} along with continuity of the uniform norm shows %
			any function with $\mathbb{F}$-summable variation is a $\rho_{\mathcal{T}^0(X)}$-limit of locally constant functions,
			so the result follows from~\Cref{thm:LR_etale_good} and~\Cref{prop:GibbsTMP}.
		\end{proof}
		
		\section{Applications}\label{sec:applications}

		In this brief section we present a few applications of our main theorem. Our first result concerns dynamical systems which admit a unique Gibbs measure with respect to some $f \in C(X)$. There are several results in the literature ensuring uniqueness of Gibbs measures, see for instance~\cite{Dobrushin1968,VanDenBerg1993}.
		
		\begin{theorem}\label{thm:uniquegibbsmeasure}
			Let $\Gamma$ be a sofic group and $X\subset \ag^{\Gamma}$ be a subshift with the topological Markov property which admits a unique Gibbs measure $\mu$ with respect to $f \in C(X)$ which is a $\rho_{\mathcal{T}^0(X)}$-limit of locally constant functions. For any sofic approximation sequence $\Sigma$ of $\Gamma$ such that $h_{\Sigma}(\Gamma \curvearrowright X) \geq 0$, we have that $\mu$ is translation-invariant and furthermore, it is the unique equilibrium measure on $X$ for $f$ with respect to $\Sigma$.
		\end{theorem}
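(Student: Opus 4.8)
The plan is to combine the sofic Lanford--Ruelle theorem (\Cref{thm:LR_etale_good}) with the coincidence of the two Gibbs notions under the topological Markov property (\Cref{prop:GibbsTMP}) and the uniqueness hypothesis, reducing the whole statement to a short comparison argument. The key observation is that an equilibrium measure is, \emph{by definition}, $\Gamma$-invariant, whereas a Gibbs measure a priori need not be; so once I know that every equilibrium measure is Gibbs and that the Gibbs measure is unique, every equilibrium measure must coincide with the given $\mu$, which in turn forces $\mu$ itself to be an invariant equilibrium measure.

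First I would establish that at least one equilibrium measure for $f$ with respect to $\Sigma$ exists. This is really the crux: without it, the desired conclusions about $\mu$ would be vacuous. Since $X$ is a subshift it is expansive, so the measure-theoretic sofic entropy $\mu \mapsto h_\Sigma(\Gamma \curvearrowright X,\mu)$ is upper semi-continuous on $\Prob_{\Gamma}(X)$ (Theorem~$2.1$ of~\cite{chungZhang205weakExpansiveness}). The hypothesis $h_\Sigma(\Gamma \curvearrowright X) \geq 0 > -\infty$ forces $\Prob_{\Gamma}(X) \neq \varnothing$ through the variational principle (\Cref{thm:variational}), since otherwise the supremum defining the topological entropy would be over the empty set and thus $-\infty$. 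As $\mu \mapsto \int_X f\,\dd\mu$ is weak-$*$ continuous and $\Prob_{\Gamma}(X)$ is weak-$*$ compact, the upper semi-continuous functional $\mu \mapsto h_\Sigma(\Gamma \curvearrowright X,\mu) + \int_X f\,\dd\mu$ attains its supremum on $\Prob_{\Gamma}(X)$; this supremum is finite (the entropy of a subshift is bounded by $\log|\ag|$) and equals $P_\Sigma(\Gamma \curvearrowright X,f)$ by the variational principle, so any maximizer is the sought equilibrium measure.

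Next, let $\nu$ be an arbitrary equilibrium measure for $f$ with respect to $\Sigma$. Since $f$ is a $\rho_{\mathcal{T}^0(X)}$-limit of locally constant functions and $h_\Sigma(\Gamma \curvearrowright X) \geq 0$, \Cref{thm:LR_etale_good} shows that $\nu$ is \'etale Gibbs with respect to $f$; and because $X$ satisfies the topological Markov property, \Cref{prop:GibbsTMP} gives $\cT^0(X) = \cT(X)$, so $\nu$ is in fact Gibbs with respect to $f$. The uniqueness hypothesis then yields $\nu = \mu$. Applying this to the equilibrium measure produced in the previous paragraph shows that $\mu$ is itself an equilibrium measure; as equilibrium measures lie in $\Prob_{\Gamma}(X)$ by definition, $\mu$ is translation-invariant, and since every equilibrium measure must equal $\mu$, it is the unique one. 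I expect the only genuine subtlety to be the existence step: one must check that the entropy hypothesis together with expansiveness of subshifts really guarantees an honest maximizer in $\Prob_{\Gamma}(X)$, rather than merely a supremizing sequence whose weak-$*$ limit might either leave the invariant measures or drop to entropy $-\infty$; the remaining steps are then purely formal consequences of the results already established.
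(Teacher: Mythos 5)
Your proof is correct and takes essentially the same route as the paper's: existence of an equilibrium measure from upper semi-continuity of the sofic entropy on the weak-$*$ compact set $\Prob_{\Gamma}(X)$ (\Cref{prop:exp_upper_semi_cont_enropy}), then \Cref{thm:LR_etale_good} combined with \Cref{prop:GibbsTMP} to force every equilibrium measure to coincide with the unique Gibbs measure $\mu$. If anything you are slightly more careful than the paper, which leaves implicit both the nonemptiness of $\Prob_{\Gamma}(X)$ and the passage from \'etale Gibbs to Gibbs via the topological Markov property.
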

		
		\begin{proof}
			Let us fix $\Sigma$ such that $h_{\Sigma}(\Gamma \curvearrowright X) \geq 0$. By~\Cref{prop:exp_upper_semi_cont_enropy} it follows that the entropy function is upper semi-continuous and thus there exists a (translation-invariant) equilibrium measure on $X$ for $f$. \Cref{thm:LR_etale_good} implies every equilibrium measure on $X$ for $f$ is Gibbs with respect to $f$ and thus we conclude that any such measure must coincide with the unique Gibbs measure $\mu$ with respect to $f$.
		\end{proof}
		
		A consequence of~\Cref{thm:uniquegibbsmeasure}, which has already been mentioned in the introduction, is uniqueness of the equilibrium measure, and thus independence of the equilibrium measure on the sofic approximation sequence, for single-site potentials over a full shift: suppose $f\colon \ag^{\Gamma} \to \RR$ is a single-site potential on $\ag^{\Gamma}$, that is, $f(x)=f(y)$ if $x(1_{\Gamma})=y(1_{\Gamma})$ (and thus we may identify $f$ with a function $f\colon \ag \to \RR$). Then there is a unique Gibbs measure $\mu_f$ with respect to $f$ which is Bernoulli and given by \[ \mu_f(\{x \in \ag^{\Gamma} : x(g) = a \}) = \frac{\exp(f(a) )}{\sum_{b \in \ag }\exp(f(b))} \mbox{ for every } g \in \Gamma.   \]
		
		\Cref{thm:uniquegibbsmeasure} yields that $\mu_f$ is indeed the unique equilibrium measure on $\ag^{\Gamma}$ for the single-site potential $f$. As mentioned in the introduction, the question of uniqueness in this setting was asked in~\cite[Question 5.4]{chung_2013} and answered via a direct argument in~\cite[Example 7]{Bowen_2020}. It also received a positive answer via the theory of Rokhlin entropy~\cite[Corollary  3.6]{Seward2019b}.
		
		\Cref{thm:uniquegibbsmeasure} can be combined with more sophisticated criteria for uniqueness of Gibbs measures to obtain uniqueness of equilibrium measures on more complicated systems. For instance, van der Berg's percolation criterion in~\cite[Corollary $1$]{VanDenBerg1993} implies that for any finitely generated sofic group, the Ising model has a unique Gibbs measure at sufficiently high temperature.~\Cref{thm:uniquegibbsmeasure} then implies uniqueness of the equilibrium measure.
		
		A second application is the existence of Gibbs measures for any subshift $X\subset \ag^{\Gamma}$ with the topological Markov property for which there is some sofic approximation sequence with nonnegative sofic topological entropy. This generalizes a result of Alpeev~\cite{Alpeev2016}.
		
		\begin{theorem}\label{thm:existenceGibbsmeasure}
			Let $\Gamma$ be a sofic group and $X \subset \ag^{\Gamma}$ be a subshift with the topological Markov property for which there exists a sofic approximation sequence $\Sigma$ of $\Gamma$ with $h_{\Sigma}(\Gamma\curvearrowright X) \geq 0$. For any $f \in C(X)$ which is a $\rho_{\mathcal{T}^0(X)}$-limit of locally constant functions there exists a Gibbs measure on $X$ with respect to $f$.
		\end{theorem}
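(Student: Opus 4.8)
The plan is to obtain the desired Gibbs measure as an equilibrium measure and then feed it through the machinery already developed, exactly as in the proof of \Cref{thm:uniquegibbsmeasure}. The argument rests on three ingredients: existence of an equilibrium measure (via upper semi-continuity of the sofic entropy and compactness of the space of invariant measures), the main theorem \Cref{thm:LR_etale_good} (every equilibrium measure is \'etale Gibbs), and \Cref{prop:GibbsTMP} (the topological Markov property upgrades \'etale Gibbs to Gibbs).

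First I would check that $\Prob_{\Gamma}(X) \neq \varnothing$. This is forced by the hypothesis $h_{\Sigma}(\Gamma \curvearrowright X) \geq 0$: were there no $\Gamma$-invariant Borel probability measure on $X$, then the argument in the proof of the variational principle (\Cref{thm:variational}) would give $\xi_{\sigma_i}^{-1}(N_{\delta'}(X^{\star})) = \varnothing$ for all large $i$ and hence $h_{\Sigma}(\Gamma \curvearrowright X) = -\infty$, contradicting the assumption.

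Next I would establish that an equilibrium measure for $f$ with respect to $\Sigma$ exists. Since $X$ is a subshift it is expansive, so the measure-theoretic sofic entropy $\mu \mapsto h_{\Sigma}(\Gamma \curvearrowright X,\mu)$ is upper semi-continuous on the weak-$*$ compact space $\Prob_{\Gamma}(X)$; this is \cite[Theorem 2.1]{chungZhang205weakExpansiveness}, which in the present setting follows from \Cref{prop:exp_upper_semi_cont_enropy} together with the expansiveness reduction $P_{\Sigma}(\Gamma \curvearrowright X,\mu,f) = P_{\Sigma}(\varepsilon,\Gamma \curvearrowright X,\mu,f)$ recorded earlier. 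As $\mu \mapsto \int_X f\,\dd\mu$ is weak-$*$ continuous, the pressure functional $\mu \mapsto h_{\Sigma}(\Gamma \curvearrowright X,\mu) + \int_X f\,\dd\mu$ is upper semi-continuous on the nonempty compact set $\Prob_{\Gamma}(X)$, and hence attains its supremum at some $\mu \in \Prob_{\Gamma}(X)$. By the variational principle this maximizer is, by definition, an equilibrium measure for $f$ with respect to $\Sigma$.

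Finally I would apply the main results. Because $h_{\Sigma}(\Gamma \curvearrowright X) \geq 0$ and $f$ is a $\rho_{\mathcal{T}^0(X)}$-limit of locally constant functions, \Cref{thm:LR_etale_good} yields that $\mu$ is \'etale Gibbs with respect to $f$. Since $X$ satisfies the topological Markov property, \Cref{prop:GibbsTMP} gives $\cT^0(X) = \cT(X)$, so the notions of \'etale Gibbs and Gibbs coincide; thus $\mu$ is a Gibbs measure on $X$ with respect to $f$, producing the measure whose existence is asserted. There is no genuine obstacle beyond the existence step, which is the one place demanding care: one must simultaneously invoke expansiveness of subshifts to secure upper semi-continuity and nonnegativity of the entropy to guarantee that $\Prob_{\Gamma}(X)$ is nonempty, so that an equilibrium measure actually exists before the Lanford--Ruelle conclusion can be applied.
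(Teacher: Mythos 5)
Your proposal is correct and follows essentially the same route as the paper: existence of an equilibrium measure via upper semi-continuity of the sofic entropy (from expansiveness, \Cref{prop:exp_upper_semi_cont_enropy}) on the nonempty compact set $\Prob_\Gamma(X)$, then \Cref{thm:LR_etale_good} to get \'etale Gibbs and \Cref{prop:GibbsTMP} to upgrade to Gibbs using the TMP. You spell out two details the paper leaves implicit (nonemptiness of $\Prob_\Gamma(X)$ and the explicit appeal to \Cref{prop:GibbsTMP}), but the argument is the same.
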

		
		\begin{proof}
			Let us fix $\Sigma$ such that $h_{\Sigma}(\Gamma \curvearrowright X) \geq 0$.  By~\Cref{prop:exp_upper_semi_cont_enropy} the entropy function is upper semi-continuous and thus there exists an equilibrium measure $\mu$ on $X$ for $f$ with respect to $\Sigma$. By~\Cref{thm:LR_etale_good} it follows that $\mu$ must be Gibbs with respect to $f$.
		\end{proof}
		
		In what follows we consider an algebraic variant of shift spaces. Let $H$ be a finite group and consider a subshift $X\subset H^{\Gamma}$. We say that $X$ is a \define{group shift} if $X$ forms a group under the operation induced by $H$ operating pointwise on every $g \in \Gamma$. In~\cite[Proposition 5.1]{BGMT_2020} it was shown that every group shift satisfies the topological Markov property (in fact, every algebraic action satisfies the non-symbolic variant of the topological Markov property, see~\cite[Proposition 4.1]{Barbieri_Ramos_Li_2022}). Also, it is clear that for any sofic approximation sequence $\Sigma$ we have $h_{\Sigma}(\Gamma \curvearrowright X)\geq 0$ as $X$ contains a constant configuration, namely its identity configuration $e$ which satisfies $e(g)=1_{H}$ for every $g \in \Gamma$.
		
		Given a group shift, the \define{homoclinic group} $\Delta(X)$ of $X$ is the set of all $y \in X$ such that $(y,e)\in \cT(X)$. Is is not hard to check (see~\cite[Proposition 5.2]{BGMT_2020}) that translation-invariant Gibbs measures for $f \equiv 0$ in $X$ are precisely the translation-invariant Borel probability measures which are invariant under multiplication by any element of $\Delta(X)$. From the arguments above and~\Cref{thm:LR_etale_local} we obtain that equilibrium measures on $X$ are invariant under the action of $\Delta(X)$. 
		Using~\Cref{thm:uniquegibbsmeasure}, we have the following extension of~\cite[Corollary 5.4]{BGMT_2020} to groups shifts over sofic groups:

		\begin{theorem}\label{thm:groupshifts}
			Let $\Gamma$ be a sofic group and let $X\subset H^{\Gamma}$ be a group shift. Suppose the homoclinic group $\Delta(X)$ is dense in $X$, then the Haar measure on $X$ is the unique measure of maximal entropy with respect to every sofic approximation sequence $\Sigma$.
		\end{theorem}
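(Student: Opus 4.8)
The plan is to reduce the statement to an application of \Cref{thm:uniquegibbsmeasure} with the zero potential $f \equiv 0$, for which a measure of maximal entropy is by definition an equilibrium measure. Since every group shift satisfies the topological Markov property by \cite[Proposition 5.1]{BGMT_2020}, the structural hypothesis of \Cref{thm:uniquegibbsmeasure} holds; moreover $f\equiv 0$ is locally constant and hence trivially a $\rho_{\cT^0(X)}$-limit of locally constant functions, and $h_\Sigma(\Gamma \curvearrowright X) \ge 0$ holds for \emph{every} sofic approximation sequence because $X$ contains the identity configuration $e$. Thus the whole theorem follows once I show that the Haar measure is the \emph{unique} Gibbs measure on $X$ with respect to $f\equiv 0$.

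First I would pin down the Gibbs condition for the zero potential. By \Cref{prop:GibbsTMP} the topological Markov property gives $\cT(X) = \cT^0(X)$, and because $X$ carries a group structure this relation is exactly the orbit equivalence relation of the homoclinic group $\Delta(X)$ acting on $X$ by multiplication: indeed $(x,y)\in\cT(X)$ means that $x$ and $y$ agree off a finite set, which for a group shift is equivalent to $y x^{-1}$ being finitely supported, i.e. $yx^{-1}\in\Delta(X)$. Since $\Psi_0\equiv 0$ and $\exp(0)=1$, a Borel probability measure $\mu$ is Gibbs with respect to $f\equiv 0$ precisely when its Radon--Nikod\'ym $\cT(X)$-cocycle equals $1$ up to a $\mu$-null set, and by the Feldman--Moore description this is equivalent to $\mu$ being invariant under multiplication by every element of $\Delta(X)$. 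This is the characterization recorded in \cite[Proposition 5.2]{BGMT_2020}, which I need here for all Gibbs measures, not only the translation-invariant ones.

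Next I would establish existence and uniqueness of such a measure. The Haar measure is invariant under multiplication by every element of $X$, in particular by every element of $\Delta(X)$, so it is Gibbs for $f\equiv 0$. For uniqueness, let $\mu$ be any $\Delta(X)$-invariant probability measure. The map $y\mapsto y_*\mu$ from $X$ to $\Prob(X)$ is continuous in the weak-$*$ topology, using continuity of the group operation together with dominated convergence, so the stabilizer $\{y\in X : y_*\mu = \mu\}$ is a closed subgroup of $X$; it contains the dense subgroup $\Delta(X)$ and hence equals all of $X$. Therefore $\mu$ is invariant under multiplication by every element of $X$, which forces $\mu$ to be the Haar measure (here compactness, hence unimodularity, of $X$ ensures left- and right-invariance both single out Haar). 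This shows the Haar measure is the unique Gibbs measure with respect to $f\equiv 0$.

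Finally I would invoke \Cref{thm:uniquegibbsmeasure}: for every sofic approximation sequence $\Sigma$, all of which satisfy $h_\Sigma(\Gamma \curvearrowright X)\ge 0$, the unique Gibbs measure---the Haar measure---is translation-invariant and is the unique equilibrium measure for $f\equiv 0$, that is, the unique measure of maximal sofic entropy. The step I expect to be the main obstacle is the second paragraph, namely verifying rigorously that the Gibbs condition for $f\equiv 0$ is equivalent to $\Delta(X)$-invariance for arbitrary measures; this rests on correctly identifying $\cT(X)=\cT^0(X)$ with the orbit relation generated by the multiplications $x\mapsto\delta x$, $\delta\in\Delta(X)$, and on the fact that the Radon--Nikod\'ym cocycle is determined by invariance under a generating family of Borel bijections of the relation.
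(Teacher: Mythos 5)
Your proposal is correct and follows essentially the same route as the paper: reduce to \Cref{thm:uniquegibbsmeasure} with $f\equiv 0$, identify the Gibbs condition with $\Delta(X)$-invariance via \cite[Proposition 5.2]{BGMT_2020}, and pass from invariance under the dense subgroup $\Delta(X)$ to invariance under all of $X$, forcing Haar. Your version is somewhat more careful than the paper's, which is to your credit: you spell out the closed-stabilizer argument and you note explicitly that the $\Delta(X)$-invariance characterization must hold for arbitrary (not just translation-invariant) Gibbs measures, a point the paper's one-line citation glosses over.
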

		
		\begin{proof}
			Let $\nu$ be a Gibbs measure on $X$. By~\cite[Proposition 5.2]{BGMT_2020}, $\nu$ is invariant under multiplication by any element of $\Delta(X)$ which is dense, and thus is invariant under multiplication by any element of $X$. It follows that the unique Gibbs measure is the Haar measure. By~\Cref{thm:uniquegibbsmeasure} it follows that there is a unique measure of maximal entropy with respect to every $\Sigma$ and it coincides with the Haar measure.
		\end{proof}

		\section{Beyond sofic groups}\label{sec:beyond_sofic}
		To the knowledge of the authors, the existence of a non-sofic group is still open. Despite this fact, our main result suggests that it is reasonable to ask whether there is a meaningful version of the Lanford--Ruelle theorem that holds beyond sofic groups, perhaps for actions of all countable groups. 
		In this short final section we point out that a hypothetical consequence of such a generalization would be a positive answer to Gottschalk's conjecture stating that every injective, continuous and $\Gamma$-equivariant map on $A^{\Gamma}$ is surjective. The core of the argument is rather old, and comes from the notion of \define{intrinsic ergodicity}.
		
		To formulate a Lanford--Ruelle theorem for group actions, one needs a reasonable notion of equilibrium measure, which implicitly depends on a notion of topological pressure which satisfies a variational principle. For this theoretical application we need only use the potential $0$, and thus we shall rather speak of topological and measure-theoretic entropy.
		
		Suppose $\Gamma$ is a topological group and $\Gamma \curvearrowright (X,\mu)$ is a probability measure preserving action. A \define{measure-theoretic entropy theory} for $\Gamma$ is an assignment  \[\Gamma \curvearrowright (X,\mu) \mapsto h(\Gamma \curvearrowright X,\mu) \in [-\infty,+\infty].\] 
		which is invariant under measure-theoretic isomorphism. A \define{topological entropy theory} for $\Gamma$ is an assignment
		\[\Gamma \curvearrowright X \mapsto h(\Gamma \curvearrowright X) \in [-\infty,+\infty].\] 
		which is invariant under topological isomorphism.
		Given a measure-theoretic entropy theory, one can obtain a topological entropy theory by imposing a variational principle as follows:
		\[h(\Gamma \curvearrowright X) = \sup_{\mu \in \Prob_\Gamma(X)} h(\Gamma \curvearrowright X, \mu) .\]
		
		A topological entropy theory which satisfies a variational principle with respect to a measure-theoretic entropy theory is always monotone with respect to subsystems in the sense that for any closed (in fact, measurable) $\Gamma$-invariant set $X_0 \subset X$ we have $h(\Gamma \curvearrowright X_0) \leq h(\Gamma \curvearrowright X)$.

		A Lanford--Ruelle theorem for an entropy theory would state that for every $\Gamma$-SFT $X \subset A^\Gamma$ (hopefully, any subshift with the topological Markov property), any $\mu \in \Prob_{\Gamma}(X)$ such that $h(\Gamma \curvearrowright X) = h(\Gamma \curvearrowright X,\mu)$ would necessarily be a  Gibbs state for $f =0$. Then uniqueness of the Gibbs measure on $A^{\Gamma}$ for $f=0$ would imply that the uniform Bernoulli measure is the unique measure of maximal entropy for $A^{\Gamma}$. As this measure is not supported in any strict subset of $A^{\Gamma}$, it follows that any strict subsystem $X_0$ of $X$ has strictly lower topological entropy. This precludes the possibility of equivariantly and continuously embedding  $A^{\Gamma}$ into a proper subsystem, thus settling Gottschalk's conjecture for the group $\Gamma$.
		
		One available measure-theoretic entropy which works in any group is \define{Rokhlin entropy}~\cite{Seward2019a}. The Rokhlin entropy of a free, ergodic, probability measure preserving action $\Gamma \curvearrowright (X,\mu)$ is given by 
		\[
		h_R(\Gamma \curvearrowright X, \mu) := \inf_{\mathcal{P}}H_\mu(\mathcal{P}),
		\]
		where $\mathcal{P}$ runs over all countable generating partitions of $\Gamma \curvearrowright X$, and $H_\mu(\mathcal{P})$ is the Shannon entropy of the partition.

		It is obvious from the definitions that $h_R(\Gamma \curvearrowright X, \mu)$ is invariant under measure-theoretic isomorphism. As a consequence, for  any closed (in fact, measurable) $\Gamma$-invariant set $X_0 \subset X$ and any $\mu \in \Prob_\Gamma(X)$ satisfying $\mu(X_0)=1$ we have {$h_R(\Gamma \curvearrowright X_0, \mu|_{X_0})= h_R(\Gamma \curvearrowright X, \mu)$, since $(\Gamma \curvearrowright X_0, \mu|_{X_0})$ is measure-theoretically isomorphic to $(\Gamma \curvearrowright X, \mu)$}. 
		
		In the case $\Gamma$ is a countable amenable group,  Rokhlin entropy coincides with the usual notion for free ergodic actions. For free actions of sofic groups, Rokhlin entropy dominates the sofic entropy with respect to any sofic approximation sequence of $\Gamma$. 
		\begin{question}
			Does Rokhlin entropy satisfy a Lanford--Ruelle theorem for every group $\Gamma$?
		\end{question}
		
		It seems the answer to the question above might not be easy because any group which satisfies a Lanford--Ruelle theorem for Rokhlin entropy must also admit an essentially free ergodic action with finite nonzero Rokhlin entropy, which is an open question. For a group $\Gamma$, one can define $h_R^{\tiny{sup}}(\Gamma)$ as the supremum over all values of $h_R$ on essentially free ergodic actions with finite Rokhlin entropy. It can be proven (see~\cite{Seward2019b}) that if there is a group $\Gamma$ for which $h_R^{\tiny{sup}}(\Gamma)<+\infty$ then for every group $\Gamma'$ which is the direct product of $\Gamma$ with an infinite and locally finite group we have $h_R^{\tiny{sup}}(\Gamma')=0$. This means that for $\Gamma'$ every essentially free ergodic action has Rokhlin entropy zero, thus no Lanford--Ruelle theorem for Rokhlin entropy can hold on $\Gamma'$. On the other hand, having $h_R^{\tiny{sup}}(\Gamma)=+\infty$ for every group $\Gamma$ already is enough to settle Gottschalk's conjecture~\cite{Seward2019b} (see also~\cite{Bowen_2020}).

		\Addresses
		
		\bibliographystyle{abbrv}
		\bibliography{ref}

	\end{document}